\newtheorem*{rep@theorem}{\rep@title}
\newcommand{\newreptheorem}[2]{%
	\newenvironment{rep#1}[1]{%
		\def\rep@title{#2 \ref{##1}}%
		\begin{rep@theorem}}%
		{\end{rep@theorem}}}
\newtheorem{theorem}{Theorem}[section]
\newtheorem{lemma}[theorem]{Lemma}
\newtheorem{corollary}[theorem]{Corollary}
\theoremstyle{definition}
\newtheorem{remark}[theorem]{Remark}
\newtheorem{definition}[theorem]{Definition}
\definecolor{ao}{rgb}{0.0, 0.5, 0.0}
\definecolor{lasallegreen}{rgb}{0.03, 0.47, 0.19}
\definecolor{ForestGreen}{rgb}{0.1,0.6,0.05}
\definecolor{EgyptBlue}{rgb}{0.063,0.1,0.6}
\let\oldnorm\norm
\def\norm{\@ifstar{\oldnorm}{\oldnorm*}}
\newcommand{\al} {\alpha}
\newcommand{\pa} {\partial}
\newcommand{\de} {\delta}
\newcommand{\Om} {\Omega}
\newcommand{\la} {\lambda}
\newcommand{\si} {\sigma}
\newcommand{\dx}{{\,\rm d}x}
\newcommand{\dt}{{\,\rm d}t}
\newcommand{\ds}{{\,\rm d}s}
\newcommand{\dS}{{\,\rm dS}}
\newcommand{\da}{{\,\rm d}\al}
\newcommand{\dd}{{\,\rm d}\de}
\newcommand{\dsi}{{\,\rm d}\si}
\newcommand{\drh}{{\,\rm d}\rho}
\newcommand{\R}{{\mathbb R}}
\newcommand{\Omo}{\Om_{\rm{out}}}
\newcommand{\Omi}{\Om_{\rm{in}}}
\newcommand{\Go}{G_{\rm{out}}}
\newcommand{\Gi}{G_{\rm{in}}}
\newcommand{\ho}{h_{\rm{out}}}
\newcommand{\hi}{h_{\rm{in}}}
\def\dx{{\,\rm d}x}
\def\dt{{\,\rm d}t}
\def\A{{\mathscr A}}
\def\l@subsection{\@tocline{2}{0pt}{2pc}{6pc}{}}
\def\l@subsubsection{\@tocline{3}{0pt}{8pc}{8pc}{}}
\numberwithin{equation}{section}
\setlist{nosep}
\begin{document}
\singlespacing
\title
{Reverse Faber-Krahn inequality for planar doubly connected domains}

\author[T.V.~Anoop]{T.V.~Anoop}
\author[V.~Bobkov]{Vladimir Bobkov}
\author[M.~Ghosh]{Mrityunjoy Ghosh}

\address[T.V.~Anoop]{\newline\indent
	Department of Mathematics,
	Indian Institute of Technology Madras, 
	\newline\indent
	Chennai 36, India
	\newline\indent
	\orcidlink{0000-0002-2470-9140} 0000-0002-2470-9140 
}
\email{anoop@iitm.ac.in}

\address[V.~Bobkov]{\newline\indent
	Institute of Mathematics, Ufa Federal Research Centre, RAS,
	\newline\indent 
	Chernyshevsky str. 112, 450008 Ufa, Russia
	\newline\indent 
	Ufa University of Science and Technology,
 \newline\indent
 Zaki Validi str. 32, 450076 Ufa, Russia
  \newline\indent
 \orcidlink{0000-0002-4425-0218} 0000-0002-4425-0218
}
\email{bobkov@matem.anrb.ru}

\address[M.~Ghosh]{\newline\indent
	Tata Institute of Fundamental Research,
 \newline\indent
	Centre for Applicable Mathematics, 
	\newline\indent
	Sharadanagar, Bengaluru 560065, India
	\newline\indent
	\orcidlink{0000-0003-0415-2821} 0000-0003-0415-2821 
}
\email{ghoshmrityunjoy22@gmail.com}

\subjclass[2020]{
    35P05, 
    26E05,  
    35P15.	
}
\keywords{eigenvalues, Robin boundary condition, negative Robin parameter, reverse Faber-Krahn inequality, effectless cut, gradient flow}

\begin{abstract}
We prove that among all doubly connected and elastically supported planar membranes $\Om$ with prescribed values of the area $|\Omega|$ and the lengths of the inner and outer boundaries $|\pa\Omi|_1$, $|\pa\Omo|_1$ satisfying $|\pa\Omo|_1^2 - |\pa\Omi|_1^2 = 4\pi |\Omega|$, the concentric annular membrane has the maximal fundamental frequency. 
The elastic constants $\hi$, $\ho$ on $\pa\Omi$, $\pa\Omo$, respectively, are assumed to satisfy $\hi \cdot \ho \geq 0$ and can admit negative values and $+\infty$, the latter being understood as a fixation of the membrane on the corresponding part of the boundary. 
Our study extends and unifies several existing results in the literature. 
The case $\hi \cdot \ho = 0$ is proved using the method of interior parallels \`a la \textsc{Payne \& Weinberger} \cite{payne}, and it requires less restrictive assumptions on $\Omega$.
For the case $\hi \cdot \ho > 0$, we develop the construction of the so-called ``effectless cut'' of $\Omega$ described in terms of the gradient flow of the first eigenfunction. This concept was originally introduced by \textsc{Weinberger} \cite{weineffect} and used by \textsc{Hersch} \cite{hers} in the fixed boundary case, whose arguments we also revise. 
\end{abstract} 

\maketitle

\begin{quote}
	\setcounter{tocdepth}{2}
	\tableofcontents
	\addtocontents{toc}{\vspace*{0ex}}
\end{quote}

\section{Introduction}\label{sec:intro}

Let $\Om\subset \R^2$ be a bounded domain of the form 
$\Om =\Omo\setminus \overline{\Omi}$,
where $\Omi$ and $\Omo$ are simply connected open sets such that $\overline{\Omi}\subset \Omo$. 
We always assume that $\pa \Om$ is of class $C^{1,1}$ and $\Omi$ is nonempty, unless otherwise  explicitly mentioned.  
For $\hi,\ho\in \R\cup\{+\infty\}$ we consider the following eigenvalue problem for the Robin Laplacian in $\Om$:
\begin{align}\tag{$\mathcal{RR}$}\label{cutproblem2d}
\left\{\begin{aligned}
    -\Delta u &= \lambda u \quad\text{in} \quad\Omega,\\
		\frac{\pa u}{\pa \nu}+ \hi u&=0 \quad\;\;\text{on}\quad  \pa \Omi,\\
		\frac{\pa u}{\pa \nu}+ \ho u &=0 \quad\;\; \text{on}\quad  \pa \Omo,
	\end{aligned}
	 \right.
\end{align}
where $\lambda \in \mathbb{R}$ and $\nu$ denotes the unit outward normal vector to $\pa \Om$. 
In what follows, the case $\hi=0$ or $\ho = 0$ is referred to as the Neumann boundary condition $\partial u/\partial \nu=0$, and  the case $\hi = +\infty$ or $\ho = +\infty$ as the Dirichlet boundary condition $u=0$, imposed on the respective part of $\partial \Omega$. 
Throughout the text, we denote by $B_r$ the open disk of radius $r>0$ centered at the origin, and the notation $\A_{r,R}$ is reserved for the (concentric) annulus:
$$
\A_{r,R}:=B_R\setminus\overline{B_r}
\quad \text{for}~ R>r>0.
$$ 
We will also denote by $|\cdot|$ and $|\cdot|_1$ the area of an open subset of $\mathbb{R}^2$ and the length of a curve, respectively. 

It follows from the general theory of compact self-adjoint operators (see, e.g., an exposition in \cite[Section~4.2]{BFK} on a closely related problem) that the spectrum of \eqref{cutproblem2d} is discrete and accumulates at infinity. 
In the present work, we are interested in inequalities of isoperimetric type for the \textit{first} eigenvalue of \eqref{cutproblem2d} under various mutual relations between the Robin parameters $\hi$ and $\ho$. 
The first eigenvalue can be characterized as
\begin{equation}\label{eq:lambda1}
\lambda_1(\Omega) 
=
\inf_{v \in \tilde{H}^1(\Omega)\setminus\{0\}}
\frac{\int_{\Om}|\nabla v|^2\dx+\hi \int_{\pa \Omi} v^2\dsi+\ho \int_{\pa \Omo} v^2\dsi}{\int_{\Om} v^2\dx},
\end{equation}
where
 $$
 \tilde{H}^1(\Omega):=\left\{u\in H^1(\Omega): u=0 \text{ on } \partial \Omi \text{ if } \hi=+\infty,~ u=0 \text{ on } \partial \Omo \text{ if } \ho=+\infty \right\}.
 $$
In particular, if $\hi,\ho=+\infty$, then $\tilde{H}^1(\Omega) = H_0^1(\Omega)$. 
By standard arguments based on the characterization \eqref{eq:lambda1} and the maximum principle, the first eigenvalue is simple and the corresponding eigenfunction does not vanish in $\Omega$. 
Moreover, any eigenfunction is (real) analytic in $\Omega$ and belongs to $C^1(\overline{\Omega})$, see \cite[Proposition~2.1]{hassannezhad2024pleijel} and \cite[Section~4.2]{BFK} for explicit references and discussion.
We also provide a remark on the sign of $\lambda_1(\Omega)$.
\begin{remark}\label{rem:sign}
    For $(\hi,\ho) \neq (0,0)$, it is not hard to see that the following assertions hold:
	\begin{enumerate}[label={\rm(\roman*)}]
		\item If $\hi,\ho \in [0,+\infty]$, then $\lambda_1(\Om) > 0$.
		\item If $\hi,\ho \in (-\infty, 0]$, then $\lambda_1(\Om) < 0$.
	\end{enumerate}
\end{remark}

Throughout the text, for the convenience of exposition, we adopt the following convention:
we write the first eigenvalue of a doubly connected domain $\Omega$ with two superscript letters among  `$\mathcal{R}$', `$\mathcal{N}$', `$\mathcal{D}$' (corresponding to $\mathcal{R}$obin, $\mathcal{N}$eumann, $\mathcal{D}$irichlet, respectively), where the first letter refers to the boundary condition imposed on the \textit{inner} boundary $\pa \Omi$, and the second letter refers to the boundary condition imposed on the \textit{outer} boundary $\pa \Omo$. 
In particular, we use the notation
$$
\lambda_1^{\mathcal{RR}}(\Om),~ 
\lambda_1^{\mathcal{NR}}(\Om),~ 
\lambda_1^{\mathcal{RN}}(\Om),~
\lambda_1^{\mathcal{NN}}(\Om),~
\lambda_1^{\mathcal{DN}}(\Om),~
\lambda_1^{\mathcal{ND}}(\Om),~
\lambda_1^{\mathcal{DD}}(\Om).
$$
The only exception in the text is the next paragraph, where we overview the case $h:=\hi=\ho$ and allow $\Omi$ to be empty, 
and write only one superscript: $\lambda_1^{\mathcal{R}}(\Om)$, $\lambda_1^{\mathcal{N}}(\Om)$, $\lambda_1^{\mathcal{D}}(\Om)$.

The classical Faber-Krahn inequality, due to \textsc{Faber} \cite{faber1923beweis} and \textsc{Krahn} \cite{krahn1925rayleigh}, corresponds to the Dirichlet boundary condition and states that 
\begin{equation}\label{eq:FKD}
\lambda_1^{\mathcal{D}}(B_R) \leq \lambda_1^{\mathcal{D}}(\Om), 
\end{equation}
where $R>0$ is such that $|B_R| = |\Omega|$. 
This result is valid regardless of the topology of $\Omega$.
It was proved by \textsc{Bossel} \cite{bossel1986membranes} and \textsc{Daners} \cite{daners2006faber} 
that the Faber-Krahn inequality is true also in the positive Robin case $h \in (0,+\infty)$: 
\begin{equation}\label{eq:FKRp}
\lambda_1^{\mathcal{R}}(B_R) \leq \lambda_1^{\mathcal{R}}(\Om).
\end{equation}
Evidently, the Neumann case $h=0$ is out of interest since $\lambda_1^{\mathcal{N}}(\Omega) = 0$ for any $\Omega$. 
Things become more complicated in the negative Robin case $h \in (-\infty,0)$ and still remain obscure in the full generality. 
In this direction, \textsc{Freitas \& Krej\v{c}i\v{r}\'ik} \cite{freitas2015first} proved the existence of $h^*=h^*(|\Omega|) < 0$ such that 
\begin{equation}\label{eq:FKRn}
	\lambda_1^{\mathcal{R}}(\Omega) \leq \lambda_1^{\mathcal{R}}(B_R) \quad \text{for any}~ h \in [h^*,0),
\end{equation}
while they showed that this inequality is reversed when $\Omega$ is an annulus and the Robin parameter is a sufficiently large negative number. 

The bounds \eqref{eq:FKD}, \eqref{eq:FKRp}, \eqref{eq:FKRn} are not sensitive to the topology of $\Omega$. 
However, the topology of $\Omega$ comes into play if one is interested in \textit{reverse} inequalities of Faber-Krahn type. 
Namely, in particular cases, the first eigenvalue of a doubly connected domain $\Omega$ can be compared to the first eigenvalue of a certain \textit{annulus}. 
Several such results are known in the literature and we overview them below, see Table~\ref{tab1} for the summary. 

In the Neumann-Dirichlet and Neumann-positive Robin cases, i.e., $\hi=0$ and $\ho \in (0,+\infty]$, \textsc{Payne \& Weinberger} \cite[Sections~II, III]{payne} obtained the upper bound 
	\begin{equation}\label{eq:RFKRN}
		\la_1^{\mathcal{NR}}(\Om)\leq \la_1^{\mathcal{NR}}({\A_{r,R}}),
	\end{equation}
    where $R>r$ satisfy 
	\begin{equation}\label{eq:RFKRN:as}
	|\Om|=|\A_{r,R}| 
	\quad \text{and} \quad 
	|\pa \Omo|_1=|\pa B_R|_1.
	\end{equation}
    Here, $\Omi$ is allowed to be empty, that is, there is only the Robin boundary condition on $\pa\Om$.   
    In the case $\Omi = \emptyset$ and without assuming that $\Omo$ is simply-connected, the inequality \eqref{eq:RFKRN} was obtained by \textsc{Freitas \& Krej{\v{c}}i{\v{r}}{\'\i}k} in \cite[Theorem~4]{freitas2015first} for the negative Robin parameter. 
    The inequality \eqref{eq:RFKRN} was extended by \textsc{Paoli, Piscitelli, \& Trani} in \cite[Theorem~3.1]{PaoliPiscitelli} to the Robin parameter of \textit{any sign}, the general higher-dimensional case, and the $p$-Laplace operator, assuming that $\Omo$ is convex. 
    We also refer to \cite[Theorem~1.2]{AnoopMrityunjoy} and \cite{cito2024stability} for closely related results. 

In the Dirichlet-Neumann case, i.e., $\hi = +\infty$ and $\ho = 0$, it was proved by \textsc{Hersch} \cite[Section~5]{hersch-contrib} that
    \begin{equation}\label{eq:RFKND}
		\la_1^{\mathcal{DN}}(\Om)\leq \la_1^{\mathcal{DN}}({\A_{r,R}}),
	\end{equation}
     where $R>r$ satisfy 
	\begin{equation}\label{eq:RFKND:as}
	|\Om|=|\A_{r,R}| 
	\quad \text{and} \quad 
	|\pa \Omi|_1=|\pa B_r|_1.
	\end{equation}
    This result was extended by \textsc{Della Pietra \& Piscitelli} in \cite[Theorem~1.1]{DellaPiscitelli} to the positive Robin-Neumann boundary conditions, i.e., $\hi \in (0,+\infty)$ and $\ho = 0$, the general higher-dimensional case, and the $p$-Laplace operator, assuming that $\Omi$ is convex. 
    Note that, in view of the continuity of $\la_1^{\mathcal{RN}}(\Om)$ with respect to $\hi$ (see \cite[Proposition 2.3]{DellaPiscitelli}), the result of \cite[Theorem~1.1]{DellaPiscitelli} remains valid also for the Dirichlet-Neumann boundary conditions. 
    The latter case is also considered in \cite[Theorem~1.6]{AnoopMrityunjoy}. 
    Moreover, we refer to \cite[Theorem~1.1]{ABD2023} for the Robin-Neumann case with \textit{any sign} of the Robin parameter, under the assumption that $\Omi$ is a disk. 
        
In the purely Dirichlet case $\hi,\ho=+\infty$, \textsc{Hersch} \cite[Section~3]{hers} established the inequality 
	\begin{equation}\label{eq:hersch}
		\la_1^{\mathcal{DD}}(\Om)\leq \la_1^{\mathcal{DD}}({\A_{r,R}})
	\end{equation}
    for the class of all $\Omega$ with prescribed values of $|\Omega|$, $|\pa\Omi|_1$, $|\pa\Omo|_1$ satisfying 
    \begin{equation}\label{eq:hersc:as0}
    |\pa\Omo|_1^2 - |\pa\Omi|_1^2 = 4\pi |\Omega|,
    \end{equation}
    and $R>r$ are such that the annulus $\A_{r,R}$ belongs to this class. 
    Equivalently, \eqref{eq:hersch} holds for any $\Omega$ for which there exist $R>r$ such that
	\begin{equation}\label{eq:hersc:as}
	|\Om|=|\A_{r,R}|,
	\quad
	|\pa \Omi|_1=|\pa B_r|_1,
	\quad
	|\pa \Omo|_1=|\pa B_R|_1.
 	\end{equation}
    Let us note that the arguments of \textsc{Hersch} are based on a result of  \textsc{Weinberger} \cite{weineffect}, but its usage lacks rigor in one important aspect, see a discussion after the statement of Theorem~\ref{thm:RR} below.   
    Recently, the inequality \eqref{eq:hersch} has been claimed by \textsc{Gavitone \& Piscitelli} in \cite[Theorem~1.2]{GP} for the Dirichlet-positive Robin boundary conditions, i.e., $\hi=+\infty$ and $\ho \in (0,+\infty)$, in the general higher-dimensional case, and assuming that $\Omi$, $\Omo$ are convex. 
    However, the constructed test function in the proof of \cite[Theorem~1.2]{GP} can be essentially discontinuous and hence it does not belong to $\tilde{H}^1(\Omega)$, in general. 
    It is also worth mentioning that the inequality of the type \eqref{eq:hersch} was obtained by \textsc{Krej{\v{c}}i{\v{r}}{\'\i}k \& Lotoreichik} \cite{krejvcivrik2024optimisation,krejvcivrik2020optimisation2} in the exterior domain case, that is, $\Omo = \mathbb{R}^2$ and $\Omi$ is bounded, under the negative Robin boundary condition on $\pa\Omi$. 
    
\begin{center}
\begin{table}[ht]
\begin{tabular}{ |c|| c| c| c | c|}
\hline
 \diagbox[]{$\hi$}{$\ho$} & \makecell{$\ho<0$ \\ (negative Robin)} & \makecell{$\ho=0$ \\ (Neumann)}& \makecell{$\ho>0$ \\ (positive Robin)} & \makecell{$\ho=+\infty$\\ (Dirichlet)} \\ 
 \hline
 \hline
 \makecell{$\hi<0$ \\ (negative Robin)} & Theorem~\ref{thm:RR} & \makecell{\cite[Theorem~1.1]{ABD2023}\\ ($\Omi$ disk),\\ Theorem~\ref{thm:NR1}} & ? & \makecell{\cite{krejvcivrik2024optimisation,krejvcivrik2020optimisation2}\\ ($\Omo = \mathbb{R}^2$ and \\ $\Omi$ bounded)}\\  
 \hline
 \makecell{$\hi=0$ \\ (Neumann)} & \makecell{\cite[Theorem 3.1]{PaoliPiscitelli}\\ ($\Omo$ convex), \\ \cite[Theorem~4]{freitas2015first} \\ ($\Omi$ empty), \\Theorem~\ref{thm:RN1}} & trivial & \cite[Section III]{payne} & \cite[Section II]{payne}\\
 \hline
 \makecell{$\hi>0$ \\ (positive Robin)} & ? & \makecell{\cite[Theorem 1.1]{DellaPiscitelli}\\($\Omi$ convex), \\Theorem~\ref{thm:NR1}} & Theorem~\ref{thm:RR} & Theorem~\ref{thm:RR} \\
 \hline
 \makecell{$\hi=+\infty$\\ (Dirichlet)} & ? & 
 \cite[Section~5]{hersch-contrib} & 
 Theorem~\ref{thm:RR} & \makecell{\cite[Section~3]{hers}, \\ Theorem~\ref{thm:RR}} \\
 \hline
\end{tabular}
\caption{Table of results on the inequality $\lambda_1(\Omega) \leq \lambda_1(\A_{r,R})$ in the planar case. 
We recall that $\Omo$ is bounded and $\Omi$ is nonempty, unless otherwise explicitly mentioned. The question mark indicates that the corresponding inequality is unknown (to the best of our knowledge).} 
\label{tab1}
\end{table}
\end{center}

\subsection{Main result}
The above overview indicates that the available results on 
the reverse Faber-Krahn inequalities (RFK inequalities, for short) of the type \eqref{eq:RFKRN}, \eqref{eq:RFKND}, \eqref{eq:hersch}  are scattered in the literature, cover only particular cases of parameters, sometimes impose geometric restrictions on $\Omega$, and some of the arguments contain imprecisions. 

The aim of the present work is to provide a unified result with respect to all possible values of the Robin parameters $\hi,\ho$ such that $\hi \cdot \ho \in [0,+\infty]$.
\begin{theorem}[RFK inequality]\label{thm:RR}
    Let $\hi,\ho\in \R\cup \{+\infty\}$, $(\hi,\ho) \neq (0,0)$, and $\hi \cdot \ho \geq 0$.  
    Let $R>r>0$ be such that  
    \begin{equation}\label{eq:thm:RR:as}
    |\Om|=|\A_{r,R}|,
    \quad 
    \hi \, |\pa \Omi|_1 = \hi \, |\pa B_r|_1,
    \quad
    \ho \, |\pa \Omo|_1 = \ho \, |\pa B_R|_1.
    \end{equation}
    Then 
    \begin{equation}
        \la_1^{\mathcal{RR}}(\Om)\leq \la_1^{\mathcal{RR}}(\A_{r,R}).
    \end{equation}
\end{theorem}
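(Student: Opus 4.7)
The plan is to split the proof into two cases based on the sign of $\hi\cdot\ho$, using quite different techniques.

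\textbf{Case 1} ($\hi\cdot\ho=0$). Without loss of generality assume $\hi=0$. I would follow the classical method of interior parallels \`a la Payne--Weinberger. Let $u>0$ be the first eigenfunction on $\Om$, and set $\mu(t):=|\{u>t\}|$. Using the coarea formula, the planar isoperimetric inequality, and the prescribed length $|\pa\Omo|_1=2\pi R$, I would construct a radial test function $v$ on $\A_{r,R}$ by equidistribution $|\{v>t\}|=\mu(t)$. A P\'olya--Szeg\H{o}-type comparison then yields $\int_{\A_{r,R}}|\Gr v|^2\leq\int_\Om|\Gr u|^2$ together with a companion bound for the Robin boundary term on $\pa B_R$ versus $\pa\Omo$; substituting $v$ into the Rayleigh characterization~\eqref{eq:lambda1} on $\A_{r,R}$ closes the case.

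\textbf{Case 2} ($\hi\cdot\ho>0$). I would implement an \emph{effectless cut} in the spirit of Weinberger, reducing the problem to two applications of Case~1. Let $u$ be the first eigenfunction on $\Om$, of constant sign. Using the gradient flow of $u$, construct a simple closed curve $\ga\subset\overline{\Om}$ encircling $\pa\Omi$ along which $\pa u/\pa\nu_\ga\equiv 0$; intuitively, $\ga$ is the watershed between gradient trajectories limiting on $\pa\Omi$ and those limiting on $\pa\Omo$. Splitting $\Om$ along $\ga$ into $\Om_1\supset\pa\Omi$ and $\Om_2\supset\pa\Omo$, each restriction $u|_{\Om_i}$ is of constant sign and solves the mixed Robin--Neumann (resp.\ Neumann--Robin) eigenvalue problem on $\Om_i$; being positive it is the first eigenfunction, so
\[
\la_1^{\mathcal{RR}}(\Om)=\la_1^{\mathcal{RN}}(\Om_1)=\la_1^{\mathcal{NR}}(\Om_2).
\]
Fix $\rho\in(r,R)$ by the area balance $|\Om_1|=\pi(\rho^2-r^2)$, so that $|\Om_2|=\pi(R^2-\rho^2)$ as well. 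The hypotheses of Case~1 are then met on each piece (the lengths of the Robin portions, $|\pa\Omi|_1=2\pi r$ and $|\pa\Omo|_1=2\pi R$, are prescribed), which gives
\[
\la_1^{\mathcal{RR}}(\Om)\leq\la_1^{\mathcal{RN}}(\A_{r,\rho})=:f(\rho)\quad\text{and}\quad\la_1^{\mathcal{RR}}(\Om)\leq\la_1^{\mathcal{NR}}(\A_{\rho,R})=:g(\rho).
\]
To close the argument, let $\rho^*\in(r,R)$ be the critical radius of the radial first eigenfunction of the $\mathcal{RR}$ problem on $\A_{r,R}$ (its existence follows from $\hi\cdot\ho>0$, which forces a sign change of the radial derivative between $\pa B_r$ and $\pa B_R$ in both subcases $\hi,\ho>0$ and $\hi,\ho<0$); then $f(\rho^*)=g(\rho^*)=\la_1^{\mathcal{RR}}(\A_{r,R})$. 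A trivial Neumann extension of test functions shows that $f$ and $g$ are monotone in $\rho$ in opposite directions. Combined with the common value at $\rho^*$ this yields $\min(f(\rho),g(\rho))\leq\la_1^{\mathcal{RR}}(\A_{r,R})$ for every $\rho\in(r,R)$, which is precisely what the two displayed inequalities require in order to conclude $\la_1^{\mathcal{RR}}(\Om)\leq\la_1^{\mathcal{RR}}(\A_{r,R})$.

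\textbf{Main obstacle.} The heart of the proof is the rigorous construction of the effectless cut $\ga$ in Case~2: showing that the gradient dynamics of $u$ in a general doubly connected $\Om$ produces a single simple closed separating curve on which $\pa u/\pa\nu_\ga\equiv 0$, coping with possibly degenerate critical configurations of $u$, and verifying enough regularity of $\ga$ that each $u|_{\Om_i}$ is a bona fide first eigenfunction of the mixed problem on $\Om_i$. This is precisely the step at which, as the authors note, Hersch's original Dirichlet argument lacks rigor, and where the main new analytic input of the paper is expected to lie.
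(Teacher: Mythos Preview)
Your Case~2 strategy matches the paper's approach closely: the effectless cut via the gradient flow of the $\Omega$-eigenfunction, the equalities $\la_1^{\mathcal{RR}}(\Om)=\la_1^{\mathcal{RN}}(\Om_1)=\la_1^{\mathcal{NR}}(\Om_2)$, the area-balanced radius $\rho$, and the max--min characterization of $\la_1^{\mathcal{RR}}(\A_{r,R})$ are exactly what the paper uses (Section~\ref{sec:proof-RR}, Lemmas~\ref{lem:mono_RR1}--\ref{lem:character}). Your diagnosis of the main obstacle is also correct. One small caveat: the ``trivial Neumann extension'' gives the monotonicity of $f,g$ only when $\hi,\ho>0$; for $\hi,\ho<0$ the eigenvalues are negative and the extension argument fails --- the paper instead restricts the eigenfunction of the larger annulus to the smaller one and uses the sign of its radial derivative (Lemma~\ref{lem:domain-monot}~\ref{lem:domain-monot:2}).

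Your Case~1, however, is set up in the wrong direction. You take the first eigenfunction $u$ on $\Omega$, symmetrize it to a radial $v$ on $\A_{r,R}$, and claim a P\'olya--Szeg\H{o} inequality $\int_{\A_{r,R}}|\nabla v|^2\leq\int_\Om|\nabla u|^2$. Even if that inequality held (and there is no standard symmetrization to an \emph{annulus} that delivers it), substituting $v$ into the Rayleigh quotient on $\A_{r,R}$ would yield $\la_1^{\mathcal{NR}}(\A_{r,R})\leq\la_1^{\mathcal{NR}}(\Omega)$ --- the opposite of what you need. The interior-parallels method of Payne--Weinberger goes the other way: one takes the radial first eigenfunction $u$ on the \emph{annulus} and transplants it to $\Omega$ via $v(x)=\phi(d(x,\pa\Omo))$ (or $d(x,\pa\Omi)$ in the $\mathcal{RN}$ case), where $\phi$ encodes the radial profile of $u$. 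Nagy's inequalities on the lengths of parallel sets (Lemma~\ref{lem:Nagy}) then give the correct comparisons between the integrals on $\Omega$ and on $\A_{r,R}$, leading to $\la_1^{\mathcal{NR}}(\Omega)\leq\la_1^{\mathcal{NR}}(\A_{r,R})$; see Theorems~\ref{thm:NR1}--\ref{thm:RN1}. Since your Case~2 relies on Case~1 applied to the two pieces $\Om_1,\Om_2$, this error propagates and the proof as written does not close.
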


\begin{remark}
In \eqref{eq:thm:RR:as}, the multiplication by $\hi$ and $\ho$ effectively means that, in the case $\hi \cdot \ho = 0$, the length of the Neumann boundary component is not constrained, leading to either \eqref{eq:RFKRN:as} or \eqref{eq:RFKND:as}. 
In the case $\hi \cdot \ho > 0$, the assumption \eqref{eq:thm:RR:as} is equivalent to \eqref{eq:hersc:as}. 
\end{remark}

The proof of Theorem~\ref{thm:RR} is decomposed in two cases: $\hi \cdot \ho = 0$ and $\hi \cdot \ho > 0$.

In the case $\hi \cdot \ho = 0$, we state and prove the corresponding RFK inequalities separately in Theorems~\ref{thm:NR1} and \ref{thm:RN1}, under  slightly more general assumptions on $\Omega$. 
These theorems also serve as important ingredients in the proof of Theorem~\ref{thm:RR} in the case $\hi \cdot \ho > 0$. 
Our arguments for Theorems~\ref{thm:NR1} and \ref{thm:RN1} are based on the method of interior parallels and inspired by \cite{hers,payne}. 
This approach provides an elegant way to construct a test function for the variational characterization \eqref{eq:lambda1} of $\lambda_1^{\mathcal{RR}}(\Omega)$
whose level sets are essentially the parallel sets to the boundary (see Section~\ref{sec:proof_RN} for more details). 

    In the case $\hi \cdot \ho > 0$, we have either two negative Robin boundary conditions, or two positive Robin boundary conditions, or mixed positive Robin and Dirichlet boundary conditions, or the purely Dirichlet boundary condition. 
    We recall that in the latter case the result of Theorem~\ref{thm:RR} is essentially due to \textsc{Hersch} \cite{hers} and \textsc{Weinberger} \cite{weineffect}. 
    More precisely, for the first eigenfunction $u$, \textsc{Weinberger} introduced a set $\Gi \subset \Omega$ such that each trajectory of the gradient flow of $u$ (a \textit{flow line}) starting in $\Gi$ terminates on $\pa\Omi$ in finite time (see Section~\ref{sec:proof-RR} for details). 
    It is then shown that $\Gi$ is open, the part of its boundary $\widetilde{\gamma} = \pa \Gi \cap \Omega$ (called the \textit{effectless cut}) can be represented as a finite union of flow lines, arcs of critical points, and isolated critical points, and $\pa u/\pa \nu = 0$ holds almost everywhere on $\widetilde{\gamma}$. 
    Here we note that we find some of the arguments in \cite{weineffect} lacking details. 
    In particular, it is tempting to think that, after removing isolated points from $\widetilde{\gamma}$ (which was done in \cite{weineffect}), $\widetilde{\gamma}$ becomes a Jordan curve. 
   However, even the connectedness of $\widetilde{\gamma}$ is not immediate.

    If we \textit{assume} that $\widetilde{\gamma}$ is a sufficiently regular Jordan curve surrounding the inner boundary $\pa\Omi$, then the restriction of the first eigenfunction $u$ to the disjoint subdomains $\Gi$ and $\Go := \Omega\setminus \overline{\Gi}$ becomes the first eigenfunction with the Dirichlet-Neumann and Neumann-Dirichlet boundary conditions, respectively, where the Neumann boundary condition is placed on $\widetilde{\gamma}$ and the Dirichlet boundary condition is placed on $\pa\Omi$, $\pa\Omo$.
    Moreover, these first eigenfunctions correspond to the eigenvalues $\lambda_1^{\mathcal{DN}}(\Gi)$ and $\lambda_1^{\mathcal{ND}}(\Go)$ satisfying
    \begin{equation}\label{eq:llleq}
    \lambda_1^\mathcal{DD}(\Omega)
    =
    \lambda_1^{\mathcal{DN}}(\Gi)
    \quad \text{and} \quad 
    \lambda_1^\mathcal{DD}(\Omega)
    =
    \lambda_1^{\mathcal{ND}}(\Go).
    \end{equation}
    Then, by \cite[Section II]{payne} and \cite[Section~5]{hersch-contrib} (see also Theorem~\ref{thm:NR1}, \ref{thm:RN1}), a number $\sigma \in (r,R)$ can be found such that
    \begin{equation}\label{eq:llleq2}
	\lambda_1^{\mathcal{DN}}(\Gi)
	\leq 
	\lambda_1^{\mathcal{DN}}(\A_{r,\sigma})
    \quad \text{and} \quad 
    \lambda_1^{\mathcal{ND}}(\Go) 
    \leq 
    \lambda_1^{\mathcal{ND}}(\A_{\sigma,R}).
    \end{equation}
    Using an appropriate characterization of $\lambda_1^\mathcal{DD}(\A_{\sigma,R})$ (cf.\ Lemma~\ref{lem:character}), a combination of \eqref{eq:llleq} and \eqref{eq:llleq2} eventually leads to the desired upper bound $\la_1^{\mathcal{DD}}(\Om)\leq \la_1^{\mathcal{DD}}({\A_{r,R}})$ under the constraints \eqref{eq:hersc:as}, see \cite{hers}. 
 
    However, in general, $\widetilde{\gamma}$ might contain cusps and ``cracks'' (that is, segments that have $\Gi$ on both sides). As a result, $\widetilde{\gamma}$ might lack the regularity required to directly perform the integration by parts over $\Gi$ and to ensure the compactness of the embedding of $\tilde{H}^1(\Gi)$  to $L^2(\Gi)$ in a straightforward manner, as the standard ``rooms and passages'' example indicates (see, e.g., \cite{fraenkel1979regularity}). 
    In fact, in a very close context, \textsc{Band, Cox, \& Egger} \cite{band2020defining} proved related results whenever $u$ is a Morse function. But in the present settings we do not assume the non-degeneracy of critical points and impose more general boundary conditions, and hence we cannot directly use the results from \cite{band2020defining} in our analysis. 
    Consequently, the validity of the equalities \eqref{eq:llleq}, although anticipated, is far from an obvious claim. 
    
    In our proof of Theorem~\ref{thm:RR}, we revise the arguments from \cite{hers,weineffect}.
    To avoid possible cracks and isolated critical points on $\widetilde{\gamma}$, we propose to consider instead of $\widetilde{\gamma}$ the boundary of a more regular set $\text{Int}(\overline{\Gi})$, see \eqref{eq:g-star}, \eqref{eq:eff-star}. 
    Moreover, we provide a rigorous justification of the equalities \eqref{eq:llleq}. 
    In this way, we use the result of \cite{kurdyka2000proof} on the validity of the Arnold-Thom conjecture (see Remark~\ref{rem:thom-conjecture}), a density result of \cite{smith1994smooth}, and the Picone identity. 

The rest of the article is organized as follows.  
In Section~\ref{sec:aux}, we provide several auxiliary results on the problem \eqref{cutproblem2d} in annular domains important for our analysis.  
In Section~\ref{section:NRRN}, we formulate and prove Theorems~\ref{thm:NR1} and \ref{thm:RN1} on the RFK inequality for the first Robin-Neumann and Neumann-Robin eigenvalues, respectively. 
Section~\ref{sec:proof-RR} contains the proof of Theorem~\ref{thm:RR} in the Robin-Robin case. Section~\ref{sec:remarks} concludes the article with a few remarks.

\section{Auxiliary facts for annuli}\label{sec:aux}

In this section, we provide a few auxiliary results on the first eigenvalues $\la_1^{\mathcal{RN}}(\A_{r,R})$, $\la_1^{\mathcal{NR}}(\A_{r,R})$, $\la_1^{\mathcal{RR}}(\A_{r,R})$ and the corresponding eigenfunctions. First, we observe that the radial solution of \eqref{cutproblem2d} on $\A_{r,R}$ satisfies the following Sturm-Liouville problem:
\begin{equation}\label{eq:ode}
\left\{
\begin{aligned}
    -(tv'(t))' &= \la t v(t), \quad t\in (r,R),\\
   -v'(r)+\hi v(r) &=0,\\
     v'(R)+\ho v(R) &=0.
\end{aligned}
\right.
\end{equation}

We start with the symmetry and monotonicity of the  first eigenfunctions of the Robin-Neumann problem, cf.\ \cite[Proposition~2.2]{DellaPiscitelli} in the case of the positive Robin parameter. 
\begin{lemma}\label{lem:mono_NR1}
Let $0\neq \hi \in \mathbb{R} \cup \{+\infty\}$. Also, let $R>r>0$ and $u$ be a positive eigenfunction associated with $\la_1^{\mathcal{RN}}(\A_{r,R})$ 
Then $u$ is radially symmetric, i.e., $u(x) = v(|x|)$. 
Moreover,
\begin{enumerate}[label={\rm(\roman*)}]
    \item\label{lem:mono_NR1:1} 
    if $\hi> 0$, then $v'>0$ in $(r,R)$, i.e., $u$ is strictly increasing (radially) in $\A_{r,R}$.
    \item\label{lem:mono_NR1:2} 
    if $\hi< 0$, then $v'<0$ in $(r,R)$, i.e., $u$ is strictly decreasing (radially) in $\A_{r,R}$.
\end{enumerate}
Furthermore, if $\hi<+\infty$, then $u> 0$ in $\overline{\A_{r,R}}$. 
\end{lemma}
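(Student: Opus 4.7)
The plan is to (i) exploit simplicity and rotational invariance to reduce $u$ to a solution of the ODE \eqref{eq:ode}, (ii) integrate once to read off monotonicity, and (iii) combine the Robin condition with ODE uniqueness to promote positivity up to the closure.

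For (i), I would note that $\A_{r,R}$ together with both boundary conditions is rotation-invariant, so for every rotation $R_\theta$ of the plane, $u \circ R_\theta$ is again a positive first eigenfunction. The simplicity of $\la_1^{\mathcal{RN}}(\A_{r,R})$ noted after \eqref{eq:lambda1} then forces $u \circ R_\theta = c(\theta)\, u$; comparing $L^2$-norms gives $c(\theta)^2 = 1$, and positivity of both sides rules out $c(\theta) = -1$. Hence $u(x) = v(|x|)$ for some $v \in C^1([r,R])$ solving \eqref{eq:ode} at $\la = \la_1^{\mathcal{RN}}(\A_{r,R})$.

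For (ii), I would integrate $-(tv'(t))' = \la\, t v(t)$ from $t$ to $R$ and use $v'(R) = 0$ to obtain
\begin{equation}
    t v'(t) \,=\, \la \int_t^R s\, v(s)\, ds, \qquad t \in [r,R].
\end{equation}
Remark~\ref{rem:sign} supplies $\la > 0$ when $\hi \in (0,+\infty]$ and $\la < 0$ when $\hi < 0$. Since $v > 0$ on $(r,R)$ by assumption, the right-hand side is strictly of the sign of $\la$ on $[r,R)$, which immediately yields \ref{lem:mono_NR1:1} and \ref{lem:mono_NR1:2}.

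For (iii), assume $\hi < +\infty$. Evaluating the displayed identity at $t = r$ gives $rv'(r) = \la \int_r^R s v(s)\, ds \neq 0$, and the inner boundary condition $v(r) = v'(r)/\hi$ then forces $v(r) > 0$ in both cases, since the signs of $\la$, $v'(r)$, and $\hi$ match up. At the outer boundary, if $v(R) = 0$, then together with $v'(R) = 0$ this would give vanishing Cauchy data for the regular linear ODE \eqref{eq:ode} on $[r,R]$ (whose leading coefficient $t$ is bounded away from zero); uniqueness would force $v \equiv 0$, contradicting interior positivity. Hence $v > 0$ on $[r,R]$. I expect no genuine obstacle here: the only step warranting a second glance is verifying that the proportionality factor in the rotation argument is $+1$, and that is immediate from positivity.
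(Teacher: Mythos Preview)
Your proposal is correct and follows essentially the same route as the paper: radial symmetry via simplicity, monotonicity by integrating $-(tv')'=\la tv$ once and using $v'(R)=0$ together with the sign of $\la$ from Remark~\ref{rem:sign}. The only cosmetic difference is the last step: for positivity on $\overline{\A_{r,R}}$ you invoke the Robin relation at $t=r$ and ODE uniqueness at $t=R$, whereas the paper dispatches both endpoints with Hopf's lemma (if $v$ vanished at a boundary point, the boundary condition would force $v'$ to vanish there too, contradicting Hopf).
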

\begin{proof}
The radiality of $u$ is an immediate consequence of the simplicity of $\la_1^{\mathcal{RN}}(\A_{r,R})$. By taking $v(|x|)=u(x)$, as $\ho=0$, from \eqref{eq:ode} we get 
\begin{equation}\label{eq:NR:ode}
\left\{
\begin{aligned}
    -(tv'(t))' &=\la_1^{\mathcal{RN}}(\A_{r,R}) t v(t), \quad t\in (r,R),\\
    -v'(r)+\hi v(r) &=0,\\
    v'(R)  &=0.
\end{aligned}
\right.
\end{equation}
We proceed to prove only the assertion \ref{lem:mono_NR1:1}; 
the proof of \ref{lem:mono_NR1:2} is analogous.
Let $\hi> 0$. 
Since $\la_1^{\mathcal{RN}}(\A_{r,R})>0$ (see Remark~\ref{rem:sign}) and $v>0$, we deduce from \eqref{eq:NR:ode} that $t\mapsto tv'(t)$ is strictly decreasing in $(r,R)$. 
In view of the boundary condition $v'(R)=0$, we have $v'>0$ in $(r,R)$, and hence the claim follows.

Finally, if $\hi<+\infty$ and $v(t)=0$ for $t=r$ or $t=R$, then we also get $v'(t)=0$ for such $t$ by the boundary conditions. But this contradicts Hopf's lemma. 
Thus, we have $v>0$ in $[r,R]$. 
\end{proof}

Analogous properties are also valid for the positive first eigenfunctions of the Neumann-Robin problem and can be proved in much the same way, see \cite[Proposition~2.5]{PaoliPiscitelli}.
\begin{lemma}\label{lem:mono_RN1}
Let $0\neq\ho \in \mathbb{R} \cup \{+\infty\}$. 
Also, let $R>r>0$ and $u$ be a positive eigenfunction associated with $\la_1^{\mathcal{NR}}(\A_{r,R})$. 
Then $u$ is radially symmetric, i.e., $u(x) = v(|x|)$.
Moreover, 
\begin{enumerate}[label={\rm(\roman*)}]
    \item\label{lem:mono_RN1:1} 
    if $\ho> 0$, then $v'<0$ in $(r,R)$, i.e., $u$ is strictly decreasing (radially) in $\A_{r,R}$.
    \item\label{lem:mono_RN1:2} 
    if $\ho< 0$, then $v'>0$ in $(r,R)$, i.e., $u$ is strictly increasing (radially) in $\A_{r,R}$.
    \end{enumerate}
Furthermore, if $\ho<+\infty$, then $u> 0$ in $\overline{\A_{r,R}}$. 
\end{lemma}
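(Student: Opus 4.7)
The plan is to mirror the proof of Lemma~\ref{lem:mono_NR1}, simply interchanging the roles of the inner and outer boundaries. First, radiality follows from the simplicity of $\la_1^{\mathcal{NR}}(\A_{r,R})$: the problem is invariant under the rotation group acting on $\A_{r,R}$, so any positive first eigenfunction must agree with each of its rotations, hence be radial. Writing $u(x) = v(|x|)$ and specializing the Sturm-Liouville system \eqref{eq:ode} to $\hi = 0$, one obtains
\begin{equation*}
\left\{
\begin{aligned}
-(tv'(t))' &= \la_1^{\mathcal{NR}}(\A_{r,R})\, t v(t), \quad t \in (r,R), \\
v'(r) &= 0, \\
v'(R) + \ho v(R) &= 0.
\end{aligned}
\right.
\end{equation*}

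For part \ref{lem:mono_RN1:1}, I would invoke Remark~\ref{rem:sign}, which gives $\la_1^{\mathcal{NR}}(\A_{r,R}) > 0$ when $\ho > 0$. The ODE then forces $-(tv'(t))' > 0$ on $(r,R)$, i.e.\ the map $t \mapsto tv'(t)$ is strictly decreasing. Combined with the \emph{initial} value $rv'(r) = 0$ coming from the Neumann condition, this yields $tv'(t) < 0$, and hence $v'(t) < 0$, throughout $(r,R)$. Part \ref{lem:mono_RN1:2} is symmetric: when $\ho < 0$, Remark~\ref{rem:sign} gives $\la_1^{\mathcal{NR}}(\A_{r,R}) < 0$, so $t \mapsto tv'(t)$ is strictly increasing from the value $0$ at $t=r$, forcing $v' > 0$ on $(r,R)$.

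For the strict positivity in the last assertion, I would argue by contradiction using Hopf's boundary point lemma applied to the positive eigenfunction $u$. If $v(R) = 0$, then the Robin condition gives $v'(R) = -\ho v(R) = 0$, contradicting the strict sign of the outward normal derivative predicted by Hopf's lemma at $\pa B_R$. Likewise, if $v(r) = 0$ then, since $v'(r) = 0$ always holds (Neumann), the outward normal derivative of $u$ on $\pa B_r$ (which equals $-v'(r)$, the outward normal pointing toward the origin) also vanishes, again contradicting Hopf.

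The only subtlety worth flagging is that the first-integral argument of Lemma~\ref{lem:mono_NR1} starts from the \emph{outer} endpoint $R$, whereas here the vanishing datum $rv'(r) = 0$ sits at the \emph{inner} endpoint $r$. This requires no substantive change, only careful bookkeeping of signs, since the ODE $(tv'(t))' = -\la_1^{\mathcal{NR}}(\A_{r,R})\, tv(t)$ delivers a monotonicity of $t \mapsto tv'(t)$ that, together with a single prescribed zero, determines the sign of $v'$ globally on $(r,R)$.
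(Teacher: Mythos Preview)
Your proof is correct and follows precisely the approach the paper indicates: it simply mirrors the argument for Lemma~\ref{lem:mono_NR1} with the roles of the boundaries swapped, using the Neumann condition $rv'(r)=0$ as the anchor for the monotonicity of $t\mapsto tv'(t)$, and applying Hopf's lemma for strict positivity on $\overline{\A_{r,R}}$. The paper itself does not spell out a proof but refers to the same line of reasoning (and to \cite[Proposition~2.5]{PaoliPiscitelli}).
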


Let us now give a counterpart of the previous two lemmas in the Robin-Robin case. 
\begin{lemma}\label{lem:mono_RR1}
Let $\hi,\ho \in \mathbb{R} \cup \{+\infty\}$ be such that $\hi \cdot \ho > 0$. 
Also, let $R>r>0$ and $u$ be a positive eigenfunction associated with $\la_1^{\mathcal{RR}}(\A_{r,R})$. 
Then $u$ is radially symmetric, i.e., $u(x) = v(|x|)$.
Moreover, there exists $\sigma \in (r,R)$ such that 
\begin{enumerate}[label={\rm(\roman*)}]
    \item\label{lem:mono_RR1:1}
    if $\hi,\ho> 0$, then $v'>0$ in $(r,\sigma)$, $v'(\sigma)=0$, and $v'<0$ in $(\sigma,R)$.
    \item\label{lem:mono_RR1:2} 
    if $\hi,\ho< 0$, then $v'<0$ in $(r,\sigma)$, $v'(\sigma)=0$, and $v'>0$ in $(\sigma,R)$.
    \end{enumerate}
In particular, $u|_{\A_{\sigma,R}}$ and $u|_{\A_{r,\sigma}}$ are positive eigenfunctions corresponding to $\la_1^{\mathcal{NR}}(\A_{\sigma,R})$ and $\la_1^{\mathcal{RN}}(\A_{r,\sigma})$, respectively. 
\end{lemma}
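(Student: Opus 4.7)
The plan is to first establish the radial symmetry of $u$ from the simplicity of $\la_1^{\mathcal{RR}}(\A_{r,R})$: since both $\A_{r,R}$ and the boundary conditions are rotation-invariant, any rotation of $u$ is again a positive first eigenfunction and hence, by simplicity, agrees with $u$; so $u(x) = v(|x|)$ for some $v$, and $v$ then solves the Sturm-Liouville problem \eqref{eq:ode} with $\la = \la_1^{\mathcal{RR}}(\A_{r,R})$.

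Next I would determine the sign behavior of $v'$ by combining the ODE with Remark~\ref{rem:sign}. In case \ref{lem:mono_RR1:1}, we have $\la > 0$, so $-(tv'(t))' = \la t v(t) > 0$ shows that $t \mapsto tv'(t)$ is strictly decreasing on $(r,R)$. Reading the boundary conditions gives $v'(r) = \hi v(r) > 0$ and $v'(R) = -\ho v(R) < 0$ (when $\hi$ or $\ho$ equals $+\infty$ the corresponding boundary value of $v$ vanishes, but Hopf's lemma applied to $v>0$ in the interior still yields the same strict sign of $v'$ there). By continuity and strict monotonicity of $t \mapsto tv'(t)$ from the positive value $rv'(r)$ to the negative value $Rv'(R)$, there is a unique $\sigma \in (r,R)$ with $v'(\sigma) = 0$, and $v' > 0$ on $(r,\sigma)$, $v' < 0$ on $(\sigma,R)$. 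Case \ref{lem:mono_RR1:2} is symmetric: now $\la < 0$ makes $t \mapsto tv'(t)$ strictly increasing, the Robin conditions give $v'(r) < 0$ and $v'(R) > 0$, so again a unique interior zero $\sigma$ appears with the opposite monotonicity.

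For the ``in particular'' statement, I would use $v'(\sigma) = 0$ to see that $u|_{\A_{\sigma,R}}$ is $C^1$ up to $\pa B_\sigma$, satisfies the original boundary condition on $\pa B_R$, and the homogeneous Neumann condition on $\pa B_\sigma$; hence it is a positive eigenfunction of the Neumann-Robin problem on $\A_{\sigma,R}$ with eigenvalue $\la_1^{\mathcal{RR}}(\A_{r,R})$. Since any eigenfunction of \eqref{cutproblem2d} that does not change sign must correspond to the first eigenvalue (this is a standard consequence of simplicity and $L^2$-orthogonality of eigenfunctions at different levels), it follows that $\la_1^{\mathcal{RR}}(\A_{r,R}) = \la_1^{\mathcal{NR}}(\A_{\sigma,R})$ and $u|_{\A_{\sigma,R}}$ is a first eigenfunction; the same reasoning applied to $u|_{\A_{r,\sigma}}$ yields the Robin-Neumann statement.

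The only non-routine point I anticipate is the unified treatment of the Dirichlet endpoints $\hi = +\infty$ or $\ho = +\infty$ in the sign analysis, where the Robin condition degenerates and one must invoke Hopf's lemma (using positivity of $u$ in the interior and $C^1$-regularity up to the boundary from \cite[Proposition~2.1]{hassannezhad2024pleijel}) to recover the strict sign of $v'$ at $r$ or $R$. Once this is handled, everything reduces to elementary monotonicity in the Sturm-Liouville equation, exactly as in the proofs of Lemmas~\ref{lem:mono_NR1} and \ref{lem:mono_RN1}.
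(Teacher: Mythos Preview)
Your proposal is correct and close to the paper's argument, with one small difference worth noting. After establishing radiality and the signs $v'(r)>0$, $v'(R)<0$, the paper locates $\sigma$ as a point of global maximum of $v$, then \emph{invokes Lemma~\ref{lem:mono_NR1}} on the subinterval $(r,\sigma)$ (and its counterpart on $(\sigma,R)$) to obtain the strict monotonicity of $v$ on each piece. You instead argue directly that $t\mapsto tv'(t)$ is strictly monotone on all of $(r,R)$, which immediately gives a unique zero $\sigma$ of $v'$ and the correct signs on both sides, without appealing to the earlier lemmas. Your route is slightly more self-contained; the paper's route makes the ``in particular'' statement essentially built into the proof.

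One minor point: in the finite Robin case you write $v'(r)=\hi v(r)>0$ as if $v(r)>0$ were automatic, reserving Hopf's lemma only for the Dirichlet endpoints. In fact the same Hopf argument is needed here too (if $v(r)=0$ then the Robin condition forces $v'(r)=0$, contradicting Hopf), exactly as the paper does. This is routine and you clearly have the tool in hand, but it should be stated for both the Robin and Dirichlet cases.
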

\begin{proof}
    The radiality of $u$ follows from the simplicity of $\la_1^{\mathcal{RR}}(\A_{r,R})$. 
    We proceed to prove only the assertion \ref{lem:mono_RR1:1}; the proof of \ref{lem:mono_RR1:2} is analogous.
    Let $\hi,\ho> 0$. 
    From \eqref{eq:ode}, we first observe that $v'(r) > 0$ and $v'(R) < 0$.
    If we suppose that $v'(r)= 0$, then, from the boundary condition we get $v(r)=0$, which contradicts Hopf's lemma asserting that $v'(r)>0$.
    In the same way, it can be shown that $v'(R)<0$. 
    Consequently, there exists $\si\in(r,R)$ such that $v(\si)=\max_{r < t < R} v(t)$.
    In particular, we have $v'(\si)=0$, and hence $v$ is a solution of the following problem in $(r,\si)$:
    \begin{equation}\label{eq:SLRN}
\left\{
\begin{aligned}
    -(tv'(t))' &=\la_1^{\mathcal{RR}}(\A_{r,R}) t v(t), \quad t\in (r,\si),\\
    -v'(r)+\hi v(r) &=0,\\
    v'(\si)  &=0.
\end{aligned}
\right.
\end{equation}
Since $v$ is positive in $(r,\si)$, we see that $v$ is a first eigenfunction of \eqref{eq:SLRN}. Therefore, Lemma~\ref{lem:mono_NR1}-\ref{lem:mono_NR1:1} gives $v'>0$ in $(r,\si)$. 
A similar analysis on $(\si,R)$ yields $v'<0$ in $(\si,R)$.
\end{proof}

Let us now investigate the domain monotonicity of the first Robin-Neumann and Neumann-Robin eigenvalues.
\begin{lemma}\label{lem:domain-monot}
    Let $R>r>0$. 
    Then the following assertions hold:
    \begin{enumerate}[label={\rm(\roman*)}]
        \item\label{lem:domain-monot:1} 
        if $\hi>0$, then $\delta \mapsto \lambda_1^{\mathcal{RN}}(\A_{r,\delta})$ is strictly decreasing in $(r,R)$, and 
        if $\ho>0$, then $\delta \mapsto \lambda_1^{\mathcal{NR}}(\A_{\delta,R})$ is strictly increasing in $(r,R)$.
\item\label{lem:domain-monot:2} 
if $\hi<0$, then
$\delta \mapsto \lambda_1^{\mathcal{RN}}(\A_{r,\delta})$ is strictly increasing in $(r,R)$, and 
if $\ho < 0$, then $\delta \mapsto \lambda_1^{\mathcal{NR}}(\A_{\delta,R})$ is strictly decreasing in $(r,R)$. 
\end{enumerate}
\end{lemma}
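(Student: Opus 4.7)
The plan is to establish each strict inequality via the variational characterisation \eqref{eq:lambda1} applied to a carefully chosen test function, using the radial monotonicity and positivity from Lemmas~\ref{lem:mono_NR1} and \ref{lem:mono_RN1} together with the sign information in Remark~\ref{rem:sign}. The key point is that an \emph{extension-by-constant} construction works when $\lambda_1$ is positive, whereas the opposite sign of $\lambda_1$ forces one to instead \emph{restrict} the eigenfunction to a smaller annulus and extract a useful boundary term via integration by parts in the Sturm--Liouville equation \eqref{eq:ode}. This is precisely why the direction of monotonicity reverses with the sign of the Robin parameter.

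Fix $r < \delta_1 < \delta_2 < R$. For the case $\hi > 0$, let $w = \psi(|x|)$ be the positive first eigenfunction on $\A_{r,\delta_1}$, which is strictly radially increasing with $\psi(\delta_1) > 0$ by Lemma~\ref{lem:mono_NR1}-\ref{lem:mono_NR1:1}. Extend $w$ to $\tilde w \in \tilde{H}^1(\A_{r,\delta_2})$ by setting $\tilde w \equiv \psi(\delta_1)$ on $\A_{\delta_1,\delta_2}$. The numerator of the Rayleigh quotient of $\tilde w$ coincides with that of $w$ (the gradient vanishes on the annular strip and $\tilde w = w$ on $\pa B_r$) and equals $\lambda_1^{\mathcal{RN}}(\A_{r,\delta_1})\int_{\A_{r,\delta_1}} w^2 \dx > 0$ by Remark~\ref{rem:sign}, while the denominator is strictly larger than $\int_{\A_{r,\delta_1}} w^2 \dx$. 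This yields $\lambda_1^{\mathcal{RN}}(\A_{r,\delta_2}) < \lambda_1^{\mathcal{RN}}(\A_{r,\delta_1})$. For the case $\hi < 0$, let $u = v(|x|)$ be the positive eigenfunction on the \emph{larger} annulus $\A_{r,\delta_2}$, which is strictly radially decreasing and strictly positive on $\overline{\A_{r,\delta_2}}$ by Lemma~\ref{lem:mono_NR1}-\ref{lem:mono_NR1:2}. Setting $\lambda := \lambda_1^{\mathcal{RN}}(\A_{r,\delta_2})$ and using $u|_{\A_{r,\delta_1}}$ as a test function gives
\begin{align*}
\lambda_1^{\mathcal{RN}}(\A_{r,\delta_1})
\,\leq\,
\lambda + \frac{\lambda \int_{\A_{\delta_1,\delta_2}} u^2 \dx - \int_{\A_{\delta_1,\delta_2}} |\nabla u|^2 \dx}{\int_{\A_{r,\delta_1}} u^2 \dx}.
\end{align*}
Multiplying the radial equation in \eqref{eq:ode} by $v$ and integrating on $(\delta_1,\delta_2)$ with the Neumann condition $v'(\delta_2) = 0$ evaluates the bracketed numerator as $2\pi\,\delta_1\, v'(\delta_1)\, v(\delta_1)$, which is strictly negative since $v'(\delta_1) < 0$ and $v(\delta_1) > 0$. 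This yields the reverse monotonicity $\lambda_1^{\mathcal{RN}}(\A_{r,\delta_1}) < \lambda_1^{\mathcal{RN}}(\A_{r,\delta_2})$.

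The two assertions for $\lambda_1^{\mathcal{NR}}(\A_{\delta,R})$ follow by entirely symmetric constructions, swapping the roles of the inner and outer boundary components and invoking Lemma~\ref{lem:mono_RN1}: for $\ho > 0$, extend the eigenfunction of $\A_{\delta_2,R}$ inward by the constant value it takes on $\pa B_{\delta_2}$; for $\ho < 0$, restrict the eigenfunction of $\A_{\delta_1,R}$ to $\A_{\delta_2,R}$ and use the Neumann condition $v'(\delta_1) = 0$ in the analogous integration by parts, which produces the strictly negative boundary contribution $-2\pi\,\delta_2\, v'(\delta_2)\, v(\delta_2)$. No serious obstacle is expected; the only care required is that for $\hi = +\infty$ or $\ho = +\infty$ the test function must vanish on the Dirichlet part of the boundary, but both the extension-by-constant (which inherits $w \equiv 0$ there) and the plain restriction preserve this property automatically.
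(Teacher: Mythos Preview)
Your proposal is correct and follows essentially the same approach as the paper: an extension-by-constant argument when the eigenvalue is positive, and a restriction argument combined with integration by parts in the radial ODE (using the sign of $v'$ from Lemmas~\ref{lem:mono_NR1}, \ref{lem:mono_RN1}) when it is negative. The only cosmetic difference is that in the case $\hi<0$ the paper integrates directly over $(r,\delta_1)$, whereas you equivalently subtract the integral over $(\delta_1,\delta_2)$ from the full Rayleigh identity; the resulting boundary term $2\pi\,\delta_1\,v'(\delta_1)\,v(\delta_1)$ is the same.
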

\begin{proof}
\ref{lem:domain-monot:1} 
Let $\ho> 0$. 
For any $r<\delta_1<\delta_2<R$, taking a first eigenfunction $v(|x|)$ corresponding to $\lambda_1^{\mathcal{NR}}(\A_{\delta_2,R})$ and extending it by the constant $v(\delta_2)$ to $\A_{\delta_1,R}$, we get a test function for $\lambda_1^{\mathcal{NR}}(\A_{\delta_1,R})$ whose Rayleigh quotient is strictly smaller than $\lambda_1^{\mathcal{NR}}(\A_{\delta_2,R})$, since $\lambda_1^{\mathcal{NR}}(\A_{\delta_2,R})>0$ by Remark~\ref{rem:sign}. If $\hi>0,$ a similar extension argument ensures the strict monotonicity of $\delta \mapsto \lambda_1^{\mathcal{RN}}(\A_{r,\delta})$. 

\ref{lem:domain-monot:2} 
Let $\hi < 0$ and fix $r<\delta_1<\delta_2<R$. 
Let $v(|x|)$ be a positive eigenfunction  corresponding to $\lambda_1^{\mathcal{RN}}(\A_{r,\delta_2})$, so that $v$ satisfies 
\begin{equation}\label{eq:NR:ode-delta}
	\left\{
	\begin{aligned}
		-(tv'(t))' &=\la_1^{\mathcal{RN}}(\A_{r,\delta_2}) t v(t), \quad t\in (r,\delta_2),\\
		-v'(r)+\hi v(r) &=0,\\
		v'(\delta_2)  &=0.
	\end{aligned}
	\right.
\end{equation}
Multiplying the equation in \eqref{eq:NR:ode-delta} by $v$ and integrating by parts over a smaller interval $(r,\delta_1)$, we get
$$
\int_r^{\delta_1} t (v'(t))^2 \dt
+
\hi r (v(r))^2
-
\delta_1 v'(\delta_1) v(\delta_1)
=
\la_1^{\mathcal{RN}}(\A_{r,\delta_2}) 
\int_r^{\delta_1}
t (v(t))^2 \dt.
$$
In view of Lemma~\ref{lem:mono_NR1}~\ref{lem:mono_NR1:2}, we have $v'(\delta_1)<0$, which yields
$$
\int_r^{\delta_1} t (v'(t))^2 \dt
+
\hi r (v(r))^2
<
\la_1^{\mathcal{RN}}(\A_{r,\delta_2}) 
\int_r^{\delta_1} t (v(t))^2 \dt.
$$
Observing that the restriction of $v(|\cdot|)$ to $\A_{r,\delta_1}$ is a valid test function for the definition of $\la_1^{\mathcal{RN}}(\A_{r,\delta_1})$, we arrive at the desired strict monotonicity:
$$
\la_1^{\mathcal{RN}}(\A_{r,\delta_1})
\leq
\frac{\int_r^{\delta_1} t (v'(t))^2 \dt
	+
	\hi r (v(r))^2}{\int_r^{\delta_1}	t (v(t))^2 \dt}
< 
\la_1^{\mathcal{RN}}(\A_{r,\delta_2}).
$$
The proof for the case of $\delta \mapsto \lambda_1^{\mathcal{NR}}(\A_{\delta,R})$ is analogous. 
\end{proof}

Finally, we provide the following minimax characterizations of $\lambda_1^{\mathcal{RR}}(\A_{r,R})$, 
see \cite[Lemma~2.2]{bobkov2017some} and \cite[Theorem~3.3]{kajikiya2024asymptotic} for related statements.
\begin{lemma}\label{lem:character}
Let $\hi,\ho \in \mathbb{R} \cup \{+\infty\}$ and $\hi \cdot \ho > 0$, and let $R>r>0$.
Then
\begin{align}
\lambda_1^{\mathcal{RR}}(\A_{r,R}) 
&=
\min_{\delta \in (r,R)}
\max \{
\lambda_1^{\mathcal{RN}}(\A_{r,\delta}),
\lambda_1^{\mathcal{NR}}(\A_{\delta,R})
\}
\\
&=
\max_{\delta \in (r,R)}
\min \{
\lambda_1^{\mathcal{RN}}(\A_{r,\delta}),
\lambda_1^{\mathcal{NR}}(\A_{\delta,R})
\}.
\end{align}
\end{lemma}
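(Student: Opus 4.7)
The strategy is to identify an explicit splitting radius that simultaneously realizes the minimax and the maximin, and then to rule out any other candidate using the one-sided monotonicity supplied by Lemma~\ref{lem:domain-monot}.

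\textbf{Step 1: Splitting radius from the Robin-Robin eigenfunction.} By Lemma~\ref{lem:mono_RR1}, the positive first eigenfunction $u$ associated with $\lambda_1^{\mathcal{RR}}(\A_{r,R})$ is radial and admits a unique critical radius $\sigma \in (r,R)$; moreover $u|_{\A_{r,\sigma}}$ is a first eigenfunction for $\lambda_1^{\mathcal{RN}}(\A_{r,\sigma})$ and $u|_{\A_{\sigma,R}}$ is a first eigenfunction for $\lambda_1^{\mathcal{NR}}(\A_{\sigma,R})$. Hence
\begin{equation}
\lambda_1^{\mathcal{RR}}(\A_{r,R}) \;=\; \lambda_1^{\mathcal{RN}}(\A_{r,\sigma}) \;=\; \lambda_1^{\mathcal{NR}}(\A_{\sigma,R}),
\end{equation}
so the common value equals $\max\{\lambda_1^{\mathcal{RN}}(\A_{r,\sigma}),\lambda_1^{\mathcal{NR}}(\A_{\sigma,R})\}$ and also $\min\{\lambda_1^{\mathcal{RN}}(\A_{r,\sigma}),\lambda_1^{\mathcal{NR}}(\A_{\sigma,R})\}$. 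This furnishes the inequality ``$\geq$'' in the minimax characterization and ``$\leq$'' in the maximin characterization at the particular choice $\delta=\sigma$.

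\textbf{Step 2: Monotonicity rules out other $\delta$.} Set $f(\delta):=\lambda_1^{\mathcal{RN}}(\A_{r,\delta})$ and $g(\delta):=\lambda_1^{\mathcal{NR}}(\A_{\delta,R})$ for $\delta\in(r,R)$. By Lemma~\ref{lem:domain-monot}, when $\hi,\ho>0$ the function $f$ is strictly decreasing and $g$ is strictly increasing, while when $\hi,\ho<0$ the function $f$ is strictly increasing and $g$ is strictly decreasing; in either case $f$ and $g$ are strictly monotone in opposite directions. Combined with $f(\sigma)=g(\sigma)=\lambda_1^{\mathcal{RR}}(\A_{r,R})$, this yields in both cases, for every $\delta\in(r,R)\setminus\{\sigma\}$,
\begin{equation}
\max\{f(\delta),g(\delta)\} \;>\; \lambda_1^{\mathcal{RR}}(\A_{r,R}) \;>\; \min\{f(\delta),g(\delta)\}.
\end{equation}
Indeed, one of $f(\delta),g(\delta)$ is strictly greater than $\lambda_1^{\mathcal{RR}}(\A_{r,R})$ and the other is strictly smaller, depending on whether $\delta<\sigma$ or $\delta>\sigma$ and on the sign of the Robin parameters.

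\textbf{Step 3: Conclusion.} From Step~1, the minimum over $\delta\in(r,R)$ of $\max\{f,g\}$ and the maximum of $\min\{f,g\}$ are both attained at $\delta=\sigma$ and equal $\lambda_1^{\mathcal{RR}}(\A_{r,R})$; from Step~2, $\sigma$ is the unique minimizer of $\max\{f,g\}$ and the unique maximizer of $\min\{f,g\}$. This proves both identities. I do not foresee a genuine obstacle here: once Lemmas~\ref{lem:mono_RR1} and \ref{lem:domain-monot} are in place, the argument is a direct bookkeeping exercise about two strictly monotone functions crossing at a single point. The only mild subtlety is that, in the negative case, eigenvalues are negative so one must be careful to use the ordering rather than absolute values, which is taken care of automatically by writing the argument in terms of the opposite monotonicities of $f$ and $g$.
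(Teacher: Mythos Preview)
Your proof is correct and follows essentially the same approach as the paper: identify the splitting radius $\sigma$ via Lemma~\ref{lem:mono_RR1} to get $\lambda_1^{\mathcal{RR}}(\A_{r,R}) = \lambda_1^{\mathcal{RN}}(\A_{r,\sigma}) = \lambda_1^{\mathcal{NR}}(\A_{\sigma,R})$, then invoke the opposite strict monotonicities from Lemma~\ref{lem:domain-monot} to conclude that $\sigma$ is the unique crossing point, which forces both the minimax and maximin to be attained there. The only cosmetic difference is that you spell out the case distinction $\delta\lessgtr\sigma$ explicitly, whereas the paper leaves this implicit.
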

\begin{proof}
In view of Lemma~\ref{lem:mono_RR1}, there exists $\sigma \in (r,R)$ such that 
$$
\lambda_1^{\mathcal{RR}}(\A_{r,R} )
=
\lambda_1^{\mathcal{RN}}(\A_{r,\sigma})
=
\lambda_1^{\mathcal{NR}}(\A_{\sigma,R}).
$$
The strict monotonicity of  $\lambda_1^{\mathcal{RN}}(\A_{r,\delta})$ and $\lambda_1^{\mathcal{NR}}(\A_{\delta,R})$ with respect to $\delta$ described in Lemma~\ref{lem:domain-monot} implies that $\sigma$ is the unique point such that $\lambda_1^{\mathcal{RN}}(\A_{r,\sigma})=\lambda_1^{\mathcal{NR}}(\A_{\sigma,R})$.
Consequently, we have
\begin{gather}
\min_{\delta \in (r,R)}
\max \{
\lambda_1^{\mathcal{RN}}(\A_{r,\delta}),
\lambda_1^{\mathcal{NR}}(\A_{\delta,R})
\}
\\
=
\lambda_1^{\mathcal{RN}}(\A_{\sigma, r})
=
\lambda_1^{\mathcal{NR}}(\A_{\sigma,R})
\\
=
\max_{\delta \in (r,R)}
\min \{
\lambda_1^{\mathcal{RN}}(\A_{r,\delta}),
\lambda_1^{\mathcal{NR}}(\A_{\delta,R})
\},
\end{gather} 
which gives the desired conclusion. 
\end{proof}

\section{Robin-Neumann and Neumann-Robin cases}\label{section:NRRN}

In this section, we derive the RFK inequality for the first Robin-Neumann and Neumann-Robin eigenvalues without any sign assumption on the Robin parameter and under slightly more general regularity assumptions on $\Omega$.
We start with the result for the first eigenvalue $\la_1^{\mathcal{RN}}(\Om)$ of the problem
\begin{align}\tag{$\mathcal{RN}$}\label{eq:Neumann_Robin1}
\left\{\begin{aligned}
    -\Delta u &= \lambda u \quad\text{in} \quad\Omega,\\
    \frac{\pa u}{\pa \nu}+ \hi u &=0 \quad\;\;\text{on}\quad  \pa \Omi,\\
     \frac{\pa u}{\pa \nu}&=0 \quad\;\;\text{on}\quad  \pa \Omo.
	\end{aligned}
	 \right.
\end{align}

\begin{theorem}[RFK inequality in the Robin-Neumann case]\label{thm:NR1}
Let $\Omi,\Omo$ be Lipschitz. 
Also, let $0 \neq \hi \in \R\cup \{+\infty\}$ and $R>r>0$ be such that 
$$
|\Om|=|\A_{r,R}|
\quad \text{and} \quad 
|\pa \Omi|_1=|\pa B_{r}|_1.
$$
Then 
\begin{equation}
    \la_1^{\mathcal{RN}}(\Om)\leq \la_1^{\mathcal{RN}}(\A_{r,R}).
\end{equation}
\end{theorem}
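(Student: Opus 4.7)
The plan is to apply the method of interior parallels to construct a radial-like test function in $H^1(\Om)$ for the variational characterization \eqref{eq:lambda1} of $\la_1^{\mathcal{RN}}(\Om)$, built from the radial first eigenfunction on $\A_{r,R}$. By Lemma~\ref{lem:mono_NR1}, a positive first eigenfunction of \eqref{eq:Neumann_Robin1} on $\A_{r,R}$ has the form $u(x)=v(|x|)$ with $v$ strictly monotone on $[r,R]$; the direction of monotonicity, dictated by the sign of $\hi$, does not affect the subsequent construction.

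Set $d(x)\dq\mathrm{dist}(x,\pa\Omi)$ for $x\in\overline{\Om}$, $\om(s)\dq\{x\in\Om:d(x)<s\}$, $A(s)\dq|\om(s)|$, and $s^*\dq\max_{\overline{\Om}}d$. I define $t:[0,s^*]\to[r,R]$ by
$$t(s)\dq\sqrt{r^2+A(s)/\pi},$$
so that $t$ is absolutely continuous and nondecreasing, with $t(0)=r$ and $t(s^*)=R$ by the hypothesis $|\Om|=\pi(R^2-r^2)$. The candidate test function
$$w(x)\dq v(t(d(x))),\qquad x\in\Om,$$
belongs to $H^1(\Om)$ by the Lipschitz regularity of $d$ together with the smoothness of $v$ and $t$; moreover $w|_{\pa\Omi}=v(r)$, which in the Dirichlet case $\hi=+\infty$ equals $0$ and therefore places $w$ in $\tilde{H}^1(\Om)$.

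I then evaluate the three parts of the Rayleigh quotient of $w$. Using the coarea formula applied to $d$ (whose gradient has unit modulus a.e.), the identity $A'(s)=|\Ga_s|_1$ a.e.\ (with $\Ga_s\dq\{x\in\Om:d(x)=s\}$), and the relation $|\Ga_s|_1=2\pi\,t(s)\,t'(s)$ obtained by differentiating $\pi t^2=\pi r^2+A$, the change of variables $\tau=t(s)$ yields
$$\int_\Om w^2\dx=\int_r^R v(\tau)^2\,2\pi\tau\,\mathrm{d}\tau=\int_{\A_{r,R}}u^2\dx,\qquad \int_\Om|\nabla w|^2\dx=\int_r^R v'(\tau)^2\,t'(s(\tau))^2\,2\pi\tau\,\mathrm{d}\tau.$$
Meanwhile, the hypothesis $|\pa\Omi|_1=|\pa B_r|_1=2\pi r$ directly gives
$$\hi\int_{\pa\Omi}w^2\dsi=\hi\,v(r)^2\,|\pa\Omi|_1=\hi\int_{\pa B_r}u^2\dsi.$$
Hence the target inequality $\la_1^{\mathcal{RN}}(\Om)\le\la_1^{\mathcal{RN}}(\A_{r,R})$ reduces to the single pointwise bound $t'(s)\le 1$ for a.e.\ $s\in(0,s^*)$: this forces the gradient integral on $\Om$ to be at most the one on $\A_{r,R}$, while the other two terms agree exactly, and the direction of the Rayleigh-quotient inequality is correct regardless of the sign of $\hi$ (and hence of $\la_1^{\mathcal{RN}}(\A_{r,R})$) since both $D(w)=D(u)>0$ and $N(w)\le N(u)$.

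The pointwise inequality $t'(s)\le 1$ is equivalent to the quantitative Bonnesen-type bound
$$|\Ga_s|_1^2\;\le\;4\pi^2 t(s)^2\;=\;|\pa\Omi|_1^2+4\pi A(s),$$
and is the principal obstacle. Setting $g(s)\dq 4\pi^2 t(s)^2-|\Ga_s|_1^2$, one has $g(0)=0$ and $g'(s)=2|\Ga_s|_1\bigl(2\pi-\tfrac{d}{ds}|\Ga_s|_1\bigr)$, so the claim reduces to the classical perimeter-growth estimate $\tfrac{d}{ds}|\Ga_s|_1\le 2\pi$ a.e.\ for outer parallels of $\overline{\Omi}$ in the plane. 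For $s$ so small that $K_s\dq\overline{\Omi\cup\om(s)}$ is contained in $\overline{\Omo}$ this is a standard Steiner-type consequence; once $K_s$ meets $\pa\Omo$, the level set $\Ga_s$ is truncated along $\pa\Omo$, which can only decrease $|\Ga_s|_1$ and so strengthens the estimate. Residual technicalities arising from the mere Lipschitz regularity of $\pa\Om$ would be handled via Rademacher's theorem together with an approximation of $\Omi$, $\Omo$ by smoother nested domains, in the spirit of \cite{payne}.
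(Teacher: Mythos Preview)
Your approach differs from the paper's in an interesting way: you use an \emph{area parametrization} $t(s)=\sqrt{r^2+A(s)/\pi}$, which makes the $L^2$-norms of the test function and of $u$ coincide exactly and forces the comparison into the Dirichlet integral (via the pointwise bound $t'\le 1$). The paper instead uses the \emph{Hersch parametrization} $t(\delta)=\int_0^\delta s(\rho)^{-1}\drh$, which makes the Dirichlet integrals coincide exactly and pushes the comparison into the $L^2$-norm; this is why the paper must split into the cases $\hi>0$ and $\hi<0$ (the direction of the $L^2$-inequality flips), while your scheme would handle both signs at once.

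Unfortunately your key step fails. The reduction to $t'(s)\le 1$ is equivalent to the Bonnesen-type estimate
\[
|\Gamma_s|_1^{\,2}\ \le\ |\pa\Omi|_1^{\,2}+4\pi\,A(s),
\]
and while this is indeed an identity for the \emph{full} outer parallel of a $C^2$ curve up to its cut locus (and is non-increasing across cut-locus times), it can break down once $\Gamma_s$ is \emph{truncated} by $\pa\Omo$. Your assertion that ``truncation can only decrease $|\Gamma_s|_1$ and so strengthens the estimate'' overlooks that truncation simultaneously decreases $A(s)$, i.e.\ the right-hand side. In fact $|\Gamma_s|_1$ can jump \emph{upward}: take $\Omi=B_1$ and let $\Omo$ be $B_R$ with an annular sector $\{\rho_1<|x|<\rho_2,\ |\arg x|<\alpha\}$ (with $1<\rho_1<\rho_2<R$) removed and reconnected to $\pa B_R$ by a thin radial slit of half-width $w\ll\alpha\rho_2$. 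For $s$ just above $\rho_2-1$ one computes $|\Gamma_s|_1\approx 2\pi\rho_2-2w$ while $|\pa\Omi|_1^2+4\pi A(s)\approx 4\pi^2\rho_2^2-4\pi\alpha(\rho_2^2-\rho_1^2)$, so the desired inequality is violated by roughly $4\pi\alpha(\rho_2^2-\rho_1^2)$. Thus $t'(s)>1$ on a set of positive measure and your gradient comparison collapses.

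This is precisely why the paper chooses Hersch's parametrization for the Robin--Neumann (outer-parallel) case: that route requires only Nagy's inequality $|\Gamma_s|_1\le |\pa\Omi|_1+2\pi s$ (Corollary~\ref{cor:Nagy1}), which \emph{is} robust under truncation because truncation genuinely only decreases the left-hand side. The area parametrization you propose is exactly the one the paper uses for the Neumann--Robin (inner-parallel) case (Theorem~\ref{thm:RN1}), where the analogous pointwise estimate does follow from Nagy for inner parallels; it does not transfer to outer parallels.
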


Now we state the corresponding result for the first eigenvalue $\la_1^{\mathcal{NR}}(\Om)$ of the problem
\begin{align}\tag{$\mathcal{NR}$}\label{eq:Robin_Neumann1}
\left\{\begin{aligned}
    -\Delta u &= \lambda u \quad\text{in} \quad\Omega,\\
    \frac{\pa u}{\pa \nu}&=0 \quad\;\;\text{on}\quad  \pa \Omi,\\
    \frac{\pa u}{\pa \nu}+ \ho u&=0 \quad\;\;\text{on}\quad  \pa \Omo.
	\end{aligned}
	 \right.
\end{align}
\begin{theorem}[RFK inequality in the Neumann-Robin case]\label{thm:RN1}
Let $\Omi,\Omo$ be Lipschitz, where $\Omi$ is allowed to be empty.
Also, let $0 \neq \ho \in \R\cup \{+\infty\}$ and $R>r \geq 0$ be such that 
$$
|\Om|=|\A_{r,R}|
\quad \text{and} \quad 
|\pa \Omo|_1=|\pa B_{R}|_1.
$$
Then 
\begin{equation}\label{Bound_NR}
    \la_1^{\mathcal{NR}}(\Om)\leq \la_1^{\mathcal{NR}}(\A_{r,R}).
\end{equation}
Here, in the case $r=0$, we have $\Omega = \A_{0,R} = B_R$ and equality holds in \eqref{Bound_NR}.
\end{theorem}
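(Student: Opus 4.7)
The plan is to adapt the method of interior parallels of \cite{payne} to produce an admissible test function on $\Om$ for the variational characterization \eqref{eq:lambda1} of $\la_1^{\mathcal{NR}}(\Om)$, and to reduce the Rayleigh comparison to a Fiala--Hartman-type isoperimetric inequality for inner parallels of $\pa\Omo$.

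By Lemma~\ref{lem:mono_RN1}, the first eigenfunction of \eqref{eq:Robin_Neumann1} on $\A_{r,R}$ is radial, $u(x) = v(|x|)$, with $v$ strictly monotone on $[r,R]$. For $x \in \Om$ I set
\[
d(x) := \operatorname{dist}(x, \pa\Omo), \qquad A(t) := |\{x \in \Om : d(x) < t\}|, \qquad \rho(t) := \sqrt{R^2 - A(t)/\pi},
\]
and define the test function $w(x) := v(\rho(d(x)))$. The function $d$ is $1$-Lipschitz on $\overline\Om$ with $|\nabla d| = 1$ a.e., and the coarea formula gives $A'(t) = L(t) := |\{x\in\Om : d(x) = t\}|_1$. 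The area constraint $|\Om| = \pi(R^2 - r^2)$ then forces $\rho$ to be a Lipschitz bijection from $[0, T]$ onto $[r,R]$, where $T := \max_{\overline\Om} d$; in particular $w \in W^{1,\infty}(\Om) \subset H^1(\Om)$, and in the Dirichlet case $\ho = +\infty$ the identity $w \equiv v(R) = 0$ on $\pa\Omo$ places $w$ in $\tilde{H}^1(\Om)$.

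Applying the coarea formula and the change of variable $\rho = \rho(t)$ (under which $L(t)\,\dt = -2\pi\rho\,\drh$) produces the exact identities
\[
\int_\Om w^2\,\dx = \int_r^R v(\rho)^2\cdot 2\pi\rho\,\drh = \int_{\A_{r,R}} u^2\,\dx, \qquad \int_\Om |\nabla w|^2\,\dx = \int_r^R v'(\rho)^2 \frac{L(t(\rho))^2}{2\pi\rho}\,\drh,
\]
while $w \equiv v(R)$ on $\pa\Omo$ together with $|\pa\Omo|_1 = 2\pi R$ yields $\int_{\pa\Omo} w^2\,\dS = \int_{\pa B_R} v^2\,\dS$. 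Thus the $L^2$-norm and the boundary integrals on $\Om$ coincide with those on $\A_{r,R}$, and, regardless of the sign of $\ho$, the bound $\la_1^{\mathcal{NR}}(\Om) \leq \la_1^{\mathcal{NR}}(\A_{r,R})$ reduces to the gradient comparison $\int_\Om |\nabla w|^2\,\dx \leq \int_r^R v'(\rho)^2 \cdot 2\pi\rho\,\drh$, equivalently to the pointwise estimate
\[
L(t)^2 + 4\pi A(t) \leq |\pa\Omo|_1^2 \qquad \text{for a.e.\ }t \in (0,T).
\]

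The displayed inequality is the classical Fiala--Hartman estimate for inner parallels of a simply connected planar domain; I would first apply it to $\Omo$ and then transfer it to $\Om = \Omo \setminus \overline{\Omi}$ using $L(t) \leq L_{\Omo}(t)$ and $A(t) \leq A_{\Omo}(t)$, which follow from the fact that removing the closed set $\overline{\Omi}$ only shortens parallels and shrinks crusts. Combining everything and invoking \eqref{eq:lambda1} yields
\[
\la_1^{\mathcal{NR}}(\Om) \leq \frac{\int_\Om |\nabla w|^2\,\dx + \ho \int_{\pa\Omo} w^2\,\dS}{\int_\Om w^2\,\dx} \leq \la_1^{\mathcal{NR}}(\A_{r,R}).
\]
The degenerate case $r = 0$ is trivial, since then $\Omi = \emptyset$ and the constraints together with the classical isoperimetric inequality force $\Om = B_R$, with equality in \eqref{Bound_NR}. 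The principal technical subtlety is justifying the Fiala--Hartman inequality at merely $C^{1,1}$ boundary regularity: I would handle this through a Sard-type selection of a.e.\ $t$ for which $\{d = t\}$ is a regular level set and applying Gauss--Bonnet to each connected component of the crust $\{d < t\}$, or alternatively by approximating $\Omo$ from inside by smooth domains and passing to the limit using the lower semicontinuity of $L$ and the continuity of $A$.
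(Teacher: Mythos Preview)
Your proof is correct and takes essentially the same approach as the paper. Your test function $w(x)=v(\rho(d(x)))$ with $\rho(t)=\sqrt{R^2-A(t)/\pi}$ coincides exactly with the paper's test function $\phi(l(d(x,\pa\Omo)))$ after unravelling the parametrizations (since $L^{-1}(\alpha)=R-\sqrt{R^2-\alpha/\pi}$), and your key pointwise estimate $L(t)^2+4\pi A(t)\le |\pa\Omo|_1^2$ is precisely the inequality $h(\alpha)\le H(\alpha)$ of Lemma~\ref{lem:paramet1} written out; the paper derives it from Nagy's inner-parallel bound $s(\delta)\le |\pa\Omo|_1-2\pi\delta$ (Lemma~\ref{lem:Nagy}\ref{lem:Nagy:1}) together with the monotonicity of $S$, which is an easier route than the Gauss--Bonnet/approximation argument you sketch and already works at Lipschitz regularity.
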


\subsection{Preliminaries for the method of interior parallels}\label{sec:proof_RN}

Hereinafter, we denote by $d(x, K)$ the standard Euclidean distance from a point $x$ to a set $K$. 
\begin{definition}[Parallel sets]\label{def:parallel}
    Let $\de>0$ and $D\subset \R^2$ be a bounded domain. 
    Denote 
        $$
        D_{-\de}:=\{x\in D:~ d(x,\pa D)\geq\de\}
        ~\text{and}~
        D_{\de}:=\{ x\in D^c:~ d(x,\pa D)\geq \de \}^c.
        $$
        The sets $\pa D_{-\de}$ and $\pa D_{\de}$ are called  \textit{inner parallel set} and \textit{outer parallel set} of $D$ at a distance $\de$, respectively.
\end{definition}

Let us recall some classical estimates on the parallel sets, see \cite{Nagy, Makai1959}.
\begin{lemma}[Nagy's inequalities]\label{lem:Nagy}
    Let $D \subset \R^2$ be a bounded, simply connected domain and $D^\#$ be an open disk centered at the origin and satisfying $|\pa D|_1 = |\pa D^\#|_1$. Then the following estimates hold:
    \begin{enumerate}[label={\rm(\roman*)}]
        \item\label{lem:Nagy:1} $|\pa D_{-\de}|_1 \leq  |\pa D|_1-2\pi\de=|\pa D^\#_{-\de}|_1$ for all $\de\in (0,r_D)$, where $r_D$ is the inradius of $D$.  
        \item\label{lem:Nagy:2} $|\pa D_{\de}|_1 \leq  |\pa D|_1+2\pi\de=|\pa D^\#_\de|_1$ for all $\de>0$.
    \end{enumerate}
\end{lemma}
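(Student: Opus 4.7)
The plan is to prove both inequalities by reducing to simply connected polygons and then performing a vertex-by-vertex geometric count, exploiting the Gauss--Bonnet-type identity $\sum_i \theta_i = 2\pi$ for the exterior angles of a simply connected polygon together with the elementary inequality $2\tan(x/2) \geq x$ valid on $(0,\pi)$.

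First, I would approximate the bounded simply connected domain $D$ by simply connected polygons $P_n$ with $|\pa P_n|_1 \to |\pa D|_1$, chosen carefully enough (for instance, inscribed for the inner-parallel case and circumscribed for the outer-parallel case) that the parallel-set inclusions transfer in the appropriate direction. Combined with lower semicontinuity of one-dimensional Hausdorff measure on the boundaries of the resulting parallel sets, this reduces both inequalities to the polygonal case.

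For a simply connected polygon $P$ with edges of lengths $\ell_1,\ldots,\ell_N$ and exterior angles $\theta_1,\ldots,\theta_N$, I would decompose $\pa P_\de$ into: (a) line segments parallel to each edge at outward distance $\de$; and (b) circular arcs of radius $\de$ and opening $\theta_i$ at each convex vertex ($\theta_i > 0$). At each concave vertex ($\theta_i < 0$), the adjacent parallel segments meet and are shortened by $2\de \tan(|\theta_i|/2)$. Summing the contributions and using $\sum_i \theta_i = 2\pi$ together with $2\tan(x/2) \geq x$ gives
\begin{equation*}
|\pa P_\de|_1
= |\pa P|_1 + \sum_{\theta_i > 0} \theta_i\,\de - \sum_{\theta_i < 0} 2\de \tan(|\theta_i|/2)
\leq |\pa P|_1 + 2\pi\de,
\end{equation*}
establishing \ref{lem:Nagy:2}. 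For the inner parallel, the decomposition swaps the roles of convex and concave vertices: circular arcs appear at concave vertices, and cut-offs of length $2\de\tan(\theta_i/2)$ appear at convex vertices. The same trigonometric inequality then yields $|\pa P_{-\de}|_1 \leq |\pa P|_1 - 2\pi\de$, provided $\de < r_P$ so that the inner parallel and its boundary remain well-defined and the local vertex pictures do not interact globally.

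The main obstacle is the rigorous justification of the approximation step: Hausdorff convergence $P_n \to D$ together with $|\pa P_n|_1 \to |\pa D|_1$ does not automatically yield the bound on $|\pa (P_n)_{\pm\de}|_1$ in the direction required. I would handle this by selecting the approximating polygons so that either $(P_n)_\de \supseteq D_\de$ or $(P_n)_{-\de} \subseteq D_{-\de}$ holds, and then invoking lower semicontinuity of one-dimensional Hausdorff measure under Hausdorff convergence of the boundary curves, which are explicit finite unions of straight segments and circular arcs. An alternative route, clean when $\pa D$ is Lipschitz, avoids polygonal approximation entirely: one applies the coarea formula to $x \mapsto d(x,\pa D)$, exploits $|\Gr d| = 1$ almost everywhere, and integrates a Gauss--Bonnet-type identity on generic smooth level sets.
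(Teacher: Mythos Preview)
The paper does not prove this lemma at all; it states it as a classical result and refers to the original papers of Sz.-Nagy and Makai. Your polygonal reduction is precisely the strategy of those classical references, so in spirit you are reproducing the cited proof rather than departing from it.

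Two points in your sketch need more care. First, the explicit formula you write for $|\pa P_\de|_1$ (and the analogous one for the inner parallel) is only valid as long as no edge of $P$ has been completely absorbed by the trimming at an adjacent reflex (resp.\ convex) vertex. Once that happens the combinatorics of $\pa P_{\pm\de}$ changes and your displayed identity no longer holds literally. The usual fix is either to argue inductively on the number of edges across each absorption event, or to exploit the semigroup property of Minkowski addition to propagate the small-$\de$ inequality to all $\de$. You should say which you intend.

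Second, the approximation step you correctly flag as the main obstacle is not resolved by what you wrote. Knowing $(P_n)_\de \supseteq D_\de$ does \emph{not} by itself give $|\pa D_\de|_1 \le |\pa (P_n)_\de|_1$: perimeter is not monotone under set inclusion, even for simply connected sets. Likewise, lower semicontinuity of $\mathcal H^1$ under Hausdorff convergence of boundaries goes in the wrong direction for your purpose. One clean route in the outer case is to observe that $D_\de$ and $(P_n)_\de$ are both $\de$-Minkowski sums, hence have the same ``inner radius of curvature'' $\de$, and to use the fact that the perimeter of such sets is monotone in the convex-hull sense; alternatively, for (ii) one can bypass polygons entirely by noting that $\pa D_\de$ is always a finite union of circular arcs of radius $\de$ plus line segments (for any bounded $D$), and counting directly. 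The inner case (i) genuinely requires more work, and your sketch does not yet provide it.
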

\begin{remark}\label{rem:Nagy}
    By the Steiner formula (cf.\ \cite[Chapter~4]{Schneider}), for any convex domain $D$ we have
    $$
    |\pa D_{\de}|_1 =  |\pa D|_1 + 2\pi\de=|\pa D^\#_\de|_1,
    $$
    which is the equality case in Lemma \ref{lem:Nagy}-\ref{lem:Nagy:2}.
    This shows that Nagy's inequality is not rigid, in general.
\end{remark}

\subsubsection{Parametrizations for the Robin-Neumann problem} 
Assume as in Theorem~\ref{thm:NR1} that $R>r>0$ are such that
$$
|\Om|=|\A_{r,R}| 
\quad \text{and}\quad 
|\pa \Omi|_1=|\pa B_{r}|_1.
$$ 
Introduce the following notation:
\begin{align}
    \label{eq:delta*}
    \delta_* & =\sup\{\delta>0:~ |\pa (\Omi)_\de\cap \Omega|_1 >0 \},\\
    \label{eq:s1}
    s(\delta) & =
   |\pa (\Omi)_\de\cap \Omega|_1  \quad \text{for}~\delta > 0,\\
    \label{eq:S1}
    S(\delta) &=|\pa (B_{r})_\de|_1 = 2\pi (r+\delta) \quad \text{for}~\de >0.
\end{align}
By applying Lemma \ref{lem:Nagy}-\ref{lem:Nagy:2} to $\Omi$, we directly obtain the following result. 
\begin{corollary} \label{cor:Nagy1}
We have
$$
s(\de)\leq |\pa (\Omi)_\de|_1\leq S(\delta) \quad \text{for all } 
\de >0.
$$
\end{corollary}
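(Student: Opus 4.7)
The corollary splits into two inequalities, and my plan is to prove each by a direct appeal to definitions and to Nagy's inequality.

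For the left inequality $s(\delta) \leq |\partial(\Omi)_\delta|_1$, the plan is purely set-theoretic. By definition \eqref{eq:s1}, $s(\delta) = |\partial(\Omi)_\delta \cap \Omega|_1$, which is the one-dimensional measure of a subset of the curve $\partial(\Omi)_\delta$. Monotonicity of Hausdorff measure under inclusion then gives the bound immediately.

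For the right inequality $|\partial(\Omi)_\delta|_1 \leq S(\delta)$, I would invoke Lemma~\ref{lem:Nagy}-\ref{lem:Nagy:2} with $D = \Omi$. Since $\Omi$ is a bounded, simply connected planar domain (this is part of the standing assumptions on $\Omega$ stated at the start of Section~\ref{sec:intro}), Nagy's inequality for outer parallel sets applies. The associated symmetrized disk $D^\#$ has perimeter matching $|\partial\Omi|_1 = |\partial B_r|_1$ by hypothesis, so $D^\# = B_r$ (up to translation). Thus
\begin{equation*}
|\partial(\Omi)_\delta|_1 \leq |\partial\Omi|_1 + 2\pi\delta = |\partial B_r|_1 + 2\pi\delta = 2\pi(r+\delta) = S(\delta),
\end{equation*}
which is exactly the stated bound.

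There is no real obstacle here; the corollary is essentially bookkeeping, combining the trivial monotonicity of length with Nagy's classical outer-parallel bound applied to the inner component $\Omi$. The only point to keep in mind is to record the role of the hypothesis $|\partial\Omi|_1 = |\partial B_r|_1$ so that the disk produced by Nagy's symmetrization coincides with $B_r$, matching the explicit formula for $S(\delta)$ in \eqref{eq:S1}.
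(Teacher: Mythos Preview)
Your proposal is correct and matches the paper's approach exactly: the paper simply says the corollary follows directly by applying Lemma~\ref{lem:Nagy}-\ref{lem:Nagy:2} to $\Omi$, and you have spelled out precisely this application together with the trivial monotonicity observation for the left inequality.
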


Consider the following parametrizations motivated by \textsc{Hersch} \cite{hers} (see also \cite{AnoopMrityunjoy,AnoopAshok}):
\begin{equation}\label{lL_delta}
t(\delta)=\int_{0}^{\delta}\frac{1}{s(\rho)}\drh
~\text{ for}~ \de\in (0,\delta^*)
\quad\text{and}\quad
T(\delta)=\int_{0}^{\delta}\frac{1}{S(\rho)}\drh
~\text{ for}~ \de\in (0,R-r].
\end{equation}
Since the functions $s$ and $S$ are positive in $(0,\delta^*)$ and $(0,+\infty)$, respectively, $t$ and $T$ are strictly increasing. 
It is also clear that $T \in C^1(0,R-r)$ and $t$ is absolutely continuous in $(0,\de^*)$, and hence $t'$ is at least in $L^1_{\text{loc}}(0,\de^*)$. 
Let us denote 
$$
t_*
=
\lim_{\delta \to \delta_*}
t(\delta) 
=
\sup_{\de \in (0,\de_*)} t(\de)
\quad \text{and} \quad 
T_{\#}=T(R-r) = \max_{\de \in (0,R-r]}T(\de), 
$$ 
and define
\begin{align}
    \label{eq:g}
    g(\alpha) &=s(t^{-1}(\alpha)) \quad\text{for}~\alpha \in [0,t_*],\\
    \label{eq:G}
    G(\alpha) &=S(T^{-1}(\alpha))
    \quad \text{for}~\alpha \in [0,T_{\#}].
\end{align}
In view of Corollary \ref{cor:Nagy1}, 
the parametrizations $t$ and $T$ satisfy the following properties, and we refer to \cite{AnoopAshok,AnoopMrityunjoy} for a detailed proof of these statements.
\begin{lemma}\label{lem:paramet}
 The following assertions hold:
    \begin{enumerate}[label={\rm(\roman*)}]
        \item\label{lem:paramet:1} $R-r\leq \de_*$ and $T_{\#}\leq t_*$.        
        \item\label{lem:paramet:2} $g(\alpha)\leq G(\alpha)$ for all $\alpha \in [0,T_\#]$. Moreover,
$$
|\Omega|=\int_{0}^{t_*}g(\alpha)^{2}\da=\int_{0}^{T_\#}G(\alpha)^{2}\da=|\A_{r,R}|.
$$
\end{enumerate}
\end{lemma}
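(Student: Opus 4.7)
The plan is to derive both parts from the classical coarea formula applied to the $1$-Lipschitz function $d(\cdot, \overline{\Omi})$ on $\Omega$, combined with the elementary fact that strictly increasing reparametrizations invert monotonically. The core identity is
$$
|\Omega| \;=\; \int_{0}^{\delta_{*}} s(\rho)\,\drh,
$$
which follows once one identifies, for $\mathcal{H}^{1}$-almost every $\delta \in (0,\delta_{*})$, the level curve $\{x \in \Omega : d(x,\overline{\Omi})=\delta\}$ with $\partial (\Omi)_{\delta} \cap \Omega$ up to a null set, and integrates $|\nabla d|=1$ via the coarea formula.

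For part \ref{lem:paramet:1}, I would combine the identity above with the upper bound $s(\rho) \leq S(\rho) = 2\pi(r+\rho)$ from Corollary~\ref{cor:Nagy1} to get
$$
\pi(R^{2}-r^{2}) \;=\; |\Omega| \;\leq\; \int_{0}^{\delta_{*}} 2\pi(r+\rho)\,\drh \;=\; 2\pi r\,\delta_{*} + \pi\delta_{*}^{2}.
$$
Since $x \mapsto 2\pi r x + \pi x^{2}$ is strictly increasing for $x>0$ and equals $\pi(R^{2}-r^{2})$ at $x=R-r$, this forces $\delta_{*}\geq R-r$. In particular $t(R-r)$ is defined, and inverting $s\leq S$ to $1/s \geq 1/S$ and integrating on $(0,R-r)$ yields $T_{\#}=T(R-r)\leq t(R-r)\leq t_{*}$.

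For part \ref{lem:paramet:2}, fix $\alpha \in [0,T_{\#}]$ and set $\delta_{1}=t^{-1}(\alpha)$, $\delta_{2}=T^{-1}(\alpha)$. The pointwise bound $t\geq T$ inverts to $\delta_{1}\leq \delta_{2}$, whence by Corollary~\ref{cor:Nagy1} and the monotonicity of $S$,
$$
g(\alpha) \;=\; s(\delta_{1}) \;\leq\; S(\delta_{1}) \;\leq\; S(\delta_{2}) \;=\; G(\alpha).
$$
The change of variables $\rho = t^{-1}(\alpha)$, $\da = \drh/s(\rho)$, together with the core identity, produces
$$
\int_{0}^{t_{*}} g(\alpha)^{2}\,\da \;=\; \int_{0}^{\delta_{*}} s(\rho)\,\drh \;=\; |\Omega|,
$$
and the analogous substitution $\rho = T^{-1}(\alpha)$ on $(0,R-r)$ yields $\int_{0}^{T_{\#}} G(\alpha)^{2}\,\da = \int_{0}^{R-r} 2\pi(r+\rho)\,\drh = \pi(R^{2}-r^{2}) = |\A_{r,R}|$.

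The main technical obstacle is the rigorous justification of the core coarea identity when $\Omi$ is only Lipschitz. The distance function $d(\cdot,\overline{\Omi})$ satisfies $|\nabla d|=1$ almost everywhere, but its level sets may differ from $\partial (\Omi)_{\delta}\cap \Omega$ on the cut locus, so one must argue that this discrepancy is $\mathcal{H}^{1}$-negligible for almost every $\delta$. The Lipschitz regularity of $\Omi$ ensures rectifiability of the parallel boundaries and, together with standard results on distance functions, suffices to identify the two $\mathcal{H}^{1}$-measurable curves almost everywhere; this is where the detailed arguments of \cite{AnoopAshok,AnoopMrityunjoy} are to be invoked.
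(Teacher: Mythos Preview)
Your proposal is correct and follows the standard route: coarea formula for the distance function, Nagy's inequality (Corollary~\ref{cor:Nagy1}) to compare $s$ with $S$, and the resulting monotonicity of $t$ versus $T$ to handle the inverses. The paper itself does not prove this lemma but defers to \cite{AnoopAshok,AnoopMrityunjoy}, where precisely this argument is carried out; your outline matches that approach, including your identification of the one genuinely delicate point (the coarea identity $|\Omega|=\int_0^{\delta_*} s(\rho)\,\drh$ under mere Lipschitz regularity).

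One minor remark: in part~\ref{lem:paramet:1} you write ``$t(R-r)$ is defined'', but the domain of $t$ is the open interval $(0,\delta_*)$, so in the borderline case $R-r=\delta_*$ this needs the limiting interpretation $t(R-r)=t_*$. Since what you actually need is $T_\# \leq t_*$, it is cleaner to argue directly: for every $\delta<R-r\leq\delta_*$ one has $T(\delta)\leq t(\delta)<t_*$, and letting $\delta\uparrow R-r$ gives $T_\#\leq t_*$ by continuity of $T$.
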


\medskip
\subsubsection{Parametrizations for the Neumann-Robin problem}
Let $\Omega$ and $\A_{r,R}$ be as in Theorem \ref{thm:RN1}, so that
$$
|\Om|=|\A_{r,R}| 
\quad \text{and}\quad 
|\pa \Omo|_1=|\pa B_{R}|_1.
$$
Define
\begin{align*}
    \de^* &=\sup\{\de>0:~ |\pa (\Omo)_{-\de} \cap \Om|_1>0\},\\
    s(\de) &= |\pa (\Omo)_{-\de} \cap \Om|_1 \quad\text{for}~ \de>0,\\
    S(\de) &= |\pa (B_R)_{-\de}|_1 \quad\text{for}~ \de>0.
\end{align*}
Evidently, $S(\de) = 2\pi(R-\de)$ for $\de \in (0,R]$. 
One can observe that $(\Omo)_{-\de}=\emptyset$ for $\de>r_{\Omo}$, where $r_{\Omo}$ is the inradius of $\Omo$, see, e.g., \cite[p.~148]{Schneider}. Therefore, we immediately get $\de^*\leq r_{\Omo}$. 
Using Lemma \ref{lem:Nagy}-\ref{lem:Nagy:1} for $\Omo$, we obtain the following consequence.
\begin{corollary}\label{cor:Nagy2}
    We have 
    $$
    s(\de)\leq |\pa (\Omo)_{-\de}|_1\leq S(\delta) \quad \text{for any}~ \de\in (0,\de^*).
    $$
\end{corollary}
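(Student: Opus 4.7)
The plan is to verify the two inequalities of Corollary~\ref{cor:Nagy2} separately, in direct analogy with the proof of Corollary~\ref{cor:Nagy1}, but invoking the inner-parallel version of Nagy's inequality (Lemma~\ref{lem:Nagy}-\ref{lem:Nagy:1}) applied to $\Omo$ instead of the outer-parallel version applied to $\Omi$. Both halves are essentially immediate once the correct hypotheses are lined up.

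For the left inequality $s(\de)\leq |\pa (\Omo)_{-\de}|_1$, I would just note that, by the very definition of $s(\de)$, the set $\pa (\Omo)_{-\de}\cap \Om$ sits inside $\pa (\Omo)_{-\de}$ as a subset of the plane, so monotonicity of the one-dimensional Hausdorff measure under this inclusion finishes the job.

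For the right inequality $|\pa (\Omo)_{-\de}|_1\leq S(\de)$, I would apply Lemma~\ref{lem:Nagy}-\ref{lem:Nagy:1} to the bounded, simply connected domain $D=\Omo$. The standing hypothesis of Theorem~\ref{thm:RN1} ensures $|\pa \Omo|_1 = |\pa B_R|_1$, so the associated comparison disk $D^\#$ is exactly $B_R$. Lemma~\ref{lem:Nagy}-\ref{lem:Nagy:1} then gives
$$
|\pa (\Omo)_{-\de}|_1 \;\leq\; |\pa \Omo|_1 - 2\pi\de \;=\; 2\pi R - 2\pi\de \;=\; 2\pi(R-\de) \;=\; S(\de),
$$
for every $\de\in(0,r_{\Omo})$. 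Combined with the observation $\de^*\leq r_{\Omo}$ already noted in the paragraph preceding the corollary, this covers the full range $\de\in(0,\de^*)$ required by the statement.

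I do not anticipate any genuine obstacle: the corollary is a direct bookkeeping application of already-established tools. The only point demanding minor care is checking that the hypotheses of Nagy's inequality (namely, that $\Omo$ is a bounded simply connected planar domain, that $\de$ stays strictly below the inradius $r_{\Omo}$, and that the comparison disk has been correctly identified via the boundary-length constraint) are all in force here, each of which is either part of the setting or was verified in the discussion immediately before the corollary.
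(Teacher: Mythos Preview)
Your proposal is correct and follows precisely the approach indicated in the paper, which simply records the corollary as an immediate consequence of Lemma~\ref{lem:Nagy}-\ref{lem:Nagy:1} applied to $\Omo$. Your write-up merely spells out the trivial left inequality and the checking of hypotheses (identification of $D^\#=B_R$ via $|\pa\Omo|_1=|\pa B_R|_1$, and the range restriction $\de^*\leq r_{\Omo}$) that the paper leaves implicit.
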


Consider the following parametrizations inspired by \textsc{Payne \& Weinberger} \cite{payne} (see \cite{AnoopMrityunjoy,AnoopAshok} for higher dimensional generalizations):
\begin{equation}\label{tT_delta}
l(\delta)=\displaystyle\int_{0}^{\delta}s(\rho)\drh 
\quad\text{and}\quad
L(\delta)=\int_{0}^{\delta}S(\rho)\drh.
\end{equation}
Geometrically, $l(\de)$ represents the area of a subset of $\Omega$ enclosed between $\pa \Omo$ and $\pa (\Omo)_{-\de}$, and a similar interpretation holds for $L(\de)$. 
The functions $l$ and $L$ are increasing. 
Moreover, applying Corollary~\ref{cor:Nagy2}, one has
$$
l(\de)\leq L(\de) ~\text{ for all}~ 
\de\in (0,\de^*).
$$
From the definitions of $l$ and $L$, it also follows that 
$$
l(\de^*)=|\Om|=|\A_{r,R}|=L(R-r).
$$
Now we define 
\begin{equation}
    h(\al)=s(l^{-1}(\al)) 
    \quad\text{and}\quad 
    H(\al)=S(L^{-1}(\al)) 
    ~\text{ for}~\al\in [0,|\Om|].
\end{equation}
Using Corollary \ref{cor:Nagy2} together with the monotonicity properties of $l$ and $L$ mentioned above, we can establish the following properties of $h$ and $H$, which are analogous to those in Lemma \ref{lem:paramet}, see, e.g., \cite{AnoopAshok,AnoopMrityunjoy}. 
\begin{lemma}\label{lem:paramet1}
The following assertions hold:
    \begin{enumerate}[label={\rm(\roman*)}]
        \item $R-r \leq \de^*$. 
        \item $h(\al)\leq H(\al)$ for all $\al\in [0,|\Om|]$.
    \end{enumerate}
\end{lemma}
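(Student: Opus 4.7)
The plan is to mimic the proof of the analogous Lemma~\ref{lem:paramet}, whose two essential ingredients—the pointwise inequality $s(\de) \leq S(\de)$ on $(0,\de^*)$ from Corollary~\ref{cor:Nagy2}, together with the monotonicity of the parametrizations $l$, $L$ and the fact that $S(\de) = 2\pi(R-\de)$ is strictly decreasing on $[0,R]$—transfer to the present Neumann-Robin setting with only minor adjustments.

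For part (i), I would argue by cases on whether $\de^*$ exceeds $R$ or not. If $\de^* \leq R$, integrating the inequality of Corollary~\ref{cor:Nagy2} from $0$ to $\de^*$ yields
\begin{equation*}
|\Om| \;=\; l(\de^*) \;=\; \int_0^{\de^*} s(\rho)\drh \;\leq\; \int_0^{\de^*} S(\rho)\drh \;=\; L(\de^*).
\end{equation*}
Since $L$ is strictly increasing on $[0,R]$ with $L(R-r) = |\A_{r,R}| = |\Om|$, this forces $\de^* \geq R-r$. If instead $\de^* > R$, the conclusion $\de^* > R > R-r$ is immediate.

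For part (ii), I would fix $\al \in (0,|\Om|]$ and set $\de_1 := l^{-1}(\al) \in (0,\de^*]$ and $\de_2 := L^{-1}(\al) \in (0,R-r]$. The intermediate claim I would establish is $\de_2 \leq \de_1$: when $\de_1 \leq R-r$, this follows from $L(\de_2) = \al = l(\de_1) \leq L(\de_1)$ combined with the strict monotonicity of $L$ on $[0,R]$; when $\de_1 > R-r$, it is immediate. Applying Corollary~\ref{cor:Nagy2} and the monotonicity of $S$ then yields
\begin{equation*}
h(\al) \;=\; s(\de_1) \;\leq\; S(\de_1) \;\leq\; S(\de_2) \;=\; H(\al),
\end{equation*}
with the endpoint $\al = 0$ giving equality $h(0) = H(0) = |\pa\Omo|_1 = 2\pi R$.

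I do not foresee a substantial obstacle: both parts are essentially bookkeeping of Nagy's inequality transported through the parametrizations, and the argument parallels the one in \cite{AnoopAshok,AnoopMrityunjoy} already invoked for Lemma~\ref{lem:paramet}. The only mild subtlety is handling the degenerate range where $\de_1 > R$ (which could in principle occur for non-convex $\Omo$), in which case $S(\de_1)$ is not meaningful; this is dispatched by observing that the defining inequality $s \leq 2\pi(R - \de)$ of Corollary~\ref{cor:Nagy2} forces $s(\de_1) = 0$ there, so the inequality $h(\al) \leq H(\al)$ is trivial.
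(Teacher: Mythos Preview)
Your proposal is correct and follows exactly the approach the paper indicates: it uses Corollary~\ref{cor:Nagy2} together with the monotonicity of $l$, $L$, and $S$, mirroring the argument referenced for Lemma~\ref{lem:paramet}. One small remark: your worry about the range $\de_1 > R$ is moot, since the paper already records (just before Corollary~\ref{cor:Nagy2}) that $\de^* \leq r_{\Omo}$, and the isoperimetric inequality combined with $|\pa\Omo|_1 = 2\pi R$ gives $r_{\Omo} \leq R$, so $\de_1 \leq \de^* \leq R$ always.
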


\medskip
With the auxiliary results of this section in hand, we proceed with the proof of Theorem~\ref{thm:NR1}. 

\subsection{Proof of Theorem \ref{thm:NR1}}
 The case $\hi=+\infty$ is proved in \cite{hers}, and hence we assume $\hi \in \mathbb{R} \setminus \{0\}$. 
 Let $u$ be a positive first eigenfunction of \eqref{eq:Neumann_Robin1} in $\A_{r,R}$; the normalization of $u$ is irrelevant for our analysis.  
 Since $u$ is radially symmetric and positive in $\overline{\A_{r,R}}$ (see Lemma~\ref{lem:mono_NR1}), we can find a smooth positive function $w: (0,R-r)\rightarrow (0,+\infty)$ satisfying the following relation:
\begin{equation}
    u(x)=w(|x|-r) \equiv w(d(x,\pa B_{r})) ~\text{ for}~ x\in\A_{r,R}. 
\end{equation}
Now using the definition \eqref{lL_delta} of the function $T$, we rewrite $u$ in the following way:
\begin{equation}\label{eq:uphi}
u(x)=(w\circ T^{-1})(T(d(x,\pa B_{r})))=\phi (T(d(x,\pa B_{r}))) ~\text{ for}~ x\in\A_{r,R},
\end{equation}
where $\phi:=w\circ T^{-1}$.

We will investigate the cases $\hi>0$ and $\hi<0$ separately.

\underline{Case $\hi>0$.}~ 
Since $u$ is strictly increasing (see Lemma \ref{lem:mono_NR1}-\ref{lem:mono_NR1:1}) and so are $T$ and $d(\cdot,\pa B_{r})$, we see that $\phi$ is strictly increasing in $[0,T_\#]$ and 
\begin{align}
\phi (0) & =\displaystyle \min_{[0,T_\#]} \phi=\min_{\overline{\A_{r,R}}} u:=u_{\rm{min}},\\
\label{eq:phi:incr}
\phi (T_\#) & =\displaystyle \max_{[0,T_\#]} \phi=\max_{\overline{\A_{r,R}}} u:=u_{\rm{max}}.
\end{align}
With the help of the function $t$ (see \eqref{lL_delta}), we define a function $v$ on $\Om$ as follows:
\begin{equation}\label{eq:v}
 		v(x)= \begin{cases}
 		 \phi(t(d(x, \pa \Omi))),& \mbox{if }~ t(d(x, \pa \Omi)) \in [0,T_\#], \\
 		u_{\rm{max}}, & \mbox{if }~ t(d(x, \pa \Omi)) \in (T_\#,t_*]. 
 		\end{cases}
\end{equation}
Later we will justify that $v \in H^1(\Omega)$. 
From the construction, it is clear that 
\begin{align}
\label{eq:v:min}
v_{\rm{min}} &:=\displaystyle\min_{\overline{\Om}}v=\min_{\pa \Omi} v=u_{\rm{min}},\\
\label{eq:v:max}
v_{\rm{max}} &:=\displaystyle\max_{\overline{\Om}}v=\max_{\pa \Omo} v=u_{\rm{max}}.
\end{align}
Since $\phi$ is smooth, $t$ is absolutely continuous, and $d(\cdot, \pa \Omi)$ is Lipschitz, we conclude that $v$ has a weak gradient and $|\nabla v|$ is measurable. 
Noting that $|\nabla d(\cdot, \partial\Omi)| = 1$ a.e.\ in $\Omega$ and $t'(\delta) = 1/s(\delta)$, we have 
$$
|\nabla v(x)| = |\phi'(t(d(x, \pa \Omi)))| \, \frac{1}{s(d(x, \pa \Omi))}
\quad \text{for}~ t(d(x, \pa \Omi)) \in [0,T_\#],
$$ 
and $|\nabla v(x)| = 0$ for $t(d(x, \pa \Omi)) \in (T_\#,t_*]$.
Therefore, applying the co-area formula, we get
\begin{align}
    \int_{\Om}|\nabla v(x)|^2\dx 
    =
    \int_{\Om}|\nabla v(x)|^2 |\nabla d(x, \partial\Omi)|\dx 
    &=\int_{0}^{\infty}\int_{(\pa(\Omi)_\de)\cap \Om}|\nabla v(x)|^2\dsi(x) \dd\\
    \label{eq:coar1}
    &
    =\int_{0}^{t^{-1}(T_\#)}|\phi'(t(\delta))|^2 \frac{1}{s(\delta)}\dd.
    \quad
 \end{align}
After changing the variable $t(\delta)=\alpha$ so that 
$$
\delta = t^{-1}(\alpha)
\quad \text{and} \quad
\dd = s(t^{-1}(\alpha)) \, \da, 
$$
we get from \eqref{eq:coar1} that
\begin{equation}\label{norm:gradv}
    \int_{\Om}|\nabla v(x)|^2\dx=\int_{0}^{T_\#}|\phi'(\alpha)|^2\da.
\end{equation}
In view of the relation \eqref{eq:uphi}, 
arguing in much the same way as above, we also obtain
\begin{equation}\label{norm:gradu}
    \int_{\A_{r,R}}|\nabla u(x)|^2\dx=\int_{0}^{T_\#}|\phi'(\alpha)|^2\da,
\end{equation}
which yields
\begin{equation}\label{norm:gradugradv}
    \int_{\Om}|\nabla v(x)|^2\dx
    =
    \int_{\A_{r,R}}|\nabla u(x)|^2\dx.
\end{equation}

On the other hand, performing similar calculations, we get 
\begin{align}
\label{eq:v-l2}
    \int_{\Om} v(x)^2\dx & =
    \int_{0}^{T_\#}|\phi(\al)|^2 g(\alpha)^2\da+ u_{\rm{max}}^2\int_{T_\#}^{t_*}g(\alpha)^2\da,\\
\label{eq:v-l2:1}
    \int_{\A_{r,R}}u(x)^2\dx & =\int_{0}^{T_\#}|\phi(\al)|^2 G(\alpha)^2\da,
\end{align}
where $g, G$ are defined in \eqref{eq:g}, \eqref{eq:G}, respectively. 
Observe that by Lemma~\ref{lem:paramet} we have
\begin{equation}\label{eq:v-l2:2}
\int_{\Om} v(x)^2\dx 
\leq 
\int_{0}^{T_\#}|\phi(\al)|^2 G(\alpha)^2\da+ u_{\rm{max}}^2\int_{0}^{t_*}g(\alpha)^2\da
=
\int_{\A_{r,R}} u(x)^2\dx + u_{\rm{max}}^2 |\Omega|,
\end{equation}
which in combination with  \eqref{norm:gradugradv} implies that $v\in H^1(\Om)$.

Further recalling that $\phi \leq u_{\rm{max}}$ in $[0,T_\#]$ (see \eqref{eq:phi:incr}) and using Lemma~\ref{lem:paramet}, we obtain 
\begin{align}
    \int_{\Om} v(x)^2\dx-\int_{\A_{r,R}}u(x)^2\dx & \geq u_{\rm{max}}^2 \int_{0}^{T_\#} \left\{g(\alpha)^2-G(\alpha)^2\right\}\da+u_{\rm{max}}^2\int_{T_\#}^{t_*}g(\alpha)^2\da\\
    & = u_{\rm{max}}^2 \int_{0}^{t_*} g(\alpha)^2\da - u_{\rm{max}}^2 \int_{0}^{T_\#} G(\alpha)^2 \da = 0,\\
    \text{i.e.,}~\int_{\Om} v(x)^2\dx & \geq \int_{\A_{r,R}} u(x)^2\dx.
    \label{estimate:L2}
\end{align}
Using the radial monotonicity of $u$ in $\A_{r,R}$, the property \eqref{eq:v:min} of $v$, and the assumption $|\pa B_{r} |_1=|\pa \Omi |_1$, we deduce that
\begin{equation}\label{estimate:bd}
    \int_{\pa B_{r}} u(x)^2\dsi =u_{\rm{min}}^2|\pa B_{r} |_1= v_{\rm{min}}^2|\pa \Omi |_1
    =
    \int_{\pa \Omi} v(x)^2\dsi.
\end{equation}

Since $v$ is a valid test function for the definition \eqref{eq:lambda1} of $\lambda_1^{\mathcal{RN}}(\Omega)$, 
using \eqref{norm:gradugradv}, \eqref{estimate:L2}, and \eqref{estimate:bd}, 
we get the desired inequality $\lambda_1^{\mathcal{RN}}(\Omega) \leq \lambda_1^{\mathcal{RN}}(\A_{r,R})$ as follows:
$$
\lambda_1^{\mathcal{RN}}(\Omega) 
\leq 
\frac{\int_{\Om}|\nabla v|^2\dx+\hi \int_{\pa \Omi} v^2\dsi}{\int_{\Om} v^2\dx}
\leq
\frac{\int_{\A_{r,R}}|\nabla u|^2\dx+\hi \int_{\pa B_r} u^2\dsi}{\int_{\A_{r,R}} u^2\dx}
=
\lambda_1^{\mathcal{RN}}(\A_{r,R}).
$$

\underline{Case $\hi<0$.}~ 
The proof of this case will be in the same vein as above, with appropriate modifications in the definition of the test function $v$. 
Since $\hi<0$, the positive first eigenfunction $u$ of \eqref{eq:Neumann_Robin1} in $\A_{r,R}$ is strictly radially decreasing  (see Lemma~\ref{lem:mono_NR1}-\ref{lem:mono_NR1:2}), and hence it attains its maximum on the Robin boundary $\pa B_{r}$ and minimum on the Neumann boundary $\pa B_{R}$.  
Therefore, the function
$\phi$ defined in \eqref{eq:uphi} is strictly decreasing in $[0,T_\#]$ and 
\begin{align}
\phi (0) & =\displaystyle \max_{[0,T_\#]} \phi=\max_{\overline{\A_{r,R}}} u:=u_{\rm{max}},\\
\phi (T_\#) & =\displaystyle \min_{[0,T_\#]} \phi=\min_{\overline{\A_{r,R}}} u:=u_{\rm{min}}.
\end{align}
Inspired by these facts, we construct a test function for the variational characterization \eqref{eq:lambda1} of $\lambda_1^{\mathcal{RN}}(\Omega)$ that obeys similar monotonicity in $\Om$ along the parallel sets. 
Precisely, we define the test function $v$ in $\Om$ as 
\begin{equation*}
 		v(x)= \begin{cases}
 		 \phi(t(d(x, \pa \Omi))),& \mbox{if }~ t(d(x, \pa \Omi)) \in [0,T_\#], \\
 		u_{\rm{min}}, & \mbox{if }~ t(d(x, \pa \Omi)) \in (T_\#,t_*]. 
 		\end{cases}
 		\end{equation*}
One can easily verify that 
\begin{align}
v_{\rm{min}} &:=\displaystyle\min_{\overline{\Om}}v=\min_{\pa \Omi} v=u_{\rm{min}},\\
v_{\rm{max}} &:=\displaystyle\max_{\overline{\Om}}v=\max_{\pa \Omo} v =u_{\rm{max}}.
\end{align}
Similarly to the previous case, we obtain
\begin{align}\label{grad_2}
    \int_{\Om}|\nabla v(x)|^2\dx &=\int_{\A_{r,R}}|\nabla u(x)|^2\dx,\\
    \label{boundary_2}
    \int_{\pa B_{r}} u(x)^2\dsi 
    =u_{\rm{max}}^2|\pa B_{r} |_1 & = v_{\rm{max}}^2|\pa \Omi |_1=\int_{\pa \Omi} v(x)^2\dsi.
\end{align}
Recalling that $\phi \geq u_{\rm{min}}$ in $[0,T_\#]$ and using Lemma~\ref{lem:paramet}-\ref{lem:paramet:2}, we derive 
\begin{align}
     \int_{\Om} v(x)^2\dx-\int_{\A_{r,R}}u(x)^2\dx &= \int_{0}^{T_\#}|\phi(\al)|^2 \left\{g(\alpha)^2-G(\al)^2\right\}\da+ u_{\rm{min}}^2\int_{T_\#}^{t_*}g(\alpha)^2\da\\
     & \leq u_{\rm{min}}^2 \left\{\int_{0}^{t_*} g(\alpha)^2\da -\int_{0}^{T_\#} G(\alpha)^2 \da\right\}=0,\\
     \text{i.e.,}~ \int_{\Om} v(x)^2\dx & \leq \int_{\A_{r,R}}u(x)^2\dx.\label{L2}
\end{align}

Using the fact that $\la_1^{\mathcal{RN}}(\A_{r,R})<0$ when $\hi<0$ (see Remark~\ref{rem:sign}) and the expressions \eqref{grad_2}, \eqref{boundary_2}, \eqref{L2},  we obtain the desired inequality as follows:
$$
\lambda_1^{\mathcal{RN}}(\Omega) 
\leq 
\frac{\int_{\Om}|\nabla v|^2\dx+\hi \int_{\pa \Omi} v^2\dsi}{\int_{\Om} v^2\dx}
\leq
\frac{\int_{\A_{r,R}}|\nabla u|^2\dx+\hi \int_{\pa B_r} u^2\dsi}{\int_{\A_{r,R}} u^2\dx}
=
\lambda_1^{\mathcal{RN}}(\A_{r,R}).
$$
The proof of Theorem~\ref{thm:NR1} is complete. 
\qed

\medskip
Next, we prove of Theorem \ref{thm:RN1}. The scheme of the arguments closely follows that of Theorem~\ref{thm:NR1} and, therefore, we only provide an outline here.

\subsection{Proof of Theorem \ref{thm:RN1}} 
As before, the case $\ho=+\infty$ is proved in \cite{payne}, and so we restrict our attention to the remaining case $\ho\in \R\setminus \{0\}$. Let $u$ be a  positive first eigenfunction of \eqref{eq:Robin_Neumann1} in $\A_{r,R}$. According to Lemma \ref{lem:mono_RN1}, $u$ is radially symmetric and positive throughout $\overline{\A_{r,R}}$. 
Hence, we define a smooth positive function $w:(0,R-r)\rightarrow (0,+\infty)$ such that 
$$
u(x)=w(R-|x|)=w(d(x,\pa B_R))~\text{ for}~x\in \A_{r,R}.
$$
Using the definition \eqref{tT_delta} of the function $L$, one can rewrite the above relation as 
\begin{equation}
    \label{eq:uphi-2}
u(x)=(w\circ L^{-1})(L(d(x,\pa B_{R})))=\phi (L(d(x,\pa B_{R})))~\text{ for}~x\in\A_{r,R},
\end{equation}
where $\phi:= w\circ L^{-1}$.
Let us observe that since $u$ is strictly decreasing (resp., increasing) if $\ho>0$ (resp., $\ho<0$), $L$ is strictly increasing, and $d(\cdot,\pa B_{R})$ is strictly decreasing, we conclude that $\phi$ has a strict monotonicity opposite to that of $u$, according to the sign of $\ho$. 

Now we construct a test function $v$ for the variational characterization \eqref{eq:lambda1} of $\lambda_1^{\mathcal{NR}}(\Omega)$ as follows:
\begin{equation}
    v(x)=\phi (l(d(x,\pa \Omo)))
    \quad \text{for}~x\in \Om,
\end{equation}
where the function $l$ is defined in \eqref{tT_delta}.
Then, in the same vein as in the proof of Theorem~\ref{thm:NR1} and using Lemma~\ref{lem:paramet1}, we can derive the following relations:
\begin{gather}
    \int_{\Om}|\nabla v(x)|^2\dx =\int_{0}^{|\Om|}|\phi'(\alpha)|^2 h(\al)^2\da  \leq \int_{0}^{|\A_{r,R}|}|\phi'(\alpha)|^2 H(\al)^2 \da=\int_{\A_{r,R}}|\nabla u(x)|^2\dx,\\
\int_{\Om}v(x)^2\dx =\int_{0}^{|\Om|} \phi(\alpha)^2 \da=\int_{\A_{r,R}} u(x)^2\dx,\\
\int_{\pa \Omo} v(x)^2\dsi  =\left(\min_{\overline{\Om}}v\right)^2  |\pa \Omo|_1 =\left(\min_{\overline{\A_{r,R}}}u\right)^2|\pa B_R|_1=\int_{\pa B_R}u(x)^2\dsi, ~\text{ if}~\ho>0,\\
\int_{\pa \Omo} v(x)^2\dsi =\left(\max_{\overline{\Om}}v\right)^2 |\pa \Omo|_1 =\left(\max_{\overline{\A_{r,R}}}u\right)^2|\pa B_R|_1=\int_{\pa B_R}u(x)^2\dsi,~\text{ if}~\ho<0.
\end{gather}
We obtain the desired conclusion by applying these relations in the variational characterization \eqref{eq:lambda1} of $\la_1^{\mathcal{NR}}(\Om)$.
\qed

\subsection{Proof of Theorem~\ref{thm:RR} in the case \texorpdfstring{$\hi \cdot \ho = 0$}{h\_out*h\_in=0}} 
This case is covered by Theorem~\ref {thm:NR1} (for $\ho = 0$) and Theorem~\ref{thm:RN1} (for $\hi = 0$).
\qed

\section{Robin-Robin case}\label{sec:proof-RR}

The aim of this section is to prove Theorem~\ref{thm:RR} in the case $\hi \cdot \ho > 0$. 
In the following three subsections, we provide a required auxiliary material, and conclude the proof of Theorem~\ref{thm:RR} in the last subsection. 

In what follows, we always assume that $\hi,\ho \in \mathbb{R} \cup \{+\infty\}$ and $\hi \cdot \ho > 0$, and that $u$ is a positive first eigenfunction of \eqref{cutproblem2d} in $\Omega$; the normalization of $u$ is irrelevant for our analysis. 
Recall that $\lambda_1^{\mathcal{RR}}(\Omega) \neq 0$ by Remark~\ref{rem:sign}. 

\subsection{Critical points}\label{section:critpoints}
    Recall from Section~\ref{sec:intro} that $\Omega$ is of class $C^{1,1}$,  $u$ is a (real) analytic function in $\Omega$, and $u \in C^{1}(\overline{\Omega})$.
    We have the following two simple results. 
    \begin{lemma}\label{lem:critonboundary}
        Let $z_0 \in \partial\Om$. 
        If $\hi,\ho > 0$, then $\partial u(z_0)/\partial \nu < 0$, while if $\hi,\ho<0$, then $\partial u(z_0)/\partial \nu > 0$. 
        In particular, $u$ does not have  critical points on $\pa\Om$.
    \end{lemma}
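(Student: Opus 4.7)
My plan is to use the boundary conditions of \eqref{cutproblem2d} together with Hopf's lemma, exploiting the fact that the first eigenfunction $u$ is strictly positive in $\Omega$ (noted in the introduction from the simplicity of $\lambda_1$ and the maximum principle).

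First I would establish that $u > 0$ on the entire boundary $\partial\Omega$. On any component where the boundary condition is Dirichlet (i.e.\ $h=+\infty$), we have $u=0$, and this case is handled directly at step three below. On any component where $h\in\mathbb{R}\setminus\{0\}$, suppose for contradiction that $u(z_0)=0$ for some $z_0$ on that component. Since $\partial\Omega\in C^{1,1}$, the interior sphere condition holds at $z_0$. Writing the eigenvalue equation as $\Delta u = -\lambda_1^{\mathcal{RR}}(\Omega)\, u$ with bounded right-hand side and $u>0$ in $\Omega$, Hopf's lemma yields $\partial u(z_0)/\partial\nu<0$. But the Robin boundary condition forces $\partial u(z_0)/\partial\nu = -h\,u(z_0) = 0$, a contradiction. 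Hence $u>0$ on every Robin component of $\partial\Omega$.

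With positivity of $u$ on the Robin part of $\partial\Omega$ in hand, the sign statement is immediate. Fix $z_0\in\partial\Omega$ and let $h\in\{\hi,\ho\}$ denote the corresponding Robin parameter. If $h\in\mathbb{R}\setminus\{0\}$, then $\partial u(z_0)/\partial\nu = -h\,u(z_0)$ with $u(z_0)>0$, so $\partial u(z_0)/\partial\nu<0$ if $h>0$ and $\partial u(z_0)/\partial\nu>0$ if $h<0$, matching the conclusion in both the $\hi,\ho>0$ and $\hi,\ho<0$ cases. If instead $h=+\infty$ (which is admissible only in the $\hi,\ho>0$ branch of the hypothesis), then $u(z_0)=0$ while $u>0$ inside $\Omega$, so Hopf's lemma (applied as above) gives $\partial u(z_0)/\partial\nu<0$, again consistent with the claim.

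For the ``in particular'' part, note that $|\nabla u(z_0)|\geq |\partial u(z_0)/\partial\nu|$, which is strictly positive by the preceding argument, so $\nabla u(z_0)\neq 0$ and $z_0$ is not a critical point. I do not expect a serious obstacle here; the only point requiring a bit of care is the applicability of Hopf's lemma, which is justified by the $C^{1,1}$-regularity of $\partial\Omega$ (hence the interior sphere condition) and the fact that $u$ satisfies a uniformly elliptic equation with bounded coefficients and is strictly positive in $\Omega$.
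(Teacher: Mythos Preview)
Your proof is correct and follows essentially the same approach as the paper: both arguments combine the Robin boundary condition with Hopf's lemma applied to the positive first eigenfunction, handling the Dirichlet case directly via Hopf. The only cosmetic difference is the order of contradiction (you assume $u(z_0)=0$ and derive $\partial u/\partial\nu=0$, while the paper assumes $\partial u/\partial\nu=0$ and derives $u(z_0)=0$), and you are slightly more explicit about the interior sphere condition via the $C^{1,1}$ regularity.
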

    \begin{proof}
        In the case of the Dirichlet boundary condition, the result is a direct consequence of Hopf's lemma. Consider the Robin case.  
       If there is a point $z_0 \in \pa\Om$ such that $\partial u(z_0)/\partial \nu = 0$, then $u(z_0)=0$ due of the Robin boundary condition. Since $u$ is positive in $\Omega$, we derive a contradiction to Hopf's lemma, and hence $\partial u/\partial \nu \neq 0$ on $\pa\Omega$. 
       Then, the sign of $\partial u/\partial \nu$ alternates with the sign of $\hi, \ho$ by the Robin boundary condition.
    \end{proof}
    
    \begin{lemma}\label{cor:crit}
        If $\hi,\ho > 0$, then $u$ has a point of global maximum in $\Omega$
        and has no points of local minimum in $\Omega$, while if $\hi,\ho < 0$, then $u$ has a point of global minimum in $\Omega$ and has no points of local maximum in $\Omega$.
    \end{lemma}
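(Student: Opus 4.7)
The plan is to combine, for each sign regime, two ingredients: the boundary derivative information from Lemma~\ref{lem:critonboundary}, and the sign of $\Delta u$ obtained from the eigenvalue equation together with Remark~\ref{rem:sign}, which together unlock an application of the strong maximum principle.

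First I would treat the case $\hi, \ho > 0$. Since $u \in C^1(\overline{\Om})$ and $\overline{\Om}$ is compact, $u$ attains its global maximum on $\overline{\Om}$. By Lemma~\ref{lem:critonboundary}, $\pa u/\pa \nu < 0$ everywhere on $\pa\Om$, so for any boundary point $z_0$ the Taylor expansion $u(z_0 - t\nu(z_0)) = u(z_0) - t\,\pa u(z_0)/\pa \nu + o(t) > u(z_0)$ for small $t > 0$ rules out a boundary maximum, placing the global maximum of $u$ at some point of $\Om$. To rule out interior local minima, I would invoke Remark~\ref{rem:sign} to get $\la_1^{\mathcal{RR}}(\Om) > 0$; since $u > 0$ in $\Om$, the equation gives $-\Delta u = \la_1^{\mathcal{RR}}(\Om)\, u > 0$ in $\Om$, i.e.\ $u$ is strictly superharmonic. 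The strong minimum principle, applied on a small ball around any putative interior local minimum, would force $u$ to be locally constant there, contradicting $-\Delta u > 0$.

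The case $\hi, \ho < 0$ is completely analogous: Lemma~\ref{lem:critonboundary} yields $\pa u/\pa \nu > 0$ on $\pa\Om$, which by the same Taylor comparison pushes the global minimum of $u$ on $\overline{\Om}$ into $\Om$, while Remark~\ref{rem:sign} gives $\la_1^{\mathcal{RR}}(\Om) < 0$ and hence $\Delta u = -\la_1^{\mathcal{RR}}(\Om)\, u > 0$ in $\Om$. Thus $u$ is strictly subharmonic, and the strong maximum principle forbids interior local maxima by exactly the same reasoning.

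I do not anticipate a real obstacle here: the only mild subtlety is to apply the strong min/max principle to \emph{local} (and not only global) extrema, but this is standard, since one simply restricts to a small ball contained in $\Om$ on which the strict differential inequality persists and invokes the principle there.
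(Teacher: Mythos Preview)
Your proof is correct and follows essentially the same approach as the paper: Lemma~\ref{lem:critonboundary} forces the global extremum into the interior, and the sign of $\Delta u$ from the equation together with Remark~\ref{rem:sign} rules out the ``wrong'' interior local extrema. The paper phrases the second step slightly more directly by noting that at an interior local minimum $z_0$ one has $\Delta u(z_0) \geq 0$, which immediately contradicts $-\Delta u(z_0) = \lambda_1^{\mathcal{RR}}(\Omega)\, u(z_0) > 0$, whereas you invoke the strong maximum/minimum principle; the arguments are equivalent.
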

    \begin{proof}
        Let $\hi,\ho > 0$. The existence of a global maximum in $\Omega$ is a consequence of Lemma~\ref{lem:critonboundary}. 
        If we suppose that $u$ has a point of local minimum $z_0 \in \Omega$, then $\Delta u(z_0) \geq 0$, which contradicts \eqref{cutproblem2d} since $u(z_0)>0$ and $\lambda_1^{\mathcal{RR}}(\Omega)>0$ by Remark~\ref{rem:sign}. 
        The case $\hi,\ho < 0$ is proved similarly.         
    \end{proof}

    Let $z_0 = (x_0,y_0) \in \Omega$ be a critical point of $u$. 
    We say that $u$ and a function $v: \mathbb{R}^2 \to \mathbb{R}$ at the points $z_0$ and $(0,0)$, respectively, are \textit{right-equivalent} if there exist neighborhoods $U_1$ of $z_0$ and $U_2$ of $(0,0)$ and a $C^1$-diffeomorphism $\phi: U_1 \to U_2$ such that $\phi(z_0) = (0,0)$ and $v(z) = u(\phi^{-1}(z))$ for any $z \in U_2$; cf. \cite[p.~58]{poston2014catastrophe}.

    Recalling that $u > 0$ in $\Omega$, we get the following classification of critical points of $u$, according to \cite[Section~2.1]{AVM}\footnote{The classification of critical points provided in \cite[Section~2.1]{AVM} is local, and hence, in view of Lemma~\ref{lem:critonboundary}, it is applicable to the first eigenfunction $u$ of \eqref{cutproblem2d}.}:    
\begin{enumerate}[\rm(I)]
\item $z_0$ is \textit{non-degenerate}, i.e., the Hessian determinant of $u$ at $z_0$ is not zero. 
In this case, $u$ is right-equivalent to one of the Morse functions
\begin{equation}\label{eq:nondegen}
	(x,y) \mapsto u(z_0) \pm x^2 \pm y^2.
\end{equation}

\item\label{classif:2} $z_0$ is \textit{semi-degenerate}, i.e., the Hessian matrix of $u$ at $z_0$ has rank $1$. 
In this case, $u$ is right-equivalent to one of the functions
\begin{align}
	\label{eq:semidegen1}
	&(x,y) \mapsto u(z_0) \pm y^2,\\
	\label{eq:semidegen2}
	&(x,y) \mapsto u(z_0) \pm x^k \pm y^2
	\quad \text{for a natural number}~ k \geq 3.	
\end{align}
\end{enumerate}
If we suppose that a critical point $z_0$ of $u$ is fully degenerate, i.e., the Hessian matrix of $u$ at $z_0$ is a zero matrix, then $\Delta u(z_0) = 0$. 
Hence, in view of \eqref{cutproblem2d} and Remark~\ref{rem:sign}, we get $u(z_0) = 0$, which contradicts the positivity of $u$ in $\Omega$. 

Thanks to this classification and in view of Lemma~\ref{cor:crit}, we have the following result on the structure of the set of critical points of $u$, cf.\ \cite[Lemma~2.13]{AVM}, \cite[Theorem~3.1]{arango2010critical}, and \cite{weineffect}.
\begin{lemma}\label{lem:isol} 
The set of critical points of $u$ is nonempty and consists of at most finitely many isolated critical points and at most one analytic curve $\theta$, which is a simple closed curve (i.e., a Jordan curve) and $u$ is constant along $\theta$. 
\end{lemma}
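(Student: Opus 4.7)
The plan is to deduce Lemma~\ref{lem:isol} from the local classification of critical points in Section~\ref{section:critpoints}, combined with the compactness of the critical set $C := \{z \in \Omega : \nabla u(z) = 0\}$, the extremum information of Lemma~\ref{cor:crit}, and the strong maximum principle for the superharmonic function $u$ (indeed $-\Delta u = \lambda_1^{\mathcal{RR}}(\Omega)u$ with the product of factors strictly positive, by Remark~\ref{rem:sign} and the positivity of $u$).

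First I would verify nonemptiness and compactness of $C$. Nonemptiness is immediate from Lemma~\ref{cor:crit}, which guarantees an interior global extremum. For compactness, Lemma~\ref{lem:critonboundary} ensures $|\nabla u|>0$ on $\partial\Omega$; the $C^1$-regularity of $u$ on $\overline{\Omega}$ together with compactness of $\partial\Omega$ yields a positive lower bound for $|\nabla u|$ in some neighborhood of $\partial\Omega$, so $C$ is a compact subset of $\Omega$.

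Next I would exploit the classification (I)--(II) pointwise. In cases~\eqref{eq:nondegen} and~\eqref{eq:semidegen2} the critical point is isolated, while in case~\eqref{eq:semidegen1} one has $u(z) = u(z_0) \pm y^2$ in local coordinates, so $C$ coincides near $z_0$ with the analytic arc $\{y=0\}$ and $u$ is constant along it; Lemma~\ref{cor:crit} forces the sign to be $-y^2$ if $\hi,\ho>0$ and $+y^2$ if $\hi,\ho<0$, so the semi-degenerate curve is a weak local extremum curve. Finiteness of the isolated critical points then follows by a standard accumulation argument: an infinite family of such points would accumulate in $C$ by compactness, the accumulation point would necessarily be non-isolated and therefore of type~\eqref{eq:semidegen1}, and its local arc structure would force nearby critical points to lie on the arc, contradicting their isolation. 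The remaining, non-isolated part of $C$ is thus a compact one-dimensional analytic submanifold of $\Omega$ without boundary; the classification of compact connected 1-manifolds identifies its finitely many connected components as simple closed analytic curves (Jordan curves), along each of which $u$ is constant by analytic continuation from the local form.

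The main obstacle is the uniqueness assertion, namely that at most one such curve can occur. I would argue by contradiction using the maximum principle for the superharmonic $u$ together with the Jordan curve theorem; take $\hi,\ho>0$, the opposite sign being symmetric. Any critical curve $\theta$ is then a weak local maximum curve with $u \leq c := u|_\theta$ in a tubular neighborhood of $\theta$; the Jordan curve theorem gives a bounded plane region $D$ enclosed by $\theta$, and since $\theta\subset\Omega$ and $\Omega\cap\partial\Omi=\emptyset$, either $\Omi\subset D$ or $D\subset\Omega$. In the contractible subcase $D\subset\Omega$, the minimum principle applied to the superharmonic $u$ on $\overline{D}$ gives $u\geq c$ in $D$, while the local form gives $u\leq c$ on the $D$-side of $\theta$; analyticity of $u$ then forces $u\equiv c$ in $D$, contradicting $-\Delta u = \lambda_1^{\mathcal{RR}}(\Omega)u\not\equiv 0$. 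Hence every critical curve must enclose $\Omi$, and if two such curves $\theta_1,\theta_2$ coexisted they would bound an annular region $A\subset\Omega$ with $u=c_i$ on $\theta_i$; applying the same combination of minimum principle, local extremum inequality near the boundary component with value $\min(c_1,c_2)$, and unique continuation for real-analytic functions, one again forces $u$ to be constant on $A$ and derives a contradiction. I expect this last step to require the most care, as it couples the topology of Jordan curves in the doubly connected domain $\Omega$ with the global analytic rigidity of $u$, along the lines of \cite[Lemma~2.13]{AVM}.
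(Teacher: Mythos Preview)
Your proposal is correct and follows essentially the same route as the paper. Nonemptiness, compactness of the critical set, the use of the local classification (I)--(II), the accumulation argument for finiteness of isolated critical points, and the identification of the non-isolated part as a simple closed analytic curve along which $u$ is constant all match the paper's reasoning (the paper is terser, invoking Morse--Sard for the constancy, but the content is the same).

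The only noteworthy difference is in the uniqueness step. The paper dispenses with the minimum principle and analytic continuation entirely: it appeals directly to Lemma~\ref{cor:crit} (absence of interior local minima when $\hi,\ho>0$) to argue that a critical curve $\theta$ not surrounding $\Omi$ would bound a disk in $\Omega$ containing an interior local minimum of $u$, and that two nested critical curves would bound an annular region in $\Omega$ with the same defect. Your argument via the minimum principle for the superharmonic $u$, the local inequality $u\leq c$ near $\theta$, and unique continuation is equally valid and ultimately rests on the same sign information $-\Delta u>0$; it just packages that information differently. The paper's version is slightly lighter (it avoids the analytic-continuation step), while yours makes the underlying PDE mechanism more explicit.
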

\begin{proof}
    Due to Lemma~\ref{cor:crit}, the set of critical points is nonempty. 
    The classification above implies that the number of isolated critical points is finite. Indeed, if there is a nonisolated critical point, then it necessarily belongs to a curve of critical points which consists of either local minimum critical points or local maximum critical points. 
    Since $u$ has no critical points on $\pa\Om$ (see Lemma~\ref{lem:critonboundary}), any curve of critical points lies in $\Omega$, and the analyticity of $u$ implies that this curve is simple and closed. 
    Assume that $\hi,\hi>0$. 
    According to Lemma~\ref{cor:crit}, any curve $\theta$ of critical points consists of local maximum critical points, and, at the same time, $u$ has no local minimum critical points. Hence, we deduce that $\theta$ must surround $\Omi$. 
    Since $\Omi$ is connected, we derive that there is at most one such $\theta$. 
    In view of the regularity of $u$, the Morse-Sard theorem implies that $u$ is constant along $\theta$. 
    The case $\hi,\hi<0$ is covered analogously. 
\end{proof}

\subsection{Gradient flow}
Since $u \in C^{1}(\overline{\Omega})$, it can be extended to a neighborhood of $\pa\Om$ in a $C^{1}$-way, see \cite[Lemma~6.37]{gilbargtrudinger}. 
The gradient \textit{flow lines} of $u$ are defined as solutions of the Cauchy problem
\begin{equation}\label{eq:gradflow2}
	\dot{z}(t) = -\nabla u(z(t)), \quad t \in I,
	\qquad z(0)=z_0 \in \overline{\Omega},
\end{equation}
where $z(t)=(x(t),y(t))$ and $I = I(z_0) \subset \mathbb{R}$ is the maximal interval such that the solution $\gamma(\cdot,z_0): I \to \mathbb{R}^2$ of \eqref{eq:gradflow2} belongs to $\overline{\Omega}$. 
Due to the regularity of $u$, $\gamma(\cdot,z_0)$ exists; moreover, $\gamma(\cdot,z_0)$ is unique and analytic in $\Omega$ whenever $z_0 \in \Omega$, see, e.g., \cite[Section~2.3, Remark~1]{perko2013differential}.
If $|\nabla u(z_0)| > 0$, then $u$ is strictly decreasing along $\gamma(\cdot,z_0)$, i.e., $u(\gamma(t_1,z_0)) > u(\gamma(t_2,z_0))$ for any admissible $t_1<t_2$, and $\partial u/\partial \nu = 0$ along $\gamma(\cdot,z_0)$. 
Let us explicitly note that we consider $\gamma(\cdot,z_0)$ for both positive and negative ``times'', with $\gamma(0,z_0) = z_0$. 
Trivially, if $z_0$ is a critical point of $u$, then $\gamma(t,z_0)=z_0$ for any $t \in \mathbb{R}$. 

The following simple lemma states that flow lines starting on the boundary $\partial\Omega$ remain in $\Omega$ for all positive or negative times, depending on the sign of the Robin parameters. 
\begin{lemma}\label{lem:traj:boundry}
    Let $z_0 \in \partial\Om$. 
    If $\hi,\ho > 0$, then $\gamma(t,z_0) \in \Omega$ for any $t \in [-\infty,0)$, while if $\hi,\ho<0$, then $\gamma(t,z_0) \in \Omega$ for any $t \in (0,+\infty]$. 
\end{lemma}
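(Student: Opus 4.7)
My plan is to reduce the statement to a one-sided first-order exit argument at each visited boundary point, powered by Lemma~\ref{lem:critonboundary}. I will treat the case $\hi,\ho>0$ in detail; the other case follows by time reversal, and for brevity I will describe it only at the end.

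At the initial instant $t=0$, I compute
$$
\dot\gamma(0,z_0)\cdot\nu(z_0) \;=\; -\nabla u(z_0)\cdot \nu(z_0) \;=\; -\frac{\partial u(z_0)}{\partial \nu}.
$$
By Lemma~\ref{lem:critonboundary}, the right-hand side is strictly positive in the case $\hi,\ho>0$, so the velocity of the flow at $z_0$ has a strict outward normal component. Combining a first-order Taylor expansion of $\gamma(\cdot,z_0)$ at $t=0$ with a local $C^{1,1}$-graph representation of $\partial\Om$ near $z_0$, I conclude that there exists $\epsilon>0$ such that $\gamma(t,z_0)\in\Om$ for $t\in(-\epsilon,0)$ and $\gamma(t,z_0)\notin\overline\Om$ for $t\in(0,\epsilon)$. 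In particular, $I(z_0)\cap[0,+\infty)=\{0\}$.

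To globalize the interior claim to all negative times, I argue by contradiction. Let $T\coloneqq\inf I(z_0)$. If $T>-\infty$, the compactness of $\overline\Om$ together with the continuity of $-\nabla u$ on $\overline\Om$ allow me to extend $\gamma(\cdot,z_0)$ continuously at $T$, and the maximality of $I(z_0)$ forces $\gamma(T,z_0)\in\partial\Om$. Repeating the first-order analysis at this new boundary point gives $\dot\gamma(T,z_0)\cdot\nu(\gamma(T,z_0))>0$, so $\gamma(t,z_0)\notin\overline\Om$ for $t$ slightly above $T$, contradicting the fact that $(T,0]\subset I(z_0)$. Hence $T=-\infty$. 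The same first-order step also rules out any interior-to-boundary return: if $\gamma(t_1,z_0)\in\partial\Om$ were to occur for some $t_1\in(-\infty,0)$, then the flow would immediately exit $\overline\Om$ for $t$ slightly above $t_1$, contradicting $(t_1,0]\subset I(z_0)$. This yields $\gamma(t,z_0)\in\Om$ for every $t\in(-\infty,0)$, as required.

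The case $\hi,\ho<0$ is handled by the same scheme with time reversed: Lemma~\ref{lem:critonboundary} now gives $\partial u(z_0)/\partial\nu>0$, so $\dot\gamma(0,z_0)\cdot\nu(z_0)<0$, and the flow is strictly inward-pointing at $t=0$; the analogous contradiction argument applied forward in time yields $\gamma(t,z_0)\in\Om$ for all $t\in(0,+\infty)$. The main obstacle I anticipate is precisely the rigorous justification of the first-order exit step: given a strictly outward velocity at a boundary point, one must rule out tangential grazing of $\partial\Om$. This is where the $C^{1,1}$ regularity of $\partial\Om$ becomes essential, since a local-graph parametrization of $\partial\Om$ combined with the Taylor expansion of $\gamma$ then forces the trajectory to leave (respectively, enter) $\overline\Om$ at genuinely positive normal distance rather than returning tangentially. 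Every other step amounts to a standard application of ODE theory on the compact set $\overline\Om$.
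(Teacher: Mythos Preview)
Your proof is correct and follows essentially the same approach as the paper: both use Lemma~\ref{lem:critonboundary} to show that the velocity $-\nabla u$ is strictly outward at every boundary point, then combine this local first-order exit argument with a contradiction to rule out returns to $\partial\Omega$ for negative times. The paper compresses the non-return step into a single sentence invoking the strict monotonicity of $u$ along flow lines, whereas you spell out the contradiction via the infimum $T$ of $I(z_0)$ and a supposed interior-to-boundary hit; these are the same idea in different wording.
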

\begin{proof}
    Let $\hi,\ho > 0$. 
    By Lemma~\ref{lem:critonboundary}, for any $z_0 \in \partial\Om$ and any $\gamma(\cdot,z_0)$ we have 
    $$
    \dot \gamma(0,z_0) \cdot \nu
    =
    -\nabla u(\gamma(0,z_0)) \cdot \nu
    =
    -\partial u(z_0)/\partial \nu > 0.
    $$
    That is, $-\nabla u(z_0)$ makes an angle less than $\pi/2$ with $\nu$. 
    Since $\pa\Om$ is regular, we see that $\gamma(t,z_0) \in \Omega$ for any $t<0$ sufficiently close to $0$. 
    Recalling that $t \mapsto u(\gamma(t,z_0))$ is strictly monotone, it is not hard to conclude that $\gamma(t,z_0) \not\in \pa\Om$ for any $t < 0$. 
    Since $u$ has no critical points on $\pa\Om$ (see Lemma~\ref{lem:critonboundary}), we get $\gamma(t,z_0) \in \Omega$ for any $t \in [-\infty,0)$. 
    The case $\hi,\ho<0$ can be proved in the same way.  
\end{proof}   

\begin{remark}[Arnold-Thom conjecture]\label{rem:thom-conjecture}
In the case when $\gamma(\cdot,z_0)$ converges to a critical point of $u$ (either as $t \to +\infty$ or $t \to -\infty$), it is known from \cite[p.~768]{kurdyka2000proof} that the following limit exists:
$$
\lim_{t \to \infty} \frac{\dot{\gamma}(t,z_0)}{|\dot{\gamma}(t,z_0)|},
$$
that is, the normalized tangents to $\gamma(\cdot,z_0)$ converge. 
We note that the same result remains unknown in higher dimensions.
\end{remark}

The following technical fact, which states that secants of a flow line tend to be collinear to its normalized tangents, is geometrically evident from Remark~\ref{rem:thom-conjecture}, and we provide details for the sake of clarity. 
\begin{lemma}\label{lem:tangents}
	Let $z_0 \in \Omega$ be a regular point of $u$. 
	If ${\gamma}(t,z_0) = (x(t),y(t))$ converges to a critical point $z_1 = (x_1,y_1)\in \Omega$ as $t \to +\infty$ (resp.,~$t \to -\infty$), then the scalar product
	\begin{equation}\label{lem:tangents:claim}
		\left< \frac{(\dot x(t),\dot y(t))}{|\dot{\gamma}(t,z_0)|}, 
		\frac{(-(y(t)-y_1),x(t)-x_1)}{|{\gamma}(t,z_0)-z_1|}
		\right>
		\to 0 
	\end{equation}
	as $t \to +\infty$ (resp.,~$t \to -\infty$).
\end{lemma}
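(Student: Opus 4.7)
\emph{Proof proposal.} The plan is to show that the normalized secant $e(t) := (\gamma(t, z_0) - z_1)/|\gamma(t, z_0) - z_1|$ converges (up to sign) to the Arnold--Thom limit of the normalized tangent. Once this is established, the second vector in \eqref{lem:tangents:claim}, being the counterclockwise $\pi/2$-rotation of $e(t)$, converges to a vector orthogonal to that same limit, and the scalar product in \eqref{lem:tangents:claim} tends to zero.

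Write $v(t) := \dot{\gamma}(t, z_0)/|\dot{\gamma}(t, z_0)|$ and, by Remark~\ref{rem:thom-conjecture}, let $v_\infty$ denote the unit vector with $v(t) \to v_\infty$ as $t \to +\infty$ (the case $t \to -\infty$ is symmetric, with the integration interval $(-\infty, t)$ replacing $(t, +\infty)$). The crucial intermediate step is to show that the forward arc length $L(t) := \int_t^{+\infty} |\dot\gamma(s, z_0)|\,ds$ is finite. I would argue by contradiction: choose $T$ large enough so that $\langle v(u), v_\infty\rangle > 1/2$ for all $u \geq T$, which is possible since $v(u) \to v_\infty$; then for any $s > t \geq T$,
\begin{equation*}
    \bigl\langle \gamma(s, z_0) - \gamma(t, z_0),\, v_\infty \bigr\rangle
    = \int_t^s |\dot\gamma(u, z_0)|\,\langle v(u), v_\infty\rangle\,du
    \geq \tfrac{1}{2}\int_t^s |\dot\gamma(u, z_0)|\,du.
\end{equation*}
If $L(t) = \infty$, the right-hand side diverges as $s \to \infty$, contradicting the boundedness of $\gamma(s, z_0) - \gamma(t, z_0)$ that follows from $\gamma(s, z_0) \to z_1$.

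With $L(t) < \infty$ in hand, $\dot\gamma(\cdot, z_0)$ is absolutely integrable on $(t, +\infty)$, so
\begin{equation*}
    z_1 - \gamma(t, z_0) = \int_t^{+\infty} \dot\gamma(u, z_0)\,du = v_\infty\, L(t) + \int_t^{+\infty} |\dot\gamma(u, z_0)|\bigl(v(u) - v_\infty\bigr)\,du,
\end{equation*}
where the remainder is bounded in norm by $L(t)\sup_{u \geq t}|v(u) - v_\infty| = o(L(t))$. Consequently $|z_1 - \gamma(t, z_0)| = L(t)\bigl(1 + o(1)\bigr)$ and $e(t) \to -v_\infty$, which concludes the argument as outlined above. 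The main obstacle is the finite-length step: it is the only place where the Arnold--Thom convergence of the normalized tangent genuinely has to be combined with the convergence $\gamma(t, z_0) \to z_1$. The rest of the proof is a routine integration estimate.
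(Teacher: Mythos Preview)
Your proof is correct and takes a genuinely different route from the paper's. The paper argues directly with the coordinates: after rotating so that the tangent limit $(a,b)$ has $b\neq 0$, it applies the extended (Cauchy) mean value theorem to obtain, for each $t$, some $t_1>t$ with $(x(t)-x_1)\dot y(t_1)=(y(t)-y_1)\dot x(t_1)$, and then substitutes into the scalar product to see it tends to zero. Your approach instead proves the stronger geometric fact that the normalized secant $e(t)$ itself converges to $-v_\infty$, from which the claim follows immediately. The cost is the extra finite--arc--length step, but this is short and your contradiction argument via $\langle v(u),v_\infty\rangle>1/2$ is clean. In return you obtain a statement that is both more transparent and more informative than the lemma as written: you show that the secant and the tangent are asymptotically collinear, not merely that their angle does not stay bounded away from zero along some subsequence. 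The paper's mean--value argument avoids any integrability considerations but yields only the bare conclusion.
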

\begin{proof}
	Assume, without loss of generality, that ${\gamma}(t,z_0) \to z_1$ as $t \to +\infty$, where $z_1 = (x_1,y_1)\in \Omega$ is a critical point of $u$. 
	In view of Remark~\ref{rem:thom-conjecture}, there exists a vector $(a,b)$ with $|(a,b)|=1$ such that
	\begin{equation}\label{lem:tangents:1}
		\frac{(\dot x(t),\dot y(t))}{|\dot{\gamma}(t,z_0)|}  \to (a,b) 
		\quad \text{as}~ t \to +\infty.
	\end{equation}
	Rotating the coordinate system if necessary, we can assume that $b \neq 0$. 
	In particular, 
	\begin{equation}\label{lem:tangents:2}
		\frac{\dot x(t)}{\dot y(t)} \to \frac{a}{b} 
		\quad \text{as}~ t \to +\infty.
	\end{equation}
	On the other hand, by the extended mean value theorem, for any $t>0$ there exists $t_1 \in (t,+\infty)$ such that
	\begin{equation}\label{eq:lem46:1}
		(x(t) - x_1) \dot y(t_1)
		=
		(y(t) - y_1) \dot x(t_1).
	\end{equation}
	Hence, taking any sufficiently large $t>0$ and dividing both sides of \eqref{eq:lem46:1} by $|{\gamma}(t,z_0)-z_1| \, \dot y(t_1)$, we get
	$$
	\frac{x(t) - x_1}{|{\gamma}(t,z_0)-z_1|} 
	=
	\frac{y(t) - y_1}{|{\gamma}(t,z_0)-z_1|}  
	\cdot
	\frac{\dot x(t_1)}{\dot y(t_1)}.
	$$
	This implies that
	\begin{multline}
		\left<\frac{(\dot x(t),\dot y(t))}{|\dot{\gamma}(t,z_0)|},
		\frac{(-(y(t)-y_1),x(t)-x_1)}{|{\gamma}(t,z_0)-z_1|}
		\right>
		\\
		=
		\frac{y(t) - y_1}{|{\gamma}(t,z_0)-z_1|} 
		\left(
		-\frac{\dot x(t)}{|\dot{\gamma}(t,z_0)|}
		+ 
		\frac{\dot x(t_1)}{\dot y(t_1)} \cdot \frac{\dot y(t)}{|\dot{\gamma}(t,z_0)|} 
		\right).
		\label{eq:lem46:2}
	\end{multline}
	Taking into account \eqref{lem:tangents:1}, \eqref{lem:tangents:2}, and recalling that $t_1 \to +\infty$ as $t \to +\infty$, we deduce that the sum in the brackets on the right-hand side of \eqref{eq:lem46:2} tends to zero as $t \to +\infty$. 
	Thus, we obtain the convergence \eqref{lem:tangents:claim}.
\end{proof}

\subsection{Effectless cut}\label{sec:effectlesscup}
Let us consider the set of points in $\Omega$ whose flow lines reach $\pa \Omi$:
    \begin{align}\label{def:G}
        \Gi &=\{z\in \Om:~ \gamma(t_z, z) \in \pa\Omi~ \text{ for some}~
t_z \in \mathbb{R}\}.
    \end{align}
The part of the boundary $\pa \Gi \cap \Omega$ was studied by \textsc{Weinberger} \cite{weineffect} and it was called \textit{effectless cut} therein. The properties of the effectless cut are vital for the arguments of \textsc{Hersch} \cite[Section~3]{hers} and also for the proof of Theorem~\ref{thm:RR}. 
However, the analysis performed in \cite{weineffect} is very compressed and lacks various details. 
We revise the arguments from \cite{weineffect} and extend them to the present settings.

First, we show that $\Gi$ is open. 
We refer to \cite{weineffect} and \cite[Lemma~4.1]{AVM} for related statements. 

\begin{lemma}\label{lem:G:open}
	The set $\Gi$ is open, connected, and $\overline{\Gi} \cap \pa\Omo = \emptyset$.  
\end{lemma}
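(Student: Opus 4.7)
The plan is to treat the case $\hi,\ho>0$ (the opposite case is handled by reversing the direction of the flow) and to exploit the transversality from Lemma~\ref{lem:critonboundary} together with the global non-escape result of Lemma~\ref{lem:traj:boundry}: $-\nabla u$ is outward and transverse to $\pa\Omega$, forward flow lines from $\pa\Omi$ exit into $\Omi$, and backward flow lines from $\pa\Omi$ remain in $\Omega$ for all negative times. Consequently, $z\in\Gi$ if and only if there is a unique $t_z>0$ with $\gamma([0,t_z],z)\subset\overline{\Omega}$ and $\gamma(t_z,z)\in\pa\Omi$.

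For openness, fix $z_0\in\Gi$ and set $w_0:=\gamma(t_0,z_0)\in\pa\Omi$. The compact arc $\gamma([0,t_0],z_0)$ carries no critical points of $u$, since $u$ strictly decreases along it, and it is bounded away from $\pa\Omo$. Choosing a $C^{1,1}$ local defining function $F$ of $\pa\Omi$ near $w_0$, we have $\nabla F(w_0)\cdot(-\nabla u(w_0))\neq 0$ by Lemma~\ref{lem:critonboundary}, so the implicit function theorem applied to $(z,t)\mapsto F(\gamma(t,z))$ produces a continuous hitting time $\tilde t(z)$ near $t_0$ defined on a neighborhood of $z_0$. Continuous dependence of the ODE on initial data, combined with the positive distance from $\gamma([0,t_0),z_0)$ to $\pa\Omo$, ensures $\gamma([0,\tilde t(z)],z)\subset\overline{\Omega}$, so $z\in\Gi$.

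For $\overline{\Gi}\cap\pa\Omo=\emptyset$, fix $z^*\in\pa\Omo$ and let $f$ denote the signed distance to $\pa\Omo$ (negative inside $\Omega$), which is $C^{1,1}$ on a tubular neighborhood by the $C^{1,1}$ regularity of $\pa\Omega$ and satisfies $\nabla f=\nu$ on $\pa\Omo$. Since $-\nabla u(z^*)\cdot\nu(z^*)>0$ by Lemma~\ref{lem:critonboundary}, continuity yields a ball $V=B_r(z^*)$ and constants $c,M>0$ with $-\nabla u\cdot\nabla f\geq c$ and $|\nabla u|\leq M$ throughout $V$. For $z\in V\cap\Omega$ with $|f(z)|<cr/M$, the bound $\tfrac{d}{dt}f(\gamma(t,z))\geq c$ forces the forward trajectory to reach $\{f=0\}=\pa\Omo$ within time $|f(z)|/c$, while $|\dot\gamma|\leq M$ keeps it inside $V$, so the exit point lies on $\pa\Omo\cap V$ rather than $\pa\Omi$. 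Hence such $z$ cannot belong to $\Gi$, giving $z^*\notin\overline{\Gi}$.

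For connectedness, Lemma~\ref{lem:traj:boundry} directly asserts $\gamma(-\sigma,w)\in\Omega$ for every $w\in\pa\Omi$ and every $\sigma>0$, so the backward-flow map $\Psi\colon\pa\Omi\times(0,+\infty)\to\Omega$, $\Psi(w,\sigma):=\gamma(-\sigma,w)$, is well-defined and continuous by continuous dependence of the ODE. Its image equals $\Gi$: each $\Psi(w,\sigma)$ has forward flow reaching $w\in\pa\Omi$ at time $\sigma$, and conversely every $z\in\Gi$ equals $\Psi(\gamma(t_z,z),t_z)$. Since $\pa\Omi$ is a Jordan curve, $\pa\Omi\times(0,+\infty)$ is connected, and so is its continuous image $\Gi$. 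The main technical hurdle lies in the openness step, where the local transversality from Lemma~\ref{lem:critonboundary} must be coupled with continuous dependence to simultaneously control both the hitting time and the trajectory's avoidance of $\pa\Omo$.
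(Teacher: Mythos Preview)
Your overall strategy is sound and captures the right geometric picture. There is, however, a regularity issue you do not address: the paper only has $u\in C^{1}(\overline{\Omega})$, so $\nabla u$ is merely continuous near $\partial\Omega$, and uniqueness of the gradient flow for initial data on $\partial\Omega$ is \emph{not} available. The paper explicitly flags this in its own proof of the lemma.

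This affects two of your three steps. In the openness argument, the implicit function theorem applied to $(z,t)\mapsto F(\gamma(t,z))$ presupposes that $\gamma(t,z)$ is single-valued and continuous in a full neighborhood of $(z_0,t_0)$; but once a trajectory crosses $\partial\Omi$ it lies in the $C^1$-extension region where solutions need not be unique, so the map may be multi-valued there. An easy fix is to reuse your signed-distance argument from the $\partial\Omo$ step: by continuous dependence in the analytic interior, $\gamma(t_0-\varepsilon,z)$ is close to $w_0$ for $z$ near $z_0$, and then the signed distance to $\partial\Omi$ increases at rate $\ge c>0$ along \emph{any} solution, forcing a hit on $\partial\Omi$ regardless of uniqueness. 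The paper takes a different route: it constructs a trapping region bounded by two backward flow lines from $\partial\Omi$ and a level curve of $u$, chosen precisely so that the argument is robust to non-uniqueness at the boundary. In the connectedness argument, your map $\Psi(w,\sigma)=\gamma(-\sigma,w)$ may fail to be single-valued for the same reason. The paper instead observes that $\Gi$ contains a connected neighborhood of $\partial\Omi$ and that each $z\in\Gi$ is joined to this neighborhood by its \emph{forward} flow line $\gamma([0,t_z),z)$, which lies entirely in $\Omega$ and is therefore unique.

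Your proof of $\overline{\Gi}\cap\partial\Omo=\emptyset$ via the signed distance is correct and in fact more explicit than the paper's one-line appeal to Lemma~\ref{lem:traj:boundry}.
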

\begin{proof} 
	Let $\hi,\ho>0$. 
	We start with the openness of $\Gi$. 
	Take any $\xi \in \Gi$ and denote $z_0 := \gamma(t_\xi,\xi) \in \pa\Omi$, where $t_\xi > 0$ exists by the definition of $\Gi$. 
	By Lemma \ref{lem:critonboundary}, $|\nabla u(z_0)|>0$.
	
	We claim that for any $\varepsilon>0$ there exists $\delta>0$ such that if $z\in B_{\delta}(z_0)\cap \partial \Omega$, then any flow line $\gamma(\cdot,z)$ crosses the level set $\{u(z_0)+\varepsilon\}$ as $t$ decreases from $0$.
	(Here we note that the uniqueness of $\gamma(\cdot,z)$ cannot be guaranteed because of the lack of the boundary regularity of $u$.)
	
	Suppose for a moment that the claim is true. 
	Then we can find $z_1, z_2 \in \pa\Omi$ from either sides of $z_0$ such that  
	any two fixed flow lines $\gamma_i:=\gamma(\cdot,z_i)$, $i=1,2$, cross the level set $\{u(z_0)+\varepsilon\}$ as $t$ decreases from $0$. 
	Let now $U$ be a subdomain of $\Omega$ enclosed by $\gamma_1$, $\gamma_2$, $\partial \Omi$, and the level set $\{u(z_0)+\varepsilon_1\}$.
	Since $u$ is strictly monotone along the gradient flow, any flow line that reaches $U$ from $\Omega \setminus \overline{U}$ becomes trapped in $U$ and hence terminates on $\partial \Omi \cap \overline{U}$ in finite time.
	By the continuous dependence of the Cauchy problem \eqref{eq:gradflow2} on the initial data, there exist $\tilde\delta, \tilde t>0$ such that $\gamma(\tilde t,z)\in U$ for all $z\in B_{\tilde \delta}(\xi)$. 
	Consequently, $B_{\tilde \delta}(\xi) \subset \Gi$ and hence $\Gi$ is open.  
	Analogous arguments work in the case $\hi,\ho<0$. 
	
	Let us now verify the technical claim stated above. 
	First we show that for any  $\varepsilon>0$ there exist $\delta_0, t_0>0$ such that if $z\in B_{\delta_0}(z_0)\cap \partial\Omega$ and $t \in [-t_0,0]$, then for any flow line $\gamma(\cdot,z)$ we have 
	\begin{equation}\label{eq:flowincr:0}
		|\nabla u(\gamma(t,z))|^2 
		> 
		|\nabla u(z_0)|^2 - \varepsilon.
	\end{equation}
	Suppose, by contradiction, that there exists $\varepsilon >0$, a sequence $\{z_n\} \subset \partial \Omega$ converging to $z_0$, a sequence  of flow lines $\{\gamma_n(\cdot,z_n)\}$, and a sequence  $\{t_n\} \subset (-\infty,0)$ converging to $0$ such that 
	\begin{equation}\label{eq:flowincr1}
		|\nabla u(\gamma_n(t_n,z_n))|^2 
		\leq
		|\nabla u(z_0)|^2 - \varepsilon.
	\end{equation} 
    Notice that for any $z \in \overline{\Omega}$, $\gamma(\cdot,z)$, and $t<0$ we have the uniform estimate 
	$$
	|\gamma(t,z)-\gamma(0,z)| = \left|\int_t^0 \dot{\gamma}(s,z)\ds \right|=  \left|\int_t^0 -\nabla u(\gamma(s,z))\ds \right|\leq |t| \sup_{\eta\in \overline{\Omega}}\|\nabla u(\eta)\|.
	$$
	Therefore, the sequence $\{\gamma_n(t_n,z_n)\}$ converges to $z_0$. 
	The fact that $u \in C^1(\overline{\Omega})$ together with \eqref{eq:flowincr1} then yields a contradiction:
	$$
	|\nabla u(z_0)|^2 
	\leq
	|\nabla u(z_0)|^2 - \varepsilon.
	$$
	That is, \eqref{eq:flowincr:0} is satisfied. 
	Further, by the continuity of $u$ in $\overline{\Omega}$, 
	for any $\varepsilon_1>0$ we can find  $\delta_1>0$ such 
	that $\delta_1<\delta_0$ and 
	\begin{equation}\label{eq:flowincr:-1}
		u(z_0) + \varepsilon_1 > u(z) > u(z_0) - \varepsilon_1
		\quad \text{for any}~ z\in B_{\delta_1}(z_0)\cap \overline{\Omega}.
	\end{equation}
	At the same time, for any $t < 0$ we get 
	\begin{equation}\label{eq:flowincr:1}
		u(\gamma(t,z))
		=
		u(z) -\int_t^0 \frac{du}{ds}(\gamma(s,z)) \ds
		=
		u(z) + \int_t^0 |\nabla u(\gamma(s,z))|^2 \ds.
	\end{equation}
	Therefore, taking any $z\in B_{\delta_1}(z_0)\cap \partial \Omega$, $t \in [-t_0,0]$ and using \eqref{eq:flowincr:-1}, \eqref{eq:flowincr:0}, we deduce the estimate
	\begin{equation}\label{eq:flowincr:2}
		u(\gamma(t,z))
		>
		u(z_0) - \varepsilon_1
		-t(|\nabla u(z_0)|^2 - \varepsilon).
	\end{equation}
    Assuming that $\varepsilon>0$ is sufficiently small and choosing $\varepsilon_1 = t_0(|\nabla u(z_0)|^2 - \varepsilon) - \varepsilon > 0$, 
	for any $z\in B_{\delta_1}(z_0)\cap \partial \Omega$ we obtain 
	\begin{equation}\label{eq:flowincr:3}
		u(\gamma(-t_0,z))> u(z_0)+ \varepsilon,
	\end{equation}
	which gives the desired claim.

Let us prove the remaining assertions. 
It is not hard to deduce from Lemma~\ref{lem:traj:boundry} that $\Gi$ contains a neighborhood of $\partial \Omi$. 
Therefore, since $\pa\Omi$ is connected, so is $\Gi$. 
The claim $\overline{\Gi} \cap \pa\Omo = \emptyset$ also follows from Lemma~\ref{lem:traj:boundry} and the strict monotonicity of $u$ along the gradient flow. 
\end{proof}

Alongside $\Gi$, we consider the set of points whose flow lines reach $\pa \Omo$:
    \begin{align}\label{def:Go}
        \Go &=\{z\in \Om:~ \gamma(t_z, z) \in \pa\Omo~ \text{ for some}~
t_z \in \mathbb{R}\}.
    \end{align}
Arguing in much the same way as in Lemma~\ref{lem:G:open}, we have the following facts.
\begin{lemma}\label{lem:Gout:open}
    The set $\Go$ is open, connected, and $\overline{\Go} \cap \pa\Omi = \emptyset$.  
\end{lemma}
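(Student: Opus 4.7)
The plan is to mirror the proof of Lemma~\ref{lem:G:open} verbatim with the roles of $\pa\Omi$ and $\pa\Omo$ interchanged. I would spell out only the case $\hi,\ho > 0$; the case $\hi,\ho < 0$ is fully analogous upon reversing the direction of time.

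For openness, I would fix $\xi \in \Go$ and set $z_0 := \gamma(t_\xi,\xi) \in \pa\Omo$ with $t_\xi > 0$, so that $|\nabla u(z_0)| > 0$ by Lemma~\ref{lem:critonboundary}. The technical claim established inside the proof of Lemma~\ref{lem:G:open} --- that for every $\varepsilon > 0$ there exists $\delta > 0$ such that every backward flow line from a point $z \in B_\delta(z_0) \cap \pa\Om$ crosses the level set $\{u = u(z_0) + \varepsilon\}$ --- relied only on $u \in C^1(\overline{\Omega})$ and $|\nabla u(z_0)| > 0$, and therefore applies verbatim to $z_0 \in \pa\Omo$. Picking $z_1, z_2 \in \pa\Omo$ on either side of $z_0$ along the boundary, the backward flow lines $\gamma_i := \gamma(\cdot,z_i)$ together with an arc of $\pa\Omo$ and a segment of the level set enclose a subdomain $U$. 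Strict monotonicity of $u$ along the flow forbids forward trajectories from exiting $U$ through the level set, and tangency forbids exit through $\gamma_1,\gamma_2$, so any forward trajectory entering $U$ must exit through $\pa\Omo \cap \pa U$ in finite time. Continuous dependence of \eqref{eq:gradflow2} on initial data then produces $\tilde\delta,\tilde t > 0$ with $\gamma(\tilde t,z) \in U$ for every $z \in B_{\tilde\delta}(\xi)$, giving $B_{\tilde\delta}(\xi) \subset \Go$.

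For connectedness, I would exploit the transversality of $-\nabla u$ to $\pa\Omo$ (Lemma~\ref{lem:critonboundary}) to see that $(w,s) \mapsto \gamma(-s,w)$ is a $C^1$-diffeomorphism of $\pa\Omo \times (0,s_0)$ onto a one-sided tubular neighborhood $\mathcal{T} \subset \Go$ for some $s_0 > 0$; since $\pa\Omo$ is connected, so is $\mathcal{T}$. For any $\xi \in \Go$, the orbit $t \mapsto \gamma(t,\xi)$ on $[0,t_\xi)$ is a continuous path lying in $\Go$ (each of its points satisfies the definition of $\Go$) and, using the group law $\gamma(t,\xi) = \gamma(-(t_\xi-t),z_0)$, it lies in $\mathcal{T}$ for all $t$ sufficiently close to $t_\xi$. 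Hence every $\xi \in \Go$ is path-connected within $\Go$ to the connected set $\mathcal{T}$, and $\Go$ is connected.

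Finally, for the disjointness $\overline{\Go} \cap \pa\Omi = \emptyset$, the analogous tubular neighborhood $\mathcal{T}'$ of $\pa\Omi$ sits entirely inside $\Gi$, as already exploited in the proof of Lemma~\ref{lem:G:open}. Because forward flow lines in $\Omega$ are unique, $\Gi \cap \Go = \emptyset$, which forces $\Go \cap \mathcal{T}' = \emptyset$ and places $\Go$ at a positive distance from $\pa\Omi$, hence $\overline{\Go} \cap \pa\Omi = \emptyset$. I do not anticipate any substantial obstacle; the only mild care required is verifying that the enclosing level-set segment in the construction of $U$ lies strictly inside $\Omega$, which is automatic once $\varepsilon$ is taken sufficiently small.
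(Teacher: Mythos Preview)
Your proposal is essentially correct and mirrors the proof of Lemma~\ref{lem:G:open}, which is exactly what the paper does (it simply writes ``Arguing in much the same way as in Lemma~\ref{lem:G:open}'').

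One technical caveat: your claim that $(w,s)\mapsto \gamma(-s,w)$ is a $C^1$-diffeomorphism from $\pa\Omo\times(0,s_0)$ onto a tubular neighborhood is not warranted by the available regularity. The paper only grants $u\in C^1(\overline{\Omega})$, and it explicitly notes inside the proof of Lemma~\ref{lem:G:open} that uniqueness of $\gamma(\cdot,z)$ for $z\in\pa\Omega$ cannot be guaranteed; the same caveat undermines the group-law identity $\gamma(t,\xi)=\gamma(-(t_\xi-t),z_0)$ you invoke. This does not damage your conclusion, however: the openness part of your argument already shows that every $z_0\in\pa\Omo$ has a relative half-neighborhood in $\Omega$ contained in $\Go$, and since $\pa\Omo$ is a connected curve these patch together into a connected neighborhood $\mathcal{T}\subset\Go$ of $\pa\Omo$. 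For any $\xi\in\Go$, continuity of $t\mapsto\gamma(t,\xi)$ then places $\gamma(t,\xi)\in\mathcal{T}$ for $t$ close to $t_\xi$, without any appeal to the backward flow from $z_0$. This is precisely how the paper phrases the analogous step for $\Gi$ (``$\Gi$ contains a neighborhood of $\partial\Omi$; since $\pa\Omi$ is connected, so is $\Gi$'').
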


\begin{corollary}\label{cor:GG:empty-intersection}
    $\overline{\Gi} \cap \Go = \emptyset$ and $\Gi \cap \overline{\Go} = \emptyset$. 
\end{corollary}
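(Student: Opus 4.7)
The plan is to prove the stronger fact that $\Gi \cap \Go = \emptyset$, from which the two closure statements follow by a short topological argument using the openness of $\Gi$ and $\Go$ (Lemmas \ref{lem:G:open} and \ref{lem:Gout:open}).

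For the disjointness, I would argue by contradiction. Suppose $z \in \Gi \cap \Go$, so there exist $t_1, t_2 \in \mathbb{R}$ with $\gamma(t_1, z) \in \pa\Omi$ and $\gamma(t_2, z) \in \pa\Omo$. If $z$ is a critical point of $u$, then uniqueness of the Cauchy problem \eqref{eq:gradflow2} (which holds for interior points by the analyticity of $u$ in $\Omega$) forces $\gamma(t, z) \equiv z$, whence $z \in \pa\Omi \cap \pa\Omo$; this is impossible since $z \in \Omega$ while $\pa\Omi$ and $\pa\Omo$ are disjoint subsets of $\pa\Omega$. If instead $z$ is a regular point, I would exploit the sign information from Lemma \ref{lem:critonboundary}. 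Consider first the case $\hi, \ho > 0$: at any boundary point $w \in \pa\Omega$, one has $\dot{\gamma} \cdot \nu = -\nabla u(w) \cdot \nu = -\pa u(w)/\pa \nu > 0$, so the flow velocity points strictly outward from $\Omega$ at the crossing times $t_i$. Consequently, for each $i \in \{1,2\}$, $\gamma(t, z) \in \Omega$ for $t$ slightly less than $t_i$ while $\gamma(t, z) \notin \overline{\Omega}$ for $t$ slightly greater than $t_i$, so $t_i$ must coincide with the right endpoint of the maximal interval $I(z)$. Since $I(z)$ has a unique right endpoint, $t_1 = t_2$, forcing $\gamma(t_1, z) \in \pa\Omi \cap \pa\Omo = \emptyset$, a contradiction. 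The case $\hi, \ho < 0$ proceeds identically, with ``right endpoint'' replaced by ``left endpoint'' throughout, since now $\dot{\gamma} \cdot \nu < 0$ on $\pa\Omega$.

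With $\Gi \cap \Go = \emptyset$ established, the closure assertions are routine: if $z \in \overline{\Gi} \cap \Go$, the openness of $\Go$ yields a neighborhood of $z$ contained in $\Go$, and this neighborhood must meet $\Gi$ by the definition of closure, contradicting $\Gi \cap \Go = \emptyset$. Swapping the roles of $\Gi$ and $\Go$ gives $\Gi \cap \overline{\Go} = \emptyset$. The only real subtlety is ensuring that a flow line genuinely exits $\overline{\Omega}$ at each boundary hit (rather than grazing tangentially or re-entering), and this is precisely what the strict inequality in Lemma \ref{lem:critonboundary} provides; no further regularity of $\pa\Omega$ or of $u$ is needed beyond what is already available.
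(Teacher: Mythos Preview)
Your proof is correct and follows essentially the same two-step strategy as the paper: first show $\Gi \cap \Go = \emptyset$, then deduce the closure statements from the openness of $\Gi$ and $\Go$ (Lemmas~\ref{lem:G:open} and~\ref{lem:Gout:open}). The paper's justification of the disjointness step is terser, citing only Lemmas~\ref{lem:G:open}, \ref{lem:Gout:open} and ``the strict monotonicity of $u$ along the gradient flow''; your explicit use of Lemma~\ref{lem:critonboundary} to identify each boundary hit with the unique terminal endpoint of $I(z)$ is a clean way to make that step precise, and the closure argument is identical to the paper's.
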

\begin{proof}
   Lemmas~\ref{lem:G:open}, \ref{lem:Gout:open} and the strict monotonicity of $u$ along the gradient flow imply $\Gi \cap \Go = \emptyset$. 
   If, say, $\overline{\Gi} \cap \Go \neq \emptyset$, then there exists $z_0 \in \pa \Gi \cap \Go$. By the very definition of the boundary and by the openness of $\Go$, we can find $z_1 \in \Gi \cap \Go$, which is impossible.
\end{proof}

Using Lemma~\ref{lem:G:open} and arguing as in \cite[Lemma~4.4]{AVM} (see also \cite{weineffect}), we obtain the following result. 
We provide details for clarity. 
\begin{lemma}\label{lem:G:flow}
Let $z_0 \in \partial \Gi \cap \Omega$.
Then $\gamma(t,z_0) \in \partial \Gi \cap \Omega$ for any $t \in \mathbb{R} \cup \{\pm\infty\}$.
\end{lemma}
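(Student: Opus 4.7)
The plan is to verify, for each $t \in \R$, three facts:
(a) $\gamma(t,z_0) \in \Om$;
(b) $\gamma(t,z_0) \notin \Gi$;
(c) $\gamma(t,z_0) \in \overline{\Gi}$.
Combined with the openness of $\Gi$ (Lemma~\ref{lem:G:open}), these give $\gamma(t,z_0) \in \pa\Gi \cap \Om$, and the cases $t = \pm\infty$ will follow from a closure argument. If $z_0$ is a critical point of $u$, the flow is stationary and the claim is trivial, so we may assume $z_0$ is regular.

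For (a), I would argue by contradiction. If $\gamma(\cdot,z_0)$ reaches $\pa\Omi$ at some finite time, then the very definition \eqref{def:G} of $\Gi$ would give $z_0 \in \Gi$, contradicting $z_0 \in \pa\Gi$. If instead it reaches $\pa\Omo$, then $z_0 \in \Go$, contradicting $\overline{\Gi} \cap \Go = \emptyset$ from Corollary~\ref{cor:GG:empty-intersection}. The sign of the time at which such a collision could a priori occur is prescribed by Lemma~\ref{lem:traj:boundry} together with the strict monotonicity of $u$ along flow lines; on the opposite side $\gamma(\cdot,z_0)$ cannot exit $\Om$ by the same monotonicity. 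Hence $\gamma(t, z_0) \in \Om$ for every $t \in \R$. For (b), if $\gamma(t^{\ast}, z_0) \in \Gi$ for some $t^{\ast}$, then the flow from $\gamma(t^{\ast}, z_0)$ reaches $\pa\Omi$, and by the semigroup property so does $\gamma(\cdot, z_0)$, giving $z_0 \in \Gi$, a contradiction.

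For (c), I would use continuous dependence on initial data together with approximation. Pick $\{z_n\} \subset \Gi$ with $z_n \to z_0$, which is possible since $z_0 \in \pa\Gi \subset \overline{\Gi}$. For any fixed $t \in \R$, the set $\gamma([0,t], z_0)$ (or $\gamma([t,0], z_0)$ if $t<0$) is compact in $\Om$ by (a), hence at positive distance from $\pa\Om$. Continuous dependence for \eqref{eq:gradflow2} on this compact time interval then ensures that for $n$ large, $\gamma(\cdot, z_n)$ is defined on the same interval, stays in $\Om$, and converges uniformly to $\gamma(\cdot, z_0)$. As $\Gi$ is invariant under the flow while the flow stays in $\Om$ (by the semigroup argument of (b)), we have $\gamma(t, z_n) \in \Gi$, whence $\gamma(t, z_0) \in \overline{\Gi}$. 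The main technical subtlety is precisely here: a priori $\gamma(\cdot, z_n)$ may hit $\pa\Omi$ at some time $t_n$ before $t$, but the compactness argument forces $t_n > t$ for all $n$ sufficiently large. Finally, for $t = \pm\infty$: by (a) the flow is defined on all of $\R$ and remains in the bounded set $\Om$; the \L{}ojasiewicz-type convergence underlying Remark~\ref{rem:thom-conjecture} yields limits $z_{\pm\infty} = \lim_{t \to \pm\infty} \gamma(t, z_0)$, which are critical points of $u$ lying in $\Om$ by Lemma~\ref{lem:critonboundary}. Since $\pa\Gi \cap \Om$ is relatively closed in $\Om$ and contains the orbit $\{\gamma(t, z_0): t \in \R\}$, it contains $z_{\pm\infty}$ as well.
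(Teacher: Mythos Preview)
Your proof is correct and follows essentially the same strategy as the paper: first show the orbit never leaves $\Omega$ (using openness of $\Gi$, Corollary~\ref{cor:GG:empty-intersection}, and Lemma~\ref{lem:critonboundary}), then use continuous dependence on initial data to transport the boundary property along the flow. The paper packages your steps (b) and (c) into a single sentence---any ball $B_\delta(\gamma(t,z_0))$ contains both points of $\Gi$ and of $\Omega\setminus\Gi$---while you argue the two inclusions $\gamma(t,z_0)\notin\Gi$ and $\gamma(t,z_0)\in\overline{\Gi}$ separately; your treatment of the subtlety that $\gamma(\cdot,z_n)$ might hit $\pa\Omi$ before time $t$ is more explicit than the paper's, but the underlying idea is identical.
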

\begin{proof}
Since $\Gi$ is open, we have $z_0 \not\in \Gi$. 
Thus, $\gamma(\cdot,z_0)$ cannot reach $\pa\Omi$ in finite time. 
Thanks to Corollary~\ref{cor:GG:empty-intersection}, $\gamma(\cdot,z_0)$ cannot reach $\pa\Omo$ in finite time, too. 
Moreover, in view of Lemma~\ref{lem:critonboundary}, $\gamma(\cdot,z_0)$ cannot reach $\pa\Omi$ and $\pa\Omo$ as $t \to \pm\infty$ as well. 
That is, $\gamma(t,z_0) \in \Omega$ for any $t \in [-\infty,+\infty]$. 
Assume that $z_0$ is a regular point of $u$. 
For any $\delta>0$ and $t\in (-\infty,+\infty)$ consider $B_\delta(\gamma(t,z_0))$. By the definition of $\Gi$ and the continuous dependence of the Cauchy problem on the initial data,  we can observe that some of the flow lines emanating from $B_\delta(\gamma(t,z_0))$ terminate on $\partial\Omi$ and others do not terminate on $\partial\Omi$. Thus, $B_\delta(\gamma(t,z_0))$  intersects both $\Gi$ and $\Omega\setminus \Gi$, and hence $\gamma(t,z_0)\in \partial \Gi \cap \Om$ for any $t\in [-\infty,+\infty]$. 
\end{proof}

Our next aim is the following description of $\pa \Gi \cap \Omega$. 
We refer to \cite[Proposition~4.19]{AVM} for a related result, whose proof, however, is completely different. 
\begin{lemma}\label{lem:G:boundary}
$\partial \Gi \cap \Omega$ consists of finitely many flow lines (together with their end-points), arcs of critical points, and isolated critical points. 
\end{lemma}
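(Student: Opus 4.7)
My plan is to decompose $\partial\Gi\cap\Omega$ according to whether each point is critical or regular for $u$, and then analyze each piece separately. First I would observe that $\partial\Gi\cap\Omega$ is compact and that $u$ is real analytic on an open neighborhood of it: indeed, it is closed in $\Omega$, while Lemma~\ref{lem:G:open} combined with the observation inside its proof that $\Gi$ contains a neighborhood of $\pa\Omi$ yields $\partial\Gi\cap\pa\Omega=\emptyset$, so $\partial\Gi\cap\Omega$ lies at a positive distance from $\pa\Omega$. By Lemma~\ref{lem:isol}, the critical set of $u$ in $\Omega$ consists of at most finitely many isolated critical points together with at most one analytic Jordan curve $\theta$. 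The isolated critical points lying in $\partial\Gi$ are finite in number and directly provide the ``isolated critical points'' part of the decomposition, while $\theta\cap\partial\Gi$ will eventually yield the ``arcs of critical points'' part once its components are understood.

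For the regular part, Lemma~\ref{lem:G:flow} implies that every regular $z_0\in\partial\Gi\cap\Omega$ lies on a full orbit $\gamma(\cdot,z_0)\subset\partial\Gi\cap\Omega$. Since this orbit stays in a compact region on which $u$ is analytic, the \L{}ojasiewicz gradient inequality for analytic functions ensures that $\gamma(\cdot,z_0)$ has finite length on each half-line and therefore converges, as $t\to\pm\infty$, to single critical points of $u$ (either isolated or lying on $\theta$). Hence each regular orbit, together with its two end-points, is a flow line of the type claimed in the statement.

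The main obstacle is the finiteness of the resulting collection of flow lines and of the components of $\theta\cap\partial\Gi$. My approach is to exploit the local right-equivalence classification from Section~\ref{section:critpoints}: near any isolated critical point $z_0$, $u$ is reducible to one of the polynomial normal forms $u(z_0)\pm x^2\pm y^2$ or $u(z_0)\pm x^k\pm y^2$ with $k\ge 3$, and a direct inspection of the gradient flow of each such model shows that the stable and unstable sets of $z_0$ consist of finitely many analytic rays emanating from $z_0$. Any orbit of $\partial\Gi$ that admits $z_0$ as an $\alpha$- or $\omega$-limit must coincide locally with one of these rays, so only finitely many orbits of $\partial\Gi$ accumulate at $z_0$. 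A parallel analysis at a point of $\theta$, where the semi-degenerate normal form $u(z_0)\pm y^2$ governs the direction transverse to $\theta$, prevents the attachment points of orbits of $\partial\Gi$ from accumulating on $\theta$, and hence forces $\theta\cap\partial\Gi$ to have only finitely many connected components (arcs or isolated points). Each regular orbit of $\partial\Gi\cap\Omega$ is then uniquely labeled by its pair of limits (isolated critical points or components of $\theta\cap\partial\Gi$) together with a finite amount of local branching data at each end, so the total collection is finite, yielding the desired decomposition.
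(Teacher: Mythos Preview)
Your strategy diverges from the paper's and contains a genuine gap at the finiteness step. The claim that for each isolated critical point $z_0$ ``the stable and unstable sets of $z_0$ consist of finitely many analytic rays emanating from $z_0$'' fails precisely at the critical points that matter here: by Lemma~\ref{cor:crit} the function $u$ admits interior local maxima (in the case $\hi,\ho>0$, say), and such a point $z_0$ is a \emph{source} for the flow $\dot z=-\nabla u(z)$, so its unstable set is a full open neighborhood rather than a finite union of rays. The same phenomenon occurs along the curve $\theta$, which by the proof of Lemma~\ref{lem:isol} consists of local maxima. Consequently, uncountably many distinct orbits can have $z_0$ (or a point of $\theta$) as their $\alpha$-limit, and the ``finite amount of local branching data at each end'' you rely on is simply not available there. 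Your concluding labeling argument therefore does not establish finiteness, and the parallel claim that attachment points of orbits cannot accumulate on $\theta$ is likewise unsupported by the transverse normal form $u(z_0)\pm y^2$ alone.

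The paper avoids local branch-counting altogether and argues by contradiction using the global topology of $\Gi$. If infinitely many flow lines $A_iB_i\subset\partial\Gi\cap\Omega$ exist, then since $u$ is strictly monotone along each of them, not both endpoints can lie on $\theta$; pigeonholing over the finitely many isolated critical points forces infinitely many of the $A_i$ to coincide at some isolated $z_1$, and the corresponding $B_i$ to either coincide at some isolated $z_2$ or all lie on $\theta$. In either case one obtains infinitely many pairwise disjoint (except at endpoints) arcs of $\partial\Gi$ joining the same two ``ends'', which is incompatible with $\Gi$ being open and connected (Lemma~\ref{lem:G:open}): already three such arcs trap a portion of $\Gi$ away from the neighborhood of $\pa\Omi$ that $\Gi$ contains. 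Thus the decisive ingredient is the connectedness of $\Gi$, not the local structure of the flow near a single critical point.
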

\begin{proof}
Due to Lemma~\ref{lem:G:flow}, the flow line emanating from any regular point on $\pa \Gi \cap \Omega$ fully belongs to $\pa \Gi \cap \Omega$.
Suppose that the number of such flow lines is infinite. 
Denote them as $A_1B_1$, $A_2B_2$, $\dots$, where the end-points $A_i$, $B_i$ are critical points of $u$. 
According to Lemma~\ref{lem:isol}, $u$ has at most finitely many isolated critical points and at most one Jordan curve $\theta$ of critical points, and $u$ is constant along $\theta$. 
Therefore, up to relabeling, we have two cases:
\begin{enumerate}
    \item There are two isolated critical points $z_1$, $z_2$, and $N \geq 1$ such that $A_i = z_1$, $B_i = z_2$ for all $i \geq N$.
    \item There is an isolated critical point $z_1$ and $N \geq 1$ such that $A_i =z_1$, $B_i \in \theta$ for all $i \geq N$.
\end{enumerate}
Since $A_iB_i \subset \pa \Gi$ for each $i$, we get a contradiction to the openness and connectedness of $\Gi$ (see Lemma~\ref{lem:G:open}). 
\end{proof}

The same facts as in Lemmas~\ref{lem:G:flow} and \ref{lem:G:boundary} remain valid for $\pa \Go \cap \Omega$. 

In order to avoid possible isolated critical points and cracks in $\partial \Gi \cap \Om$ and $\partial \Go \cap \Om$, we regularize $\Gi$ and $\Go$ by considering the sets
\begin{equation}\label{eq:g-star}
\Gi^* := \text{Int}(\overline{\Gi})
\quad \text{and} \quad 
\Go^* := \text{Int}(\overline{\Go}).
\end{equation}
Clearly, we have $\pa \Gi^* \subset \pa \Gi$ and $\pa \Go^* \subset \pa \Go$. 
\begin{lemma}\label{lem:G:doubly-connected}
$\partial \Gi^* \cap \Omega = \partial \Go^* \cap \Omega$ and $\Gi^*$, $\Go^*$ are doubly connected domains.
\end{lemma}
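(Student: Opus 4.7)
The plan is to establish the cleaner alternative characterization
$$\Gi^* = \Omega \setminus \overline{\Go} \quad\text{and}\quad \Go^* = \Omega \setminus \overline{\Gi},$$
from which both assertions of the lemma will follow with little extra work.

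The central step is to show that $E := \Omega \setminus (\overline{\Gi} \cup \overline{\Go})$ is empty. I will argue that the larger set $E' := \Omega \setminus (\Gi \cup \Go)$ has $2$-dimensional Lebesgue measure zero. Indeed, by the Arnold--Thom convergence (Remark~\ref{rem:thom-conjecture}), every $z \in E'$ lies on a flow line that stays in $\Omega$ for all times and converges in both directions to critical points of $u$, and by Lemma~\ref{lem:isol} the critical set consists of finitely many isolated critical points together with at most one analytic curve. For a.e.\ regular value $c$ of $u$ (Sard's theorem), the level curve $L_c = \{u = c\} \cap \Omega$ meets $E'$ in only finitely many points, since each such intersection is the starting time of a flow line that converges forward to one of the finitely many critical points or arcs; the coarea formula then yields $|E'| = 0$. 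Since $E \subset E'$ is open, $E$ must be empty. Consequently $\Omega \setminus \overline{\Go}$ is an open subset of $\overline{\Gi}$, hence is contained in $\text{Int}(\overline{\Gi}) = \Gi^*$. The reverse inclusion $\Gi^* \subset \Omega \setminus \overline{\Go}$ is immediate: any $z \in \Gi^*$ admits a neighborhood $U \subset \overline{\Gi}$, and Corollary~\ref{cor:GG:empty-intersection} gives $U \cap \Go = \emptyset$, whence $z \notin \overline{\Go}$.

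The equality of boundaries is then a direct calculation. The inclusions $\Gi \subset \Gi^* \subset \overline{\Gi}$ imply $\overline{\Gi^*} = \overline{\Gi}$, hence
$$\partial\Gi^* \cap \Omega = (\overline{\Gi} \cap \Omega) \setminus \Gi^* = \overline{\Gi} \cap \overline{\Go} \cap \Omega,$$
using $\Omega \setminus \Gi^* = \overline{\Go} \cap \Omega$ for the second equality. The right-hand side is symmetric in $\Gi$ and $\Go$, so the same computation yields $\partial\Go^* \cap \Omega = \overline{\Gi} \cap \overline{\Go} \cap \Omega$ as well.

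For the topological statement, connectedness of $\Gi^*$ follows from the connectedness of $\Gi$ (Lemma~\ref{lem:G:open}) together with the inclusions $\Gi \subset \Gi^* \subset \overline{\Gi}$. To deduce double connectedness, I would verify that $\mathbb{R}^2 \setminus \Gi^*$ has exactly two connected components: the first is $\overline{\Omi}$, which is closed, connected, and separated from the rest of the complement because $\Gi^*$ contains a tubular neighborhood of $\partial\Omi$ in $\Omega$ (inherited from $\Gi$, as shown in the proof of Lemma~\ref{lem:G:open}); the second is the union $(\mathbb{R}^2 \setminus \Omo) \cup (\Omega \setminus \Gi^*)$, which is connected because $\Omega \setminus \Gi^* = \overline{\Go} \cap \Omega$ is the closure in $\Omega$ of the connected set $\Go$ (Lemma~\ref{lem:Gout:open}) and meets $\mathbb{R}^2 \setminus \Omo$ along $\partial\Omo$. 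The same reasoning applied to $\Go^*$ completes the proof. The main obstacle I anticipate is the measure-zero argument for $E'$: although it is morally a Morse-theoretic fact, the possibility of semi-degenerate critical points and of an entire analytic curve $\theta$ of critical points prevents the use of a standard Morse decomposition, and the coarea-plus-Sard workaround at regular level sets is what carries this step through.
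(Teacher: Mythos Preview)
Your overall architecture is sound and genuinely different from the paper's: you aim to show the ``gap'' set $E=\Omega\setminus(\overline{\Gi}\cup\overline{\Go})$ is empty by first proving $|E'|=0$ for the larger $E'=\Omega\setminus(\Gi\cup\Go)$, and then everything else (the identities $\Gi^*=\Omega\setminus\overline{\Go}$, the boundary equality, the double connectedness) follows cleanly as you outline. Those downstream steps are correct.

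The gap is in the measure-zero step. Your sentence ``$L_c$ meets $E'$ in only finitely many points, since each such intersection is the starting time of a flow line that converges forward to one of the finitely many critical points or arcs'' does not yet prove finiteness: many distinct flow lines can converge to the \emph{same} critical point, so finiteness of targets does not give finiteness of flow lines. What you actually need is that for each isolated critical point $z_0$ which is not a local extremum, the stable set $W^s(z_0)=\{z:\gamma(t,z)\to z_0 \text{ as } t\to+\infty\}$ consists of only finitely many flow lines (equivalently, has measure zero). This is true under the classification in Section~\ref{section:critpoints}, but it is not a one-liner: for nondegenerate saddles it is the Stable Manifold Theorem, while for the semi-degenerate normal forms $u(z_0)\pm x^k\pm y^2$ one needs a local analysis (and the right-equivalence is only $C^1$, so it does not transport the gradient flow directly). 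Without this, your coarea argument does not close.

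The paper sidesteps the dynamical analysis entirely by using the PDE. If the gap set were nonempty, any connected component $\Omega_0$ of it would have boundary contained in $(\partial\Gi^*\cup\partial\Go^*)\cap\Omega$, which by Lemma~\ref{lem:G:boundary} is a finite union of flow lines and critical arcs, so $\partial u/\partial\nu=0$ on each smooth piece of $\partial\Omega_0$. Integrating $-\Delta u=\lambda_1^{\mathcal{RR}}(\Omega)\,u$ over $\Omega_0$ and applying the divergence theorem yields $\lambda_1^{\mathcal{RR}}(\Omega)\int_{\Omega_0}u\,dx=0$, contradicting $u>0$ and $\lambda_1^{\mathcal{RR}}(\Omega)\neq 0$. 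This is shorter and avoids any stable-manifold technology; your route would also work once the missing local fact about $W^s(z_0)$ is supplied, and it has the appeal of being PDE-free.
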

\begin{proof}
If we suppose that $\partial \Gi^* \cap \Omega \neq \partial \Go^* \cap \Omega$, then it is not hard to see that $\Omega \setminus \overline{(\Gi^* \cup \Go^*)}$ is nonempty. 
Take any connected component $\Omega_0$ of this set and note that its boundary is a subset of $(\partial \Gi^* \cup \partial \Gi^*) \cap \Omega$. 
We deduce from Lemma~\ref{lem:G:boundary} that $\pa {\Omega}_0$ is a finite union of piecewise smooth Jordan curves and, in view of the properties of the gradient flow, $\partial u/\partial \nu = 0$ on each smooth piece of $\partial {\Omega}_0$. 
Integrating the equation in \eqref{cutproblem2d} over ${\Omega}_0$, recalling that $\overline{\Omega_0} \subset \Omega$ and $u \in C^\infty(\Omega)$, and applying the divergence theorem (see, e.g., \cite[Theorem~9.6]{maggi2012sets}), we obtain
$$
\lambda_1^\mathcal{RR}(\Omega)
\int_{{\Omega}_0} u \dx
=
-
\int_{{\Omega}_0} \Delta u \dx
=
-
\int_{\partial {\Omega}_0}
\frac{\partial u}{\partial \nu} \dS
=
0.
$$
Since $\lambda_1^\mathcal{RR}(\Omega) \neq 0$ (see Remark~\ref{rem:sign}), 
we get a contradiction to the positivity of $u$.  
Therefore, we have $\partial \Gi^* \cap \Omega = \partial \Go^* \cap \Omega$.

It follows from Lemma~\ref{lem:G:open} that $\Gi^*$ is a domain. Moreover, it is not hard to conclude from Lemma~\ref{lem:critonboundary} that $\Omi$ is a ``hole'' in $\Gi^*$. 
Similar facts hold for $\Go^*$. 
Thus, if we suppose that, say, $\Gi^*$ has another ``hole'', then we get a contradiction to the equality $\partial \Gi^* \cap \Omega = \partial \Go^* \cap \Omega$. 
Since $\pa \Gi^*$, $\pa \Go^*$ do not contain isolated points and cracks by construction, we conclude that $\Gi^*$, $\Go^*$ are doubly connected domains.
\end{proof}

\begin{corollary}
    $\Go^* = \Omega \setminus \overline{\Gi^*}$.
\end{corollary}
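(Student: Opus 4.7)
The plan is to prove $\Go^* = \Om \setminus \overline{\Gi^*}$ by two inclusions, using three ingredients: the openness of $\Gi$ (Lemma~\ref{lem:G:open}), the identity $\pa \Gi^* \cap \Om = \pa \Go^* \cap \Om$ (Lemma~\ref{lem:G:doubly-connected}), and the coverage $\Om \subset \overline{\Gi^*} \cup \overline{\Go^*}$. The last is implicit in the proof of Lemma~\ref{lem:G:doubly-connected}: the integration argument applied to any hypothetical connected component of $\Om \setminus \overline{\Gi^* \cup \Go^*}$ produces a contradiction independently of whether the inner boundaries match, so the coverage holds unconditionally.

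The pivotal step will be the disjointness $\Gi^* \cap \Go^* = \emptyset$. Any point of this open intersection would lie in an open ball $B \subset \overline{\Gi} \cap \overline{\Go}$. By Lemma~\ref{lem:G:boundary}, $\pa \Gi \cap \Om$ is a finite union of flow lines, arcs of critical points, and isolated critical points, hence at most one-dimensional, so $B \not\subset \pa \Gi$ and $B \cap \Gi \neq \emptyset$. The openness of $\Gi$ then furnishes a neighborhood of some $w \in B \cap \Gi$ entirely inside $\Gi$; on the other hand, $w \in B \subset \overline{\Go}$ provides a sequence in $\Go$ converging to $w$, contradicting $\Gi \cap \Go = \emptyset$.

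Granted the disjointness, the inclusion $\Go^* \subset \Om \setminus \overline{\Gi^*}$ follows by observing that any $z \in \Go^* \cap \overline{\Gi^*}$ would land in $\pa \Gi^* \cap \Go^* = \pa \Go^* \cap \Go^* = \emptyset$ via Lemma~\ref{lem:G:doubly-connected}. For the reverse inclusion, given $z \in \Om \setminus \overline{\Gi^*}$, an open neighborhood $U$ of $z$ inside this set must be contained in $\overline{\Go^*}$ by the coverage, placing $z$ in $\mathrm{Int}(\overline{\Go^*})$. The chain $\Go \subset \Go^* \subset \overline{\Go}$ gives $\overline{\Go^*} = \overline{\Go}$, and hence $\mathrm{Int}(\overline{\Go^*}) = \mathrm{Int}(\overline{\Go}) = \Go^*$, so $z \in \Go^*$.

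The only nontrivial point is the disjointness step, and its entire difficulty is concentrated in the appeal to Lemma~\ref{lem:G:boundary}: without the structural fact that $\pa \Gi \cap \Om$ is one-dimensional, an open intersection $\Gi^* \cap \Go^*$ lying within the common boundary $\overline{\Gi} \cap \overline{\Go}$ could not be excluded on purely point-set grounds.
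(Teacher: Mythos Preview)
Your proof is correct, and the paper states the corollary without proof, so there is no argument to compare against directly. However, your closing paragraph contains a mathematical misconception worth correcting.

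The disjointness $\Gi^* \cap \Go^* = \emptyset$ does \emph{not} require Lemma~\ref{lem:G:boundary}; it follows from pure point-set topology together with Corollary~\ref{cor:GG:empty-intersection}. Indeed, for any open set $A$ the boundary $\pa A$ has empty interior: if an open ball $B$ satisfied $B \subset \overline{A}$ and $B \cap A = \emptyset$, then every point of $B$ would be a limit of points of $A$, yet $B$ itself is an open neighborhood disjoint from $A$, a contradiction. Hence in your argument the step ``$B \not\subset \pa\Gi$, so $B \cap \Gi \neq \emptyset$'' holds automatically because $\Gi$ is open (Lemma~\ref{lem:G:open}), without knowing anything about the one-dimensional structure of $\pa\Gi \cap \Om$. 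In fact, one can bypass the disjointness step entirely: from $\Gi \cap \overline{\Go} = \emptyset$ (Corollary~\ref{cor:GG:empty-intersection}) we get $\overline{\Go} \subset \Gi^c$, hence
\[
\Go^* = \mathrm{Int}(\overline{\Go}) \subset \mathrm{Int}(\Gi^c) = (\overline{\Gi})^c \subset (\overline{\Gi^*})^c,
\]
which already gives the inclusion $\Go^* \subset \Om \setminus \overline{\Gi^*}$ in one line. Your reverse inclusion via the coverage $\Om \subset \overline{\Gi^*} \cup \overline{\Go^*}$ (extracted from the integration argument of Lemma~\ref{lem:G:doubly-connected}) and the identity $\mathrm{Int}(\overline{\Go^*}) = \Go^*$ is clean and correct. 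So the argument stands; only the assessment of where the difficulty lies is off.
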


Finally, with a slight abuse of notation, we define the \textit{effectless cut} of $\Omega$ as the set\footnote{At the end of his arguments, \textsc{Weinberger} \cite{weineffect} also removes isolated critical points from $\pa \Gi \cap \Omega$. Here we propose to avoid possible cracks on $\pa \Gi$, as well.} 
\begin{equation}\label{eq:eff-star}
E = \partial \Gi^* \cap \Omega. 
\end{equation}
In view of Lemmas~\ref{lem:G:open}, \ref{lem:G:boundary}, and \ref{lem:G:doubly-connected}, we have the following result.
\begin{lemma}\label{lem:E:properties}
    The effectless cut $E$ satisfies the following properties: 
    \begin{enumerate}[(1)]
    	\item $E = \partial \Go^* \cap \Omega$.
        \item\label{lem:E:properties:1} $E$ is a simple closed curve (i.e., a Jordan curve) and it can be decomposed as
\begin{equation}\label{eq:E:decomp}
E = \bigcup_{i = 1}^n E_i,
\end{equation}
where $n \geq 1$ and each $E_i$ is a smooth arc of finite length, which is either a flow line (together with its end-points) or an arc of critical points of $u$. 
If $E_i$ intersects $E_j$, then the intersection consists of at most two points, each of which is a critical point of $u$.
      \item\label{lem:E:properties:2} $E \subset \Omega$ and $\Omi$ is compactly contained in a domain bounded by the Jordan curve $E$.
    \end{enumerate}
\end{lemma}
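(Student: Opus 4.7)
The proof essentially assembles Lemmas~\ref{lem:G:open}, \ref{lem:G:boundary}, and \ref{lem:G:doubly-connected} with a small amount of planar topology. The equality $E=\pa\Go^*\cap\Om$ is immediate from \eqref{eq:eff-star} together with Lemma~\ref{lem:G:doubly-connected}, and the inclusion $E\subset\Om$ is built into the very definition of $E$.

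For the decomposition in \ref{lem:E:properties:1}, I would first note that $\Gi\subset\Gi^*=\text{Int}(\overline{\Gi})$ implies $\pa\Gi^*\subset\pa\Gi$, so that $E\subset\pa\Gi\cap\Om$. By Lemma~\ref{lem:G:boundary}, the latter set is a finite union of smooth flow-line arcs (together with their critical end-points), analytic arcs of critical points, and isolated critical points. The operation $\Gi\mapsto\text{Int}(\overline{\Gi})$ is designed to delete exactly those pieces of $\pa\Gi$ that are either cracks (i.e.\ curves having $\Gi$ on both sides, which therefore lie in $\text{Int}(\overline{\Gi})$) or isolated critical points already possessing a neighborhood inside $\overline{\Gi}$. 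What survives in $E$ is thus a finite family of smooth arcs, each a full flow line with critical end-points or an analytic arc of critical points, giving \eqref{eq:E:decomp}.

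The single-Jordan-curve property is the key geometric point, and I would derive it from Lemma~\ref{lem:G:doubly-connected}: since $\Gi^*$ is a doubly connected planar domain, $\pa\Gi^*$ has exactly two connected components. By Lemma~\ref{lem:traj:boundry} and the openness argument used in the proof of Lemma~\ref{lem:G:open}, $\Gi$ (and hence $\Gi^*$) contains a one-sided neighborhood of $\pa\Omi$, so one of these components is $\pa\Omi$; the other lies entirely in $\Omega$ and must coincide with $E$, which is therefore a Jordan curve. The intersection property in \ref{lem:E:properties:1} then follows from the fact that two distinct smooth arcs of a Jordan curve can meet only at common end-points, of which there are at most two, and these end-points are critical points of $u$ in view of Lemma~\ref{lem:isol} and the structure of flow-line arcs. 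Assertion \ref{lem:E:properties:2} is then an application of the Jordan--Schoenflies theorem: $\Om\setminus E$ splits into a bounded domain, which equals $\Gi^*\cup\Omi$, and an unbounded complementary piece, so $\overline{\Omi}$ is compactly contained in the former.

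The main obstacle I see is the passage from the possibly wild description of $\pa\Gi\cap\Om$ in Lemma~\ref{lem:G:boundary} (potentially with cracks and isolated critical points) to the clean Jordan-curve picture for $E$. The whole purpose of introducing the regularized sets in \eqref{eq:g-star} was to eliminate this discrepancy, and Lemma~\ref{lem:G:doubly-connected} has already taken care of the delicate topological step; the present lemma is essentially a bookkeeping statement collecting the consequences of that fact.
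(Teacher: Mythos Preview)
Your proposal is correct and follows exactly the paper's approach: the paper's own proof is the single sentence ``In view of Lemmas~\ref{lem:G:open}, \ref{lem:G:boundary}, and~\ref{lem:G:doubly-connected}, we have the following result,'' and you have supplied the explicit assembly of those lemmas. One minor slip to fix: in your argument for \ref{lem:E:properties:2} you write that ``$\Om\setminus E$ splits into a bounded domain \ldots\ and an unbounded complementary piece,'' but $\Om$ is bounded; you mean $\mathbb{R}^2\setminus E$ (the Jordan curve theorem in the plane), whose bounded component contains $\Gi^*\cup\overline{\Omi}$ and whose unbounded component contains $\Go^*$.
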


\subsection{Proof of Theorem~\ref{thm:RR} in the case \texorpdfstring{$\hi \cdot \ho > 0$}{h\_out*h\_in>0}}\label{sec:proof_RR-2}

Our first aim is to prove that
\begin{equation}\label{eq:proof1:1}
	\lambda_1^{\mathcal{RR}}(\Om) 
	\leq
	\lambda_1^{\mathcal{RN}}(\Gi^*)
	\quad \text{and} \quad 
	\lambda_1^{\mathcal{RR}}(\Om) 
	\leq
	\lambda_1^{\mathcal{NR}}(\Go^*),
\end{equation}
where
$$
\lambda_1^{\mathcal{NR}}(\Go^*) 
=
\inf_{v \in \widetilde{H}^1(\Go^*)\setminus\{0\}}
\frac{\int_{\Go^*}|\nabla v|^2\dx+\ho \int_{\pa \Omo} v^2\dsi}{\int_{\Go^*} v^2\dx},
$$
and $\lambda_1^{\mathcal{RN}}(\Gi^*)$ is defined analogously. 

Let us prove, for instance, the second inequality in \eqref{eq:proof1:1}. 
We start by discussing the regularity of the effectless cut $E = \pa \Go^* \cap \Om = \pa \Gi^* \cap \Om$. 
According to Lemma~\ref{lem:E:properties}, $E$ has at most a finite number of points $z_1, \dots, z_n$ (which are necessarily critical points of $u$) at which the Lipschitzness of $\Go^*$ can be lost. We investigate it in more details. 

Let $E_1$ and $E_2$ be any two analytic arcs from the decomposition \eqref{eq:E:decomp} of $E$ which intersect each other at a critical point $z \in \Omega$.
If one of them is a flow line and the other is an arc of critical points of $u$, then, thanks to \cite[Lemma~2.27 (i)]{AVM}, $E_1$ meets $E_2$ at $z$ orthogonally. 
Thus, in this case, $\Go^*$ is Lipschitz in a neighborhood of $z$. 
If both $E_1$ and $E_2$ are arcs of critical points, then they belong to the same curve of critical points (see Lemma~\ref{lem:isol}), and hence $\Go^*$ is smooth in a neighborhood of $z$.
Assume now that $E_1$ and $E_2$ are two flow lines. 
In view of Remark~\ref{rem:thom-conjecture}, we can define the angle $\vartheta \in [0,2\pi]$ between the limits of normalized tangents to $E_1$ and $E_2$ at $z$. 
If $\vartheta \in (0,2\pi)$, then $\Go^*$ is Lipschitz in a neighborhood of $z$. 
The cases $\vartheta = 0$ and $\vartheta = 2\pi$ correspond to the outer cusp and the inner cusp, respectively. 
Lemma~\ref{lem:tangents} shows that for any sufficiently small $\varepsilon>0$ the curve $\partial B_\varepsilon(z)$ intersects $E_1$ and $E_2$ at positive angles approaching $\pi/2$ as $\varepsilon \to 0$. 
Notice, moreover, that $\partial B_\varepsilon(z)$ intersects $E_i$ only at a single point for each sufficiently small $\varepsilon>0$. Indeed, if we suppose that there is more than one point of intersection, then, by the continuity arguments, there will be $\varepsilon_1 \in (0,\varepsilon)$ at which $\partial B_{\varepsilon_1}(z)$ intersects $E_i$ tangentially, which contradicts Lemma~\ref{lem:tangents}.

Let $z_1, \dots, z_k$ be the intersection points of the flow lines from the decomposition \eqref{eq:E:decomp} of $E$ at which the Lipschitzness of $\Go^*$ fails. 
Then we conclude from the above discussion that the set 
$$
\Go^*(\varepsilon):= \Go^* \setminus \bigcup_{i = 1}^k B_\varepsilon(z_i)
$$
is a Lipschitz domain for any sufficiently small $\varepsilon>0$.
As a consequence, according to \cite[Theorem~8]{smith1994smooth}, 
$C^\infty(\mathbb{R}^2)$ is dense in $H^1(\Go^*)$, which means that $C^\infty(\overline{\Go^*})$ is dense in $H^1(\Go^*)$. 
Moreover, arguing via the partition of unity, it is not hard to see that, in the case $\ho = +\infty$, $C^\infty(\overline{\Go^*})$-functions vanishing in a neighborhood of $\pa\Omo$ are dense in $\widetilde{H}^1(\Go^*)$. 

Take any $v \in C^\infty(\overline{\Go^*}) \setminus \{0\}$. 
In the case $\ho = +\infty$, we additionally assume that $v$ vanishes in a neighborhood of $\pa\Omo$. 
Since $u \in C^\infty(\Omega)$ and $u>0$ in $\Omega$, the classical Picone identity gives the pointwise estimate  in $\Go^*$:
\begin{equation}\label{eq:pic0}
|\nabla v|^2 - \left< \nabla u, \nabla \left(\frac{v^2}{u}\right)\right>
=
|\nabla v|^2 + \frac{v^2}{u^2} |\nabla u|^2 - 2 \frac{v}{u} \left<\nabla u, \nabla v \right>
=
\left|
\nabla v - \frac{v}{u} \nabla u
\right|^2
\geq 0. 
\end{equation}
Integrating over $\Go^*$, we get
\begin{equation}\label{eq:ibp2x0}
\int_{\Go^*}
\left<\nabla u, \nabla \left(\frac{v^2}{u}\right)\right> \dx
\leq
\int_{\Go^*}
|\nabla v|^2 \dx.
\end{equation}
Now we would like to integrate by parts the left-hand side of \eqref{eq:ibp2x0}. 
However, the integrand might have insufficient regularity to directly apply \cite[Theorem~9.6]{maggi2012sets}. 
We use approximation arguments instead. 
Namely, it is not hard to see that $u, v^2/u \in H^1(\Go^*)$: in the case $\ho<+\infty$, we have $u>0$ in $\overline{\Go^*}$ by Hopf's lemma, and in the case $\ho=+\infty$, we recall that $v$ vanishes in a neighborhood of $\pa\Omo$, implying $v^2/u \in C^1(\overline{\Go^*})$. 
Consequently, we are able to integrate by parts over the Lipschitz domain $\Go^*(\varepsilon)$ for any sufficiently small $\varepsilon>0$ (see, e.g., \cite[Theorem~4.4]{McLean}) and get
\begin{equation}\label{eq:ibp}
	\int_{\Go^*(\varepsilon)}
	\left<\nabla u, \nabla \left(\frac{v^2}{u}\right)\right> \dx
	=
	-\int_{\Go^*(\varepsilon)} \Delta u  \left(\frac{v^2}{u}\right) \dx
	+
	\int_{\pa \Go^*(\varepsilon)} \frac{\partial u}{\partial \nu} \left(\frac{v^2}{u}\right) \dsi.
\end{equation}
In view of the problem \eqref{cutproblem2d}, we arrive at
\begin{align}
	\int_{\Go^*(\varepsilon)}
	\left<\nabla u, \nabla \left(\frac{v^2}{u}\right)\right>  \dx
	=
	\lambda_1^{\mathcal{RR}}(\Omega) \int_{\Go^*(\varepsilon)} v^2 \dx
	& -
	 \ho\int_{{\pa \Omo}} v^2 \dsi\\
	& +
	\int_{\pa \Go^*(\varepsilon) \cap \Omega} \frac{\partial u}{\partial \nu} \left(\frac{v^2}{u}\right) \dsi.\label{eq:ibp2}
\end{align}
Observe that the absolute continuity of the Lebesgue integral implies
$$
\int_{\Go^*(\varepsilon)} v^2 \dx 
\to
\int_{\Go^*}
v^2 \dx
\quad \text{as } \varepsilon \to 0.
$$
Moreover, since $v$ is bounded, $u \in C^1(\overline{\Omega})$, and $\partial u/\partial\nu = 0$ on every regular piece of $E$, we get
\begin{align}
&\left|
\int_{\pa \Go^*(\varepsilon) \cap \Omega} \frac{\partial u}{\partial \nu} \left(\frac{v^2}{u}\right) \dsi
\right|
=
\left|
\int_{\Go^* \cap \bigcup_{i = 1}^k \partial B_\varepsilon(z_i)} \frac{\partial u}{\partial \nu} \left(\frac{v^2}{u}\right) \dsi
\right|
\\
&\leq
\frac{\max_{\Omega} (|\nabla u| v^2)}{\min_{\bigcup_{i = 1}^k B_\varepsilon(z_i)} u}
\cdot 
\left|\Go^* \cap \bigcup_{i = 1}^k \partial B_\varepsilon(z_i)\right|_1 
\leq 
C \cdot \bigcup_{i = 1}^k \left|\partial B_\varepsilon(z_i)\right|_1 \to 0
\quad \text{as } \varepsilon \to 0.
\end{align}
On the other hand, similarity to \eqref{eq:ibp2x0}, the Picone identity \eqref{eq:pic0} gives
\begin{equation}\label{eq:ibp2x}
\int_{\Go^*(\varepsilon)}
\left<\nabla u, \nabla \left(\frac{v^2}{u}\right)\right> \dx
\leq
\int_{\Go^*(\varepsilon)}
|\nabla v|^2 \dx
\leq 
\int_{\Go^*}
|\nabla v|^2 \dx.
\end{equation}
Therefore, we conclude from \eqref{eq:ibp2} and \eqref{eq:ibp2x} that
$$
\lambda_1^{\mathcal{RR}}(\Omega) \int_{\Go^*} v^2 \dx
-
\ho\int_{\pa \Omo} v^2 \dsi
+ C(\varepsilon)
\leq
\int_{\Go^*}
|\nabla v|^2 \dx,
$$
where $C(\varepsilon) \to 0$ as $\varepsilon \to 0$, which yields
$$
\lambda_1^{\mathcal{RR}}(\Omega) 
\leq
\frac{\int_{\Go^*}
|\nabla v|^2 \dx
+
\ho\int_{\pa \Omo} v^2 \dsi}{\int_{\Go^*} v^2 \dx}. 
$$
Since $v \in C^\infty(\overline{\Go^*}) \setminus \{0\}$ is arbitrary and $C^\infty(\overline{\Go^*})$ is dense in $\widetilde{H}^1(\Go^*)$, we get the desired inequality 
$$
\lambda_1^{\mathcal{RR}}(\Om) 
\leq
\lambda_1^{\mathcal{NR}}(\Go^*).
$$
The same proof works for $\Gi^*$, which establishes \eqref{eq:proof1:1}. 
In fact, taking $u$ as a test function for $\lambda_1^{\mathcal{NR}}(\Go^*)$, $\lambda_1^{\mathcal{NR}}(\Gi^*)$ and integrating by parts over approximating domains as above, we also obtain the converse inequalities to \eqref{eq:proof1:1}. 
Therefore, 
\begin{equation}\label{eq:proof1:1x}
	\lambda_1^{\mathcal{RR}}(\Om) 
	=
	\lambda_1^{\mathcal{RN}}(\Gi^*)
	\quad \text{and} \quad 
	\lambda_1^{\mathcal{RR}}(\Om) 
	=
	\lambda_1^{\mathcal{NR}}(\Go^*).
\end{equation}

By Theorems~\ref{thm:NR1} and \ref{thm:RN1}, respectively, there exist $\sigma_1, \sigma_2 \in (r,R)$ such that  
\begin{gather}
	\label{eq:proof1:5}
	\lambda_1^{\mathcal{RN}}(\Gi^*)
	\leq 
	\lambda_1^{\mathcal{RN}}(\A_{r,\sigma_1}),
	\quad \text{where}~
	|\Gi^*|=|\A_{r,\sigma_1}|,~ |\pa \Om_{\text{in}}|_1=|\pa B_r|_1,\\
	\label{eq:proof1:4}
	\lambda_1^{\mathcal{NR}}(\Go^*)
	\leq 
	\lambda_1^{\mathcal{NR}}(\A_{\sigma_2,R}),
	\quad \text{where}~ 
	|\Go^*|=|\A_{\sigma_2,R}|,~ |\pa \Om_{\text{out}}|_{1}=|\pa B_R|_{1}.
\end{gather}
Since $|\Gi^*|+|\Go^*| = |\Omega| = |\A_{r,R}|$, we conclude that $\sigma_1=\sigma_2$. 
On the other hand, Lemma~\ref{lem:character} gives
$$
\lambda_1^{\mathcal{RR}}(\A_{r,R}) 
=
\max_{\delta \in (r,R)}
\min \{
\lambda_1^{\mathcal{RN}}(\A_{r,\delta}),
\lambda_1^{\mathcal{NR}}(\A_{\delta,R})
\},
$$
which implies that
\begin{equation}\label{eq:proof1:6}
	\min \{
\lambda_1^{\mathcal{RN}}(\A_{r,\sigma_1}),
\lambda_1^{\mathcal{NR}}(\A_{\sigma_1,R})
\}
	\}
	\leq 
	\lambda_1^{\mathcal{RR}}(\A_{r,R}). 
\end{equation}
Combining \eqref{eq:proof1:1} (or \eqref{eq:proof1:1x}), \eqref{eq:proof1:4}, \eqref{eq:proof1:5}, \eqref{eq:proof1:6}, we arrive at
\begin{align}
	\lambda_1^{\mathcal{RR}}(\Om) 
	\leq 
	\lambda_1^{\mathcal{RR}}(\A_{r,R}),
\end{align}
which finishes the proof of Theorem~\ref{thm:RR} in the case $\hi \cdot \ho > 0$.  
\qed

\section{Final remarks}\label{sec:remarks}

\begin{enumerate}
    \item As indicated in Table~\ref{tab1}, results in the spirit of Theorem~\ref{thm:RR} for the Robin parameters of different signs, i.e., $\hi \cdot \ho <0$, remain largely open, except for a ``limiting'' case studied in \cite{krejvcivrik2024optimisation,krejvcivrik2020optimisation2}. 
    
    \item 
    We anticipate that equalities should hold in Theorems~\ref{thm:NR1} and \ref{thm:RN1} (and, consequently, in Theorem~\ref{thm:RR}) 
    if and only if $\Omega = \A_{r,R}$. 
    However, the present arguments do not allow to conclude it directly due to the lack of rigidity in Nagy's inequalities (see Remark~\ref{rem:Nagy}), in contrast to the higher-dimensional case (cf.\ \cite[Remark~3.3]{AnoopMrityunjoy}). 
    We refer to \cite{cito2024stability} for a related development. 
    
    \item The arguments of \cite{weineffect} allow additional ``holes'' in $\Omega$, and \cite{hers} allows additional ``holes'' with the Neumann boundary conditions. 
    We do not pursue this generalization herein for the clarity of exposition, but the present analysis can be adopted for that. 
    At the same time, we do not know whether Theorem~\ref{thm:RR} can be generalized to more ``holes'' in $\Omega$ with non-Neumann boundary conditions.
     
    \item Assume that $\Omega$ is, say, a Lipschitz domain, and let $\{\Omega_n\}$ be a family of $C^{1,1}$-domains approximating $\Omega$ in a way that ensures $\lambda_1^\mathcal{RR}(\Omega_n) \to \lambda_1^\mathcal{RR}(\Omega)$. 
    Then the result of Theorem~\ref{thm:RR} generalizes to such less regular $\Omega$. 
    These spectral stability results are known in the case of positive Robin parameters (see, e.g., \cite{burenkov2008spectral} and \cite[pp.~783-784]{daners2006faber}), but we were unable to find the corresponding literature in the case $\hi,\ho < 0$. 
\end{enumerate}

	\bigskip
	\noindent
	\textbf{Acknowledgments.}
        A significant part of this work was carried out during a series of visits of V.B.\ and M.G.\ to IIT Madras  supported by the Office of Global Engagement of the IIT Madras.
        T.V.A.~also acknowledges the Core Research Grant  (CRG/2023/005344)  by ANRF.  
        V.B.~was supported in the framework of the development program of the Scientific Educational Mathematical Center of the Volga Federal District (agreement No.\ 075-02-2025-1637). 
        M.G.~is supported by TIFR Centre for Applicable Mathematics (TIFR-CAM).

\bibliographystyle{abbrvurl}
\bibliography{Reference}

\end{document}